\documentclass[12pt, reqno]{amsart}
\usepackage{amssymb, hyperref}
\usepackage{amsthm}
\usepackage{amsmath}
\usepackage{graphicx}
\usepackage[all,2cell]{xy}
\usepackage{verbatim}
\usepackage{tikz}
\usepackage{tikz-cd}
\usepackage{hyperref}
\usepackage{array}
\usepackage[capitalise]{cleveref}
\usepackage{epsfig,graphicx}
\usepackage{tikz}
\usepackage{tikz-cd}
\tikzset{node distance=2cm, auto}
\usepackage[vcentermath]{youngtab}
\usepackage{ytableau}
\usepackage{hyperref}
\usetikzlibrary{graphs}
\usetikzlibrary{backgrounds}
%\usetikzlibrary{decorations.markings}
%\tikzstyle{vertex}=[circle, draw, inner sep=0pt, minimum size=6pt]
%\usepackage{geometry}

\usepackage{multicol}
\usetikzlibrary{arrows,decorations.markings}
\tikzstyle{vertex}=[circle, draw, inner sep=0pt, minimum size=6pt]
\newcommand{\vertex}{\node[vertex]}
\usepackage{setspace}
\usetikzlibrary{matrix,arrows,decorations.pathmorphing}
\setlength{\textheight}{22cm}
\setlength{\textwidth}{16cm}
\setlength{\oddsidemargin}{0cm}
\setlength{\evensidemargin}{0cm}
\setlength{\topmargin}{0cm}
\usepackage{mathrsfs}
\numberwithin{equation}{section}
\parskip 0.2cm
\newtheorem*{theorem*}{Theorem}
\newtheorem*{corollary*}{\bf Corollary}
\newtheorem*{remark*}{\bf Remark}
\usepackage{lipsum}
%\usetikzlibrary{matrix,arrows,decorations.pathmorphing}
%\sloppy
\usepackage{lineno}
\baselineskip 1cm
\newtheorem{theorem}{Theorem}[section]
\newtheorem{corollary}[theorem]{Corollary}

\newtheorem{lemma}[theorem]{Lemma}

\title[Torus quotient of Richardson varieties]
{Torus quotient of Richardson varieties in Orthogonal and Symplectic Grassmannians}
\newtheorem{proposition}[theorem]{Proposition}
\newtheorem{remark}[theorem]{Remark}
%\usepackage{csquotes}
%\usepackage{epigraph}
% Title Page

\title[Projective normality of torus quotients of flag varieties]
{Projective normality of torus quotients of flag varieties}
 \author{Arpita Nayek}
\address{%
Arpita Nayek\\
Department of Mathematics and Statistics \\
Indian Institute of Technology, Kanpur\\
Kanpur-208016 \\
India\\
Email: anayek@iitk.ac.in\\
}
\author{S.K. Pattanayak}
\address{%
S.K. Pattanayak\\
Department of Mathematics and Statistic\\
Indian Institute of Technology, Kanpur\\
Kanpur-208016\\
India\\
Email:santosha@iitk.ac.in\\
}
\author{Shivang Jindal}
\address{%
Shivang Jindal\\
Department of Mathematics and Statistic\\
Indian Institute of Technology, Kanpur\\
Kanpur-208016\\
India\\
Email:keshushivang@gmail.com\\
}

\subjclass[2010]{05E18; 05E10; 14F15; 20G05}   

\begin{document}
\maketitle
\begin{abstract}
Let $G=SL_n(\mathbb C)$ and $T$ be a maximal torus in $G$. We show that the  quotient $T \backslash \backslash G/{P_{\alpha_1}\cap P_{\alpha_2}}$ is projectively normal with respect to the descent of a suitable line bundle, where $P_{\alpha_i}$ is the maximal parabolic subgroup in $G$ associated to the simple root $\alpha_i$, $i=1,2$.  We give a degree bound of the generators of the homogeneous coordinate ring of $T \backslash \backslash (G_{3,6})^{ss}_T(\mathcal{L}_{2\varpi_3})$. If $G =Spin_7$, we give a degree bound of the generators of the homogeneous coordinate ring of $T \backslash \backslash (G/P_{\alpha_2})^{ss}_T(\mathcal{L}_{2\varpi_2})$ whereas we prove that the quotient $T\backslash\backslash (G/P_{\alpha_3})^{ss}_T(\mathcal{L}_{4\varpi_3})$ is projectively normal with respect to the descent of the line bundles $\mathcal{L}_{4\varpi_3}$. 
\end{abstract}

{\bf Keywords:} Projective normality, Grassmannian, Semi-stable point, Line bundle.

\section{Introduction}\label{s.Introduction}

For the action of a maximal torus $T$ on the Grassmannian $G_{r,n}$ the quotients $T \backslash\backslash G_{r,n}$ have been studied extensively. Allen Knutson called them weight varieties in his thesis \cite{Knu}. In \cite{HK} Hausmann and Knutson identified the GIT quotient of the Grassmannian $G_{2,n}$ by the natural action of  the maximal torus with the moduli space of polygons in $\mathbb R^3$ and this GIT quotient can also be realized as  the GIT quotient of an $n$-fold product of projective lines by the diagonal action of $PSL(2, \mathbb C)$. In the symplectic geometry literature these spaces are known as polygon spaces as they parameterize the $n$-sides polygons in $\mathbb R^3$ with fixed edge length up to rotation. More generally, $T \backslash\backslash G_{r,n}$ can be identified with the GIT quotient of $(\mathbb P^{r-1})^{n}$ by the diagonal action of $PSL(r, \mathbb C)$ via the Gelfand-MacPherson correspondence. In \cite{Kap1} and \cite{Kap2} Kapranov studied the Chow quotient of the Gassmannians and he showed that the Grothendieck-Knudsen moduli space $\overline{M}_{0,n}$ of stable $n$-pointed curves of genus zero arises as the Chow quotient of the maximal torus action on the Grassmannian $G_{2,n}$.

In \cite{D} Dabrowski has proved that for any parabolic subgroup $P$ of $G$, the Zariski closure of a generic $T$-orbit in $G/P$ is normal.  For a precise statement, see \cite[Theorem 3.2, pg.~327]{D}. In \cite{CK} Carrell and Kurth proved that if $G$ is of type $A_n$, $D_4$ or $B_2$ and $P$ is any maximal parabolic subgroup of $G$, then every $T$ orbit closure in $G/P$ is normal. In the context of a problem on projective normality for torus actions, Howard proved that for any parabolic subgroup $P$ of $SL_n(\mathbb C)$, the Zariski closure $\overline{T.x}$ of the $T$-orbit of any point $x$ in $SL_n(\mathbb C)/P$ is projectively normal for the choice of any ample line bundle $\mathcal L$ on $SL_n(\mathbb C)/P$. For a precise statement, see \cite[Theorem 5.4, pg.~540]{How}.

In \cite{KPS}  the authors consider the quotients of a projective space $X$ for the linear action of finite solvable groups and for finite groups acting by pseudo reflections. They prove that the descent of $\mathcal O_X(1)^{|G|}$ is projectively normal. In \cite{CHK} these results were obtained for every finite group but with a larger power of the descent of $\mathcal O_X(1)^{|G|}$. In \cite{K1} there was an attempt to study the projective normality of $T\backslash\backslash(G_{2,n})$ ($n$ odd) with respect to the descent of the line bundle corresponding to the fundamental weight $\omega_2$. There it was proved that the homogeneous coordinate ring of $T\backslash\backslash(G_{2,n})$ is a finite module over the subring generated by the degree one elements. In \cite{sarjick} and \cite{HMSV} the authors show that the quotient $T \backslash \backslash G_{2,n}$ is projectively normal with respect to the descent of the line bundle corresponding to $n\varpi_2$. 

In this paper we give a short proof of the projective normality of the quotient $T \backslash \backslash G_{2,n}$ with respect to the descent of the line bundle $\mathcal{L}_{n\varpi_2}$ using Standard Monomial Theory and some graph theoretic techniques. We also prove that the quotient $T \backslash \backslash G/{P_{\alpha_1}\cap P_{\alpha_2}}$ is projectively normal with respect to the descent of a suitable line bundle, where $P_{\alpha_i}$ is the maximal parabolic subgroup in $G$ associated to the simple root $\alpha_i$, $i=1,2$, which is the main ingredient of this paper. We give a degree bound of the generators of the homogeneous coordinate ring of $T \backslash \backslash (G_{3,6})^{ss}_T(\mathcal{L}_{2\varpi_3})$. If $G = Spin_7$, we give a degree bound of the generators of the homogeneous coordinate ring of  $T \backslash \backslash (G/P_{\alpha_2})^{ss}_T(\mathcal{L}_{2\varpi_2})$ whereas we prove that $T\backslash\backslash (G/P_{\alpha_3})^{ss}_T(\mathcal{L}_{4\varpi_3})$ is projectively normal with respect to the descent of the line bundles $\mathcal{L}_{4\varpi_3}$. 

The layout of the paper is as follows. \cref{s.Preliminaries} consists of preliminary definitions and notation. In \cref{smt} we recall some preliminaries of Standard Monomial Theory and in \cref{graphtheory} we recall some preliminaries of graph theory. In \cref{isomorphism} we show that the GIT quotients $T \backslash\backslash(G_{r,n})$ and $T \backslash\backslash(G_{n-r,n})$ are isomorphic. In \cref{grassmann} we give a proof of the projective normality of the quotient $T \backslash \backslash (G_{2,n})^{ss}_T(\mathcal{L}_{n\varpi_2})$ with respect to the descent of the line bundle $\mathcal{L}_{n\varpi_2}$ and we give a degree bound of the generators of the homogeneous coordinate ring of $T \backslash \backslash (G_{3,6})^{ss}_T(\mathcal{L}_{2\varpi_3})$. In \cref{partialflag}  for $G=SL_n$ we prove projective normality of the quotient $T \backslash \backslash G/{P_{\alpha_1}\cap P_{\alpha_2}}$ with respect to the descent of a suitable line bundle and in \cref{secb3} for $G=Spin_7$, we give a degree bound of the generators of the homogeneous coordinate ring of  $T \backslash \backslash (G/P_{\alpha_2})^{ss}_T(\mathcal{L}_{2\varpi_2})$ and we prove that $T\backslash\backslash (G/P_{\alpha_3})^{ss}_T(\mathcal{L}_{4\varpi_3})$ is projectively normal with respect to the descent of the line bundle $\mathcal{L}_{4\varpi_3}$.

 \section{Preliminaries}\label{s.Preliminaries}

 In this section we set up some preliminaries and notation. We refer to \cite{Hum1}, \cite{Hum2}, \cite{spr} for preliminaries in Lie algebras and algebraic groups. Let $G$ be a semi-simple algebraic group over $\mathbb C$. We fix a maximal torus $T$ of $G$ and a Borel subgroup $B$ of $G$ containing $T$.  Let $N_G(T)$ be the normaliser of $T$ in $G$. Let $W = N_G(T)/T$ be the Weyl group of $G$ with respect to $T$. Let $R$ denotes the set of roots with respect to $T$. Let $S = \{\alpha_1,\alpha_2,\ldots,\alpha_n\} \subset R$ be the set of simple roots and for a subset $I\subseteq S$ we denote by $P_I$  the parabolic subgroup of $G$ generated by $B$ and $\{n_{\alpha}: \alpha \in I^{c}\}$, where $n_{\alpha}$ is a representative of $s_{\alpha}$ in $N_{G}(T)$. Let $X(T)$ (resp. $Y(T)$) denote the set of characters of $T$ (resp. one parameter subgroups of $T$). Let $E_1 := X(T) \otimes \mathbb{R}$,  $E_2 := Y(T) \otimes \mathbb{R}$. Let $\langle.,.\rangle: E_1 \times E_2 \to \mathbb{R}$ be the canonical non-degenerate bi-linear form. For all homomorphism $\phi_\alpha : SL_2 \to G$, $(\alpha \in R)$, we have $\check{\alpha}: \mathbb{G}_m \to G$ defined by 
	  
	   \[\check{\alpha}(t)=\phi_{\alpha} ( \left[ {\begin{array}{cc}
   t & 0\\
   0 & t^{-1} \\
  \end{array} } \right]).\] We also have $s_{\alpha}(\chi) = \chi - \langle\chi,\check{\alpha}\rangle \alpha$ for all $\alpha \in R$ and $\chi \in E_1$. Set $s_i = s_{\alpha_i}$ for all $i = 1, 2, \ldots,n$. Let $\{\varpi_i: i=1, 2, \ldots, n\} \in E_1$ be the fundamental weights; i.e.   
           $\langle\varpi_i,\check{\alpha_j}\rangle = \delta_{ij}$ for all $i,j = 1,2,\ldots n.$

 For a simply connected semi-simple algebraic group $G$ and for a parabolic subgroup $P$, the quotient space $G/P$ is a homogenous space for the left action of $G$. The quotient $G/P$ is called a generalized flag variety. When $G=SL_n(\mathbb C)$ and $P_r$ is the maximal parabolic subgroup corresponding to the simple root $\alpha_r$, the quotient can be identified with $G_{r,n}$, the Grassmannian of $r$ dimensional subspaces of $\mathbb C^n$. 

Now we recall the definition of projective normality of a projective variety. A projective variety $X \subset \mathbb{P}^n$ is said to be projectively normal if the affine cone $\hat{X}$ over $X$ is normal at its vertex. For a reference, see exercise 3.18, page 23 of \cite{RH}. For the practical purpose we need the following fact about projective normality of a polarized variety. 

%\textcolor{red}{Projective normality of a projective variety depends on the particular projective embedding in a projective space.}

A polarized variety $(X, \mathcal L)$ where $\mathcal L$ is a very ample line 
bundle is said to be projectively normal if its homogeneous coordinate ring 
$\oplus_{n \in \mathbb Z_{\geq 0}}H^0(X, \mathcal L^{\otimes n})$ is integrally 
closed and it is generated as a $\mathbb{C}$-algebra by $H^0(X, \mathcal L)$ (see Exercise 5.14, Chapter II of \cite{RH}). Projective normality depends on the particular projective embedding of the variety.

\textbf{Example:} The projective line $\mathbb{P}^1$ is obviously projectively normal since its cone is the affine plane $\mathbb C^2$ (which is non-singular). However it can also be embedded in $\mathbb{P}^3$ as the quartic curve, namely,\\
\centerline{$V_{+}=\{(a^4,a^3b,ab^3,b^4) \in \mathbb{P}^3 ~|~ (a,b) \in \mathbb{P}^1\}$,}\\ 
then it is normal but not projectively normal (see \cite{RH}, Chapter 1. Ex. 3.18).

Let $X$ be a projective variety which is acted upon by a reductive group $G$. Let $\mathcal L$ be a $G$-linearized very ample line bundle on $X$. The GIT quotient $X//G$ is by definition the uniform categorical quotient of the (open) set of semistable points $X^{ss}_G(\mathcal{L})$ by $G$. We denote the GIT quotient of $X$ by $G$ with respect to $\mathcal{L}$ by $X^{ss}_G(\mathcal{L})//G$. Assume that the line bundle $\mathcal L$ descends to the quotient $X^{ss}_G(\mathcal{L})//G$ and denote the descent by $\mathcal L'$. Then the polarized variety ($X^{ss}_G(\mathcal{L})//G, \mathcal L'$) is $Proj(\oplus_{n \in \mathbb{Z}_\geq0}(H^0(X,\mathcal{L}^{\otimes n})^G)$. For preliminaries in Geometric Invariant theory we refer to \cite{MFK} and \cite{PN}.

%For standard monomial theory on the Grassmannian $G_{r,n}$ we refer to \cite{LB}.

%\begin{theorem}
%The standard monomials of degree $m$ form a basis of the vector space of degree $m$ homogeneous polynomials in the homogeneous coordinate ring of $G_{r,n}$. 
%\end{theorem}

 Let $G$ be a simple, simply-connected algebraic group of type $A$ or $B$. Let $T$ be a maximal torus in $G$ and $Q$ be the root lattice of $G$. Let $\lambda$ be a dominant weight of $G$ and $P_{\lambda}$ be the parabolic subgroup of $G$ associated to $\lambda$. Let $\mathcal{L}_{\lambda}$ be the homogeneous ample line bundle on $G/P_{\lambda}$ associated to $\lambda$. Then the following theorem describes which line bundles descend to the GIT  quotient $T \backslash\backslash (G/P_{\lambda})^{ss}_T(\mathcal{L}_\lambda)$ (see \cite[Theorem 3.10]{KS}). 
 
 \begin{theorem}\label{shrawan} With all the notations as above, the line bundle $\mathcal{L}_{\lambda}$ descends to a line bundle on the GIT quotient $T \backslash\backslash (G/P_{\lambda})^{ss}_T(\mathcal{L}_\lambda)$ if and only if $\lambda$ is of the following form depending upon the type $G$:
 
   1. $G$ of type $A_n (n \geq 1)$: $\lambda \in Q$,
   
   2. $G$ of type $B_2$: $\lambda \in \mathbb{Z}\alpha_1+2\mathbb{Z}\alpha_2$,
    
   3. $G$ of type $B_n(n \geq 3)$: $\lambda \in 2Q$.
   
    \end{theorem}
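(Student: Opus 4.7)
\emph{Proof sketch.} My plan is to invoke Kempf's descent lemma, which for a $T$-linearized line bundle $\mathcal{L}_\lambda$ on the semistable locus $(G/P_\lambda)^{ss}_T(\mathcal{L}_\lambda)$ asserts that $\mathcal{L}_\lambda$ descends to the GIT quotient if and only if, for every point $x$ with closed $T$-orbit in the semistable locus, the isotropy subgroup $T_x$ acts trivially on the fibre $(\mathcal{L}_\lambda)_x$. Since the character by which $T_x$ acts on that fibre is read off from $\lambda$ (via an appropriate Weyl translate of the base point $P_\lambda/P_\lambda$), the descent problem reduces to a lattice condition: compute the finite subgroup $H \subseteq T$ generated by the stabilizers over all closed semistable orbits, and require $\lambda|_H$ to be trivial.

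First, I would locate the relevant closed semistable $T$-orbits. Every semistable $x$ with closed orbit specialises under a suitable one-parameter subgroup to some Weyl translate $wP_\lambda/P_\lambda$; at such a limit the stabilizer admits an explicit description as the common kernel of a collection of $T$-weights determined by the Weyl-orbit of $\lambda$ together with the root datum. This step yields a description of $H$ in purely root-theoretic terms, independent of the particular weight $\lambda$.

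Second, I would translate the triviality of $\lambda|_H$ into the stated lattice condition case by case. For type $A_n$, the simply-laced structure forces $H$ to collapse to the centre $Z(G) = \mu_{n+1}$; since the kernel of the natural map $X(T) \twoheadrightarrow \mathrm{Hom}(Z(G), \mathbb{C}^*)$ is precisely the root lattice $Q$, descent is equivalent to $\lambda \in Q$. For type $B_n$ with $n \geq 3$, the centre gives the same condition $\lambda \in Q$, but the short simple coroot $\check{\alpha}_n$ contributes an extra order-two element to $H$ from a second family of closed semistable orbits, upgrading the condition to $\lambda \in 2Q$. For $B_2$, only an asymmetric version of this extra constraint survives (in the short direction $\alpha_2$), producing the mixed lattice $\mathbb{Z}\alpha_1 + 2\mathbb{Z}\alpha_2$.

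\textbf{Main obstacle.} The technical heart of the argument is the precise identification of $H$: one must exhibit closed semistable orbits whose stabilizers together generate exactly the claimed subgroup, neither less nor more. This is routine in type $A$, but in the non-simply-laced cases $B_2$ and $B_n$ the short-root direction introduces additional torsion elements into $T$ whose presence in $H$ requires a careful Hilbert--Mumford analysis of limits along short-coroot one-parameter subgroups. The resulting asymmetry between short and long roots is exactly what produces the factor of $2$ in the stated lattice conditions.
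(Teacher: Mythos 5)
The paper does not prove this statement: it is imported verbatim from Kumar \cite[Theorem 3.10]{KS}, so there is no internal proof to compare against. Your overall strategy --- Kempf's descent lemma, reducing descent of $\mathcal{L}_\lambda$ to the triviality of $\lambda$ on the finite subgroup $H\subseteq T$ generated by the stabilizers of semistable points with closed $T$-orbit --- is in fact the strategy of Kumar's proof, so the plan is sound.

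As a proof, however, your proposal has a genuine gap and one concrete error. The gap is the one you flag yourself: you never exhibit the closed semistable orbits or compute their stabilizers, and that computation \emph{is} the theorem; everything preceding it is formal. The error is in your description of $H$ for type $B_n$, $n \geq 3$. You claim $H$ is the centre $\mu_2$ enlarged by a single extra element of order two coming from the short coroot. But for a finite subgroup $H$ of the torus, the restriction map $X(T)\to \mathrm{Hom}(H,\mathbb{C}^*)$ is surjective with kernel of index exactly $|H|$, while the sublattice $2Q$ has index $[P:Q]\cdot[Q:2Q]=2\cdot 2^{n}=2^{n+1}$ in $X(T)=P$. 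Hence $H$ must have order $2^{n+1}\geq 16$, not $4$: the stabilizers have to contribute a full rank's worth of $2$-torsion, not one extra involution, and locating all of these elements is precisely the Hilbert--Mumford analysis you defer as the ``main obstacle.'' (Your count does happen to be consistent for $B_2$, where $[P:\mathbb{Z}\alpha_1+2\mathbb{Z}\alpha_2]=4$, and for $A_n$, where $H=Z(G)=\mu_{n+1}$ and $Q$ is the kernel of $X(T)\to \mathrm{Hom}(Z(G),\mathbb{C}^*)$.) To repair the sketch you would either need to carry out the stabilizer computation in full, or simply cite \cite{KS} as the paper does.
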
 
    
    \section{Some preliminaries of Standard Monomial Theory}\label{smt}
    
    Let $\{e_1, e_2, \ldots, e_n\}$ be the standard basis of $\mathbb{C}^n$. Let $I_{r,n} = \{(i_1,i_2,\ldots,i_r)| 1 \leq i_1 < \cdots < i_r \leq n\}$. The set $\{e_{i_1}\wedge e_{i_2} \wedge \ldots \wedge e_{i_r}| (i_1,i_2, \ldots, i_r) \in I_{r,n})\}$ form a basis of $\wedge^r \mathbb{C}^n$. We denote by $\{p_{i_1,i_2,\ldots,i_r}\}$ the dual basis of the basis $\{e_{i_1}\wedge e_{i_2} \wedge \ldots \wedge e_{i_r}\}$; the $p_{i_1,i_2,\ldots,i_r}$ are called the {\em Pl\"{u}cker coodinates} of $\mathbb{P}(\wedge^r \mathbb{C}^n)$.
    
    The Grassmannian $G_{r,n} \subseteq \mathbb{P}(\wedge^r \mathbb{C}^n)$ is precisely the zero set of the following well known Pl\"{u}cker relations:
    
     $$\sum_{h=1}^{r+1}(-1)^h p_{i_1,i_2,\ldots,i_{r-1}j_h}p_{j_1,\ldots,\hat{j_h},\ldots,j_{r+1}},$$ 
     
     where $\{i_1,\ldots, i_{r-1}\}$ and $\{j_1,\ldots,j_{r+1}\}$ are two subsets of $\{1,2,\ldots,n\}$.
    
    \subsection{$SL_n$-standard Young tableau}
    
    In this subsection we recall some basic facts about standard Young tableau for generalized flag varieties (see \cite[pg.~216]{LB}).
    
     Let $G = SL_n$ and $\lambda=\Sigma_{i=1}^{n-1} a_i\varpi_i$, $a_i \in \mathbb{Z}^{+}$ be a dominant weight. To $\lambda$ we associate a Young diagram (denoted by $\Gamma$) with $\lambda_i$ number of boxes in the $i$-th column, where $\lambda_i:=a_i+\ldots+a_{n-1}$, $1 \leq i \leq n-1$.
     
     	A Young diagram $\Gamma$ associated to a dominant weight $\lambda$ is said to be a Young tableau if the diagram is filled with integers $1, 2, \ldots, n$. We also denote this Young tableau by $\Gamma$. A Young tableau is said to be standard if the entries along any column is non-decreasing and along any row is strictly increasing. 
     	
     	Given a Young tableau $\Gamma$, let $\tau=\{i_1,i_2,\ldots,i_d\}$ be a typical row in $\Gamma$, where $1 \leq i_1 < \cdots < i_d \leq n$, for some $1 \leq d \leq n-1$. To the row $\tau$, we associate the Pl\"{u}cker coordinate $p_{i_1,i_2, \ldots,i_d}$. We set $p_{\Gamma}=\prod_{\tau}p_{\tau}$, where the product is taken over all the rows of $\Gamma$. We say that $p_{\Gamma}$ is a standard monomial on $G/P_{\lambda}$ if $\Gamma$ is standard, where $P_{\lambda}$ is the parabolic subgroup of $G$ associated to the weight $\lambda$.

Now we recall the definition of weight of a standard Young tableau $\Gamma$ (see \cite[Section 2]{LP}). For a positive integer $1\leq i\leq n$, we denote by $c_{\Gamma}(i)$,
the number of boxes of $\Gamma$ containing the integer $i$. Let $\epsilon_i: T\to \mathbb G_m$  
be the character defined as $\epsilon_i(diag(t_1,\ldots, t_n))=t_i$. 
%Note that the $i^{th}$ fundamental weight $\omega_i=\epsilon_1+ \cdots +\epsilon_i$.
 We define the weight of $\Gamma$ as 
$$wt(\Gamma):=c_{\Gamma}(1)\epsilon_1+ \cdots + c_{\Gamma}(n)\epsilon_n.$$

 We have the following lemma about $T$-invariant monomials in $H^0(G/P_{\lambda}, \mathcal L_{\lambda})$.
  
\begin{lemma}\label{6.2}
 A monomial $p_{\Gamma}\in H^0(G/P_{\lambda}, \mathcal L_{\lambda})$ is $T$-invariant if and only if  all the entries in $\Gamma$ appear equal number of times. 
\end{lemma}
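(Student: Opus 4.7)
The plan is to interpret the statement as a computation of the $T$-weight of the monomial $p_\Gamma$ and then use the fact that, on the maximal torus of $SL_n$, the characters $\epsilon_1,\ldots,\epsilon_n$ satisfy the single relation $\epsilon_1+\cdots+\epsilon_n=0$.

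First I would recall that each Pl\"ucker coordinate $p_{i_1,\ldots,i_d}$ is a $T$-weight vector of weight $\epsilon_{i_1}+\cdots+\epsilon_{i_d}$. Indeed, $p_{i_1,\ldots,i_d}$ is the dual basis vector to $e_{i_1}\wedge\cdots\wedge e_{i_d}$, and the action of $t=\mathrm{diag}(t_1,\ldots,t_n)\in T$ on this wedge product is multiplication by $t_{i_1}\cdots t_{i_d}$, so it acts on the dual vector by $(t_{i_1}\cdots t_{i_d})^{-1}$ up to the usual convention; either way the monomial $p_\Gamma=\prod_\tau p_\tau$ taken over the rows $\tau$ of $\Gamma$ is a $T$-weight vector, and summing the contributions row by row gives exactly
\[
wt(p_\Gamma)=\sum_{i=1}^n c_\Gamma(i)\,\epsilon_i,
\]
which is the definition of $wt(\Gamma)$ given in the text.

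Next I would use that $p_\Gamma$ is $T$-invariant if and only if $wt(\Gamma)=0$ as a character of $T$. Since $T\subset SL_n$ consists of diagonal matrices of determinant one, the character $\epsilon_1+\cdots+\epsilon_n$ restricts to the trivial character, and in fact this is the unique relation among the $\epsilon_i$ inside $X(T)$. Hence
\[
\sum_{i=1}^n c_\Gamma(i)\,\epsilon_i=0\ \text{in}\ X(T)
\quad\Longleftrightarrow\quad
c_\Gamma(1)=c_\Gamma(2)=\cdots=c_\Gamma(n),
\]
which is exactly the condition that every integer $1,2,\ldots,n$ appears the same number of times in $\Gamma$. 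This completes both directions of the equivalence.

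There is no genuine obstacle here; the only point that needs a brief justification is the identification of $X(T)$ as the quotient $\bigoplus_{i=1}^n\mathbb{Z}\epsilon_i\big/\mathbb{Z}(\epsilon_1+\cdots+\epsilon_n)$ for $SL_n$, so that a $\mathbb{Z}$-linear combination of the $\epsilon_i$ vanishes on $T$ precisely when its coefficients are all equal. The rest is the bookkeeping identifying the weight of $p_\Gamma$ with $wt(\Gamma)$ as defined from the column-filling of $\Gamma$.
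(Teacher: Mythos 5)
Your proposal is correct and follows essentially the same route as the paper: compute the $T$-weight of $p_\Gamma$ as $\pm\sum_i c_\Gamma(i)\epsilon_i$, observe that $T$-invariance means this weight vanishes, and use the single relation $\epsilon_1+\cdots+\epsilon_n=0$ in $X(T)$ for $SL_n$ to conclude all the $c_\Gamma(i)$ must be equal. Your remark that one should justify that this is the \emph{only} relation among the $\epsilon_i$ is a point the paper leaves implicit, but otherwise the two arguments coincide.
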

\begin{proof}
 Recall that the action of $T$ on $H^0(G/P_{\lambda}, \mathcal L_{\lambda})$ is given by $$(t_1, \ldots, t_n)\cdot p_{i_1,i_2,\ldots, i_r}=(t_{i_1}\cdots t_{i_r})^{-1}p_{i_1,i_2, \ldots, i_r}.$$
Since $p_{i_1,\ldots, i_r}$ is the dual of $e_{i_1}\wedge \cdots \wedge e_{i_r}$, the weight of $p_{i_1,\ldots, i_r}$ is $-(\epsilon_{i_1}+\cdots +\epsilon_{i_r})$.
Thus the weight of $p_{\Gamma}$ is $-wt(\Gamma)$.
Therefore, we see that $p_{\Gamma}$ is $T$ invariant if and only if the weight of $\Gamma$ is zero.
Since the weight of $\Gamma$ is $\sum_{i=1}^{n}c_{\Gamma}(i)\epsilon_i$ and $\sum_{i=1}^{n}\epsilon_i=0$, we conclude that $p_{\Gamma}$ is $T$-invariant if and only if 
$c_{\Gamma}(i)=c_{\Gamma}(j)$ for all $1\leq i, j \leq n$. This proves the lemma.
\end{proof}
    
    \subsection{$Spin_{2n+1}$-standard Young tableau}\label{young}

In this subsection we recall some basic facts about standard Young tableau for $G$, where $G=Spin_{2n+1}$. (see the Appendix in \cite{LP}).     

 Let $\lambda=\Sigma_{i=1}^n a_i\varpi_i$, $a_i \in \mathbb{Z}^{+}$ be a dominant weight. Define $p_i=\Sigma_{j=i}^{n-1}2a_j+a_n$, for $1 \leq i \leq n$.

To $\lambda$ we associate a Young diagram (denoted by $\Gamma$) of shape $p(\lambda)=(p_1,p_2,\ldots)$ with $p_1 \geq p_2 \geq \ldots$ consists of $p_1$ boxes in the first column, $p_2$ in the second column etc.
  
 Let $r=(i_1,\ldots, i_t)$ be a row of length $ t \leq n$ with entries $i_j \leq 2n$. For $i= 1,\ldots, n$ denote by $s_i(r)$ the row defined as follows:
         
       If $i<n$ and $i+1$ and $2n+1-i$ are entries of the row $r$, then $s_i(r)$ is the row obtained from $r$ by replacing the entry $i + 1$ by $i$ and the entry $2n + 1 - i$ by $2n - i$. Else we set $s_i(r) := r$. If $i = n$ and $n + 1$ is an entry of the row $r$, then denote by $s_i(r)$ the row obtained from $r$ by replacing the entry $n + 1$ by $n$. Else we set $s_n(r):=r$.

 We say that a pair of rows $(r,r^\prime)$ are admissible if $r=r^\prime$ or there exists a sequence of different rows $(r_0, r_1, \ldots, r_l)$ such that $r_0=r$, $r_l=r^\prime$ and $s_{i_k}(r_{k-1})=r_k$ for $k=1, 2, \ldots, l$ for some integers $i_1, \ldots, i_l \in \{1, 2, \ldots n\}$.
            
            	A Young diagram $\Gamma$ of shape $p$ is said to be a Young tableau (also denoted by $\Gamma$) if the diagram is filled with positive integers such that
            
   \noindent    1. the entries are less than or equal to $2n$,\\ 
     2. $i$ and $2n+1-i$ do not occur in the same row, for all $1 \leq i \leq n$ and\\
       3. For all $i=1, \ldots, \bar{p_1}$, the pair of rows $(r_{2i-1},r_{2i})$ are admissible, where $\bar{p_1}=\frac{p_1-a_n}{2}$.
            
            The Young tableau is said to be standard if it is strictly increasing in the row and non-decreasing in column. If $i$ is a positive integer and $\Gamma$ is a given Young tableau then we denote by $c_\Gamma(i)$, the number of boxes of $\Gamma$ containing the integer $i$. We define the weight of the young tableau $\Gamma$ as
            
            $$wt(\Gamma):=\frac{1}{2}((c_\Gamma(1)-c_\Gamma(2n))\epsilon_1+\cdots+(c_\Gamma(n)-c_\Gamma(n+1))\epsilon_{n}).$$
            
            Let $P_{\lambda}$ be the parabolic subgroup of $G$ associated to $\lambda$. Then the $T$-eigenvectors of $H^{0}(G/P_{\lambda}, \mathcal L_{\lambda})$ are denoted by $p_{\Gamma}$ which are indexed by the Young tableau $\Gamma$ of shape $p(\lambda)$. We say that $p_{\Gamma}$ is a standard monomial if $\Gamma$ is standard.

              \begin{lemma}\label{zeroweight}
 A monomial $p_{\Gamma}\in H^0(G/P_{\lambda}, \mathcal L_{\lambda})$ is $T$-invariant if and only if $c_{\Gamma}(t)=c_{\Gamma}(2n+1-t)$, for all $ 1 \leq t \leq 2n$.
\end{lemma}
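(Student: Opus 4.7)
The plan is to mirror the proof of Lemma \ref{6.2}, with the role of the $SL_n$ weight formula replaced by its $Spin_{2n+1}$ analogue given just above the statement. First, I would observe that $p_\Gamma$ is a $T$-eigenvector of $H^0(G/P_\lambda, \mathcal L_\lambda)$ whose $T$-weight is (up to sign) equal to
\[
wt(\Gamma) = \frac{1}{2}\sum_{i=1}^{n} (c_\Gamma(i) - c_\Gamma(2n+1-i))\epsilon_i.
\]
Consequently $p_\Gamma$ is $T$-invariant if and only if $wt(\Gamma) = 0$ in $X(T) \otimes \mathbb R$.

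The next step is a purely linear-algebraic one. Unlike the type $A$ case of Lemma \ref{6.2}, where the relation $\sum_{i=1}^{n}\epsilon_i = 0$ forced us to only conclude equality of the $c_\Gamma(i)$, in type $B_n$ the characters $\epsilon_1, \ldots, \epsilon_n$ form a basis of $X(T) \otimes \mathbb R$. Therefore $wt(\Gamma) = 0$ is equivalent to the vanishing of every coefficient, i.e.
\[
c_\Gamma(i) = c_\Gamma(2n+1-i) \quad \text{for every } 1 \leq i \leq n.
\]
For indices $n+1 \leq t \leq 2n$, the condition $c_\Gamma(t) = c_\Gamma(2n+1-t)$ is simply the preceding condition re-indexed with $i = 2n+1-t \in \{1,\ldots,n\}$, so it holds automatically. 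The converse direction is immediate by plugging the equalities back into the weight formula.

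The main obstacle is justifying the first step: that the $T$-weight of the standard monomial $p_\Gamma$ is $-wt(\Gamma)$. This is more subtle than in the $SL_n$ case, because the $T$-eigenbasis of $H^0(G/P_\lambda, \mathcal L_\lambda)$ indexed by the $Spin_{2n+1}$-Young tableaux is not given by transparent Pl\"ucker-coordinate products but is built from the admissible-row combinatorics recalled in Subsection \ref{young}. I would resolve this by appealing to the construction in the Appendix of \cite{LP}, where one checks that a box filled with $i \in \{1,\ldots,n\}$ contributes weight $-\tfrac{1}{2}\epsilon_i$ and a box filled with $2n+1-i$ contributes $+\tfrac{1}{2}\epsilon_i$ to the $T$-weight of $p_\Gamma$; the factor $\tfrac{1}{2}$ reflects the presence of the spin fundamental weight $\varpi_n$. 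Summing over all boxes of $\Gamma$ produces exactly $-wt(\Gamma)$, after which the rest of the argument is straightforward.
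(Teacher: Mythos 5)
Your proposal is correct and follows essentially the same route as the paper's own proof: identify the $T$-weight of $p_\Gamma$ with $-wt(\Gamma)$, note that $T$-invariance is equivalent to $wt(\Gamma)=0$, and read off the equalities $c_\Gamma(t)=c_\Gamma(2n+1-t)$ from the weight formula. Your additional remarks (the linear independence of $\epsilon_1,\ldots,\epsilon_n$ in type $B_n$, the re-indexing for $t>n$, and the justification of the weight formula via \cite{LP}) merely make explicit steps the paper leaves implicit.
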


\begin{proof}
A monomial $p_{\Gamma}\in H^0(G/P_{\lambda}, \mathcal L_{\lambda})$ is $T$-invariant if and only if the weight of $\Gamma$ is zero. Recall that weight of a Young tableau $\Gamma$ is given by  $\frac{1}{2}\sum_{j=1}^n(c_{\Gamma}(j)-c_{\Gamma}(2n+1-j))\epsilon_j$. Thus, $p_{\Gamma}$ is $T$-invariant if and only if $c_{\Gamma}(t)=c_{\Gamma}(2n+1-t)$, for all $1 \leq t \leq 2n$.
\end{proof}

The main theorem of the Standard Monomial Theory for any classical group is the following (see \cite{lakshmibai-seshadri}, \cite{LP}): 
         
   \begin{theorem}
              Let $G$ be a simple, simply connected algebraic group and $P_{\lambda}$ be the parabolic subgroup of $G$ associated to a dominant weight $\lambda$. Then the standard monomials $p_{\Gamma}$ form a basis of $H^0(G/P_{\lambda}, \mathcal{L}_{\lambda}^{\otimes m})$ as a vector space, where $\Gamma$ is a standard Young tableau associated to the weight $m\lambda$.  
               \end{theorem}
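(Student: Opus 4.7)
The plan is to establish the two halves of a basis claim separately: that the standard monomials $p_\Gamma$ span $H^0(G/P_\lambda, \mathcal{L}_\lambda^{\otimes m})$, and that they are linearly independent. I would proceed by induction on the Schubert stratification of $G/P_\lambda$, proving the stronger statement that for each Schubert variety $X(w)\subset G/P_\lambda$, the restrictions of those $p_\Gamma$ whose associated initial element lies below $w$ form a basis of $H^0(X(w), \mathcal{L}_\lambda^{\otimes m})$, and the remaining $p_\Gamma$ restrict to zero. The theorem for $G/P_\lambda$ itself is the case $w=w_0^P$ (the maximal coset representative).

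For spanning, the key tool is a \emph{straightening law}. In type $A$, one uses the quadratic Pl\"ucker relations $\sum_{h}(-1)^h p_{i_1,\ldots,i_{r-1},j_h}\,p_{j_1,\ldots,\hat{j_h},\ldots,j_{r+1}}=0$ recalled in Section \ref{smt} to rewrite a non-standard product $p_{\tau}p_{\tau'}$ (with $\tau,\tau'$ incomparable in the Bruhat order on $I_{r,n}$) as a $\mathbb{C}$-linear combination of standard bitableaux that are lexicographically smaller in a suitable monomial order. Iterating this procedure terminates and produces an expression purely in standard monomials. In type $B_n$ one instead needs the analogous relations of Littelmann--Papi obtained from the admissibility moves $s_i(r)$ described in Subsection \ref{young}; the straightening step rewrites a pair of rows that violates admissibility as a combination of admissible pairs, with explicit sign conventions coming from the spin representation. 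This straightening part is the main obstacle, because one must verify both that the rewriting strictly decreases a terminating statistic and that each intermediate tableau still has the prescribed shape $p(m\lambda)$; in type $B$ the presence of the half-spin fundamental weight $\varpi_n$ (encoded in the parity condition $\bar p_1 = (p_1-a_n)/2$) makes the combinatorics delicate.

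For linear independence, the idea is to refine the induction on Schubert varieties using a vanishing--nonvanishing dichotomy. One shows that for a standard tableau $\Gamma$ with associated initial element $w_\Gamma$, the section $p_\Gamma$ vanishes on $X(v)$ whenever $v\not\geq w_\Gamma$, and its restriction to $X(w_\Gamma)$ does not vanish at the $T$-fixed point $e_{w_\Gamma}$. This yields a leading-term argument: a hypothetical linear dependence $\sum_\Gamma c_\Gamma p_\Gamma=0$ can be restricted to the Schubert variety corresponding to a minimal $w_\Gamma$ in the support, where all other terms vanish, forcing $c_\Gamma=0$ for that minimal index, and one iterates. Equivalently, since each $p_\Gamma$ is a $T$-weight vector of weight $-\mathrm{wt}(\Gamma)$ (by Lemmas \ref{6.2} and \ref{zeroweight}), the standard monomials of distinct weights are automatically independent, and within a fixed weight space one compares their number to $\dim H^0(X(w), \mathcal{L}_\lambda^{\otimes m})_{-\mu}$.

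Finally, to close the induction one matches the cardinality of standard tableaux of shape $p(m\lambda)$ bounded by $w$ against $\dim H^0(X(w),\mathcal{L}_\lambda^{\otimes m})$. Vanishing of higher cohomology on Schubert varieties (Kempf's theorem) reduces this dimension to a character computation, and for $w=w_0^P$ the Weyl dimension formula (resp.\ the Demazure character formula for general $w$) gives exactly the number of standard tableaux, by the combinatorial identities due to Hodge in type $A$ and to Littelmann--Papi in type $B$. This matches the two sides and completes the proof; for the full details in each classical type I would refer to \cite{lakshmibai-seshadri} and \cite{LP}.
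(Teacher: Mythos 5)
The paper does not actually prove this statement: it is recalled as the main theorem of Standard Monomial Theory with a citation to \cite{lakshmibai-seshadri} and \cite{LP}, so there is no in-paper argument to compare against. Your outline --- straightening via the Pl\"{u}cker/admissibility relations for spanning, induction over Schubert varieties with a leading-term vanishing argument for linear independence, and a Demazure/Weyl character count to match dimensions --- is exactly the strategy of those references and is correct as a roadmap, though it remains a sketch whose hardest steps (termination of the straightening algorithm in type $B$ and the equality of the tableau count with the Demazure character) are themselves deferred to the same sources.
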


\section{Some preliminaries of graph theory}\label{graphtheory}

    We follow \cite{AK} for the preliminary definitions in graph theory.
      
	    Let $\mathcal{G}$ be a graph which is represented by the pair $(V(\mathcal{G}),E(\mathcal{G}))$, where $V(\mathcal{G})$ denotes the set of vertices and $E(\mathcal{G})$ denotes the set of edges respectively. A graph having loops and multiple edges is called a \textbf{general graph}. A graph having no loops but having multiple edges is called a \textbf{multigraph}. A graph without loops and at most one edge between any two vertices is called a \textbf{simple graph}. \textbf{Degree} of a vertex in a graph is the number of edges connected to the vertex with loops counted twice. A graph $\mathcal{G}$ is called \textbf{$k$-regular} if each vertex of $V(\mathcal{G})$ is of degree $k$. 
%	    A graph that may have loops and multiple edges is called a general graph.
	    
	    A spanning subgraph of $\mathcal{G}$ is a subgraph of $\mathcal{G}$ which contains every vertex of $\mathcal{G}$. For a positive integer $k$, a \textbf{$k$-factor} of $\mathcal{G}$ is a spanning subgraph of $\mathcal{G}$ that is $k$-regular. A graph $\mathcal{G}$ is said to be \textbf{$k$-factorable} if it has a $k$-factor.
	    
	   A \textbf{walk} in a graph is defined as a sequence of alternating vertices and edges $v_0,e_1,v_1,e_2,\\ \ldots, v_{k-1},e_k,v_k$, where $e_i = (v_{i-1},v_i)$ is the edge between $v_{i-1}$ and $v_i$. The length of this walk is $k$. A walk that passes through every one of its vertices exactly once is called a \textbf{path}. Thus, by an even length path we mean $k$ is even and by an odd length path we mean $k$ is odd. A \textbf{cycle} is a closed path i.e. initial and terminal vertices of the path are same.
	    
	     Let $\mathcal{G}_1$ and $\mathcal{G}_2$ be two graphs where $V(\mathcal{G}_1)$ is same as $V(\mathcal{G}_2)$. Then we define $\mathcal{G}_1 \circ \mathcal{G}_2 = (V(\mathcal{G}_1), E(\mathcal{G}_1) \circ E(\mathcal{G}_2))$, where $E(\mathcal{G}_1) \circ E(\mathcal{G}_2) = \{e ~|~ e \in E(\mathcal{G}_1) \mbox{~or~} e \in E(\mathcal{G}_2)\}$ and $\mathcal{G}_1 \backslash \mathcal{G}_2=(V(\mathcal{G}_1), E(\mathcal{G}_1) \backslash E(\mathcal{G}_2))$, where $E(\mathcal{G}_1) \backslash E(\mathcal{G}_2) = \{e ~|~ e \in E(\mathcal{G}_1) \mbox{~and~} e \notin E(\mathcal{G}_2)\}$.
	    
	    We recall the following two results which will be used in the proof of the main theorem.
	     
	      \begin{theorem}[Petersen's $2$-factor theorem]\cite[Theorem $3.1$, pg.~70]{AK}\label{petersen} For every integer $r \geq 1$, every $2r$-regular general graph is $2$-factorable. More generally, for every integer $k$, $1 \leq k \leq r$, every $2r$-regular general graph has a $2k$-regular factor.
	      \end{theorem}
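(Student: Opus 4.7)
The plan is to use the standard Eulerian circuit / bipartite matching argument due to Petersen. The strategy is to reduce the problem to finding perfect matchings in an auxiliary regular bipartite graph, which can then be handled by K\"onig's theorem.

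First, I would reduce to the case where $\mathcal{G}$ is connected, since a $2k$-factor of a disjoint union is obtained by taking a $2k$-factor of each component. Because $\mathcal{G}$ is $2r$-regular, every vertex has even degree, so by Euler's theorem each component admits a closed Eulerian circuit $C$ traversing every edge exactly once. I would traverse $C$ and orient every edge in the direction of traversal. This gives an orientation of $\mathcal{G}$ in which every vertex has in-degree $r$ and out-degree $r$ (since each visit to a vertex uses one incoming and one outgoing edge, and each vertex is visited exactly $r$ times).

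Next, I would form the bipartite ``split'' graph $H$ with vertex set $V^+ \sqcup V^-$, where $V^+$ and $V^-$ are two copies of $V(\mathcal{G})$; for every oriented edge $u \to v$ of $\mathcal{G}$ I place an edge from $u^+ \in V^+$ to $v^- \in V^-$ in $H$. Because every vertex of $\mathcal{G}$ has out-degree and in-degree both equal to $r$, the multigraph $H$ is $r$-regular bipartite. By K\"onig's edge-coloring theorem (equivalently, iterated application of Hall's marriage theorem), every $r$-regular bipartite multigraph decomposes into $r$ edge-disjoint perfect matchings $M_1, \ldots, M_r$.

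Finally, given $1 \leq k \leq r$, I would set $M := M_1 \cup \cdots \cup M_k$, an $k$-regular spanning subgraph of $H$, and pull it back to $\mathcal{G}$: the edges of $M$ correspond to a set $F$ of oriented edges of $\mathcal{G}$ such that each vertex $v$ is the tail of exactly $k$ edges in $F$ (from the $V^+$-side) and the head of exactly $k$ edges in $F$ (from the $V^-$-side). Forgetting orientation, $F$ is a $2k$-regular spanning subgraph of $\mathcal{G}$, i.e.\ a $2k$-factor. The case $k=1$ yields the $2$-factorability assertion. The main subtlety I would need to be careful about is handling loops and multiple edges correctly: a loop at $v$ should contribute $2$ to the degree of $v$ and should be treated consistently when orienting the Euler circuit (traversing a loop produces both an outgoing and an incoming half-edge at $v$), so that the bipartite graph $H$ is well-defined as a multigraph. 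Once this bookkeeping is in place, the existence of the Euler circuit and K\"onig's theorem do the rest.
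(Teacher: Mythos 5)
Your argument is correct, but note that the paper does not actually prove this statement: it is quoted verbatim from Akiyama--Kano \cite[Theorem 3.1]{AK} and used as a black box, so there is no internal proof to compare against. What you have written is the classical proof of Petersen's theorem, and each step checks out: reduction to connected components; existence of an Eulerian circuit because all degrees $2r$ are even; orienting along the circuit to get in-degree $=$ out-degree $=r$ at every vertex; splitting into the $r$-regular bipartite multigraph on $V^+\sqcup V^-$; decomposing it into $r$ perfect matchings by K\"onig/Hall; and pulling back a union of $k$ matchings to a $2k$-factor. Your closing caveat about loops is exactly the right one to flag, and it is worth emphasizing in the context of this paper: the statement as quoted lives in the convention of \cite{AK}, where a loop contributes $2$ to the degree (a single vertex carrying $r$ loops is a legitimate $2r$-regular component, its Euler circuit just traverses the loops, and each loop becomes one edge $v^+v^-$ in the split graph), whereas the paper's own sections use the convention that a loop contributes $1$, with Remark \ref{rdeg} serving as the bridge between the two conventions before \cref{petersen} is invoked. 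As long as your Euler-circuit and splitting construction is carried out in the degree-$2$-per-loop convention, the bookkeeping is consistent and the proof is complete; your argument also has the mild advantage of delivering the full edge-disjoint decomposition into $r$ two-factors in one pass, rather than just the existence of a single $2k$-regular factor.
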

	      
	      \begin{theorem}\cite[Theorem $2.2$, pg.~18]{AK}\label{1factor}
	Every regular bipartite multigraph is $1$-factorable, in particular, it has a $1$-factor. 
	\end{theorem}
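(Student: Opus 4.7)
The plan is to prove this by induction on the regularity $k$, extracting a perfect matching (which is exactly a $1$-factor in the bipartite setting) at each stage using Hall's marriage theorem, and then iterating on the $(k-1)$-regular leftover graph.

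First I would set up the bipartition. Let $\mathcal{G}$ be a $k$-regular bipartite multigraph with parts $A$ and $B$. Double counting the edges by endpoint gives $k|A| = |E(\mathcal{G})| = k|B|$, so $|A| = |B|$; in particular, a perfect matching of $A$ into $B$ will saturate both sides and hence constitute a $1$-factor of $\mathcal{G}$. The core combinatorial step is to verify Hall's condition: for any $S \subseteq A$ with neighborhood $N(S) \subseteq B$, the $k|S|$ edges incident to $S$ all have their second endpoint in $N(S)$, while only $k|N(S)|$ edges are incident to $N(S)$ in total, forcing $|N(S)| \geq |S|$.

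Hall's theorem then yields a system of distinct representatives, i.e.\ a perfect matching $M \subseteq E(\mathcal{G})$ of $A$ into $B$. By the edge-count above, $M$ saturates $B$ as well, so $M$ is a $1$-regular spanning subgraph of $\mathcal{G}$, i.e.\ a $1$-factor. Removing the edges of $M$ produces $\mathcal{G} \setminus M$, which is again bipartite and is now $(k-1)$-regular since exactly one edge is removed at each vertex. By induction on $k$ (with the trivial base case $k=1$, where $\mathcal{G}$ is itself a $1$-factor), $\mathcal{G} \setminus M$ decomposes into $1$-factors, and adjoining $M$ gives the desired decomposition of $\mathcal{G}$ into $k$ edge-disjoint $1$-factors.

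The main obstacle I anticipate is ensuring that Hall's theorem is applied correctly in the multigraph setting: one must phrase it so that a perfect matching is chosen using actual edges of the multigraph (respecting multiplicities) rather than just the underlying simple graph. This is handled by working with the bipartite incidence structure directly, where parallel edges between $a \in A$ and $b \in B$ count as separate choices available to the matching, and the Hall condition above is stated in terms of the neighborhood set $N(S)$ independently of multiplicities, so the standard proof of Hall's theorem goes through verbatim. Once that is pinned down, the inductive decomposition is immediate and the "in particular" clause about existence of a $1$-factor follows from any single step of the induction.
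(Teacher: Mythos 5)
Your proof is correct. The paper itself gives no proof of this statement --- it is quoted verbatim from Akiyama--Kano \cite[Theorem 2.2]{AK} --- and your argument (verify Hall's condition by double-counting edges incident to $S$ versus $N(S)$, extract a perfect matching, delete it, and induct on the regularity $k$) is the standard proof found there; the handling of parallel edges is also done correctly, since Hall's condition depends only on the neighborhood sets.
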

	
%	\textcolor{red}{In Theorem 5.1, by a graph we mean a multigraph and in Section 6, by a graph we mean a general graph.}
	
	In Section 6., our definition of `degree of a vertex' differs from `degree of a vertex' in \cite{AK}. The difference is because of the number of degrees contributed by a loop - in our case, a loop is counted once, however in \cite{AK}, it is counted twice. Since we will be using the results of \cite{AK} directly, so we make the following remark. 
	
	\begin{remark}
\label{rdeg}
{\rm In \cite{AK}, a general graph means a graph with multiple edges and loops where one loop contributes degree $2$ to a vertex incident to it. In our case, one loop contributes degree $1$ to the vertex incident to it. Consider a graph $\mathcal{G}$ with the vertex set $\{v_i : 1 \leq i \leq n\}$, with degree defined as in our case. If $\mathcal{G}$ has even number of loops then the same number of loops at vertex $v_i$ and vertex $v_j$ can be paired up and joined together to get edges between $v_i$ and $v_j$. Doing this procedure will result in even number of loops remaining at a vertex. Now, any two loops at this vertex can be joined together to get a new loop. Now this loop contributes degree $2$ to the vertex. This will result in a graph in \cite{AK}, without changing the degree of any vertices.
For example,
 \[\begin{tikzpicture}[scale=3, every loop/.style={}]
 \vertex[fill] (1) at (0, 1)  [label=above: $v_1$] {}; 
 \vertex [fill](2) at (.5,1.5) [label=left: $v_2$] {};
\vertex[fill] (3) at (1,.3) [label=below: $v_3$]{};
 \vertex[fill] (4) at (.7,0) [label=right: $v_4$] {};   
   \vertex[fill] (5) at (1, 1) [label=above: $v_5$][label=right: 4 \hspace{2.2cm} \mbox{$=$}]  {};  
    \vertex [fill](6) at (1.5,.5) [label=left: $v_6$][label=below: 10] {};  
       \vertex[fill] (7) at (3, 1)  [label=above: $v_1$] {}; 
 \vertex [fill](8) at (3.5,1.5) [label=left: $v_2$] {};
\vertex[fill] (9) at (4,.3) [label=below: $v_3$]{};
 \vertex[fill] (10) at (3.7,0) [label=right: $v_4$] {};   
   \vertex[fill] (11) at (4, 1) [label=above: $v_5$]  {};  
    \vertex [fill](12) at (4.5,.5) [label=left: $v_6$][label=below: 3] {};     
\draw (1.5,.5)  to[in=-50,out=-130,loop] (1.5,.5);
 \draw (1,1)  to[in=30,out=-70,loop] (1,1);
 \draw (4.5,.5)[dashed]  to[in=-50,out=-130,loop] (4.5,.5);
% %\draw (1,1) -- (1.5,.5)[dashed];
   \path
(7) edge node [below] {2} (8)
       (1) edge node [below] {2} (2)
      (1) edge node [below] {3} (4)
       (2) edge node [right] {7} (4)
       (2) edge node [right] {1} (5)
       (1) edge node [below] {5} (3)
      (3) edge node [right] {5} (5)
       (7) edge node [below] {2} (8)
       (7) edge node [below] {5} (9)
       (7) edge node [below] {3} (10)
       (8) edge node [right] {7} (10)
       (8) edge node [right] {1} (11)
       (9) edge node [right] {5} (11)
       (11) edge[dashed] node [right] {4}(12);  
\end{tikzpicture}\]
\hspace{2cm}{Figure 1}
\hspace{7cm}{Figure 2}}

\vspace{0.2cm}

 Here the labels on the edges are their multiplicities. In Figure 1, there are four loops at the vertex $v_5$ and ten loops at the vertex $v_6$. Now we pair up four loops at vertex $v_5$ with four loops at vertex $v_6$ and this results in four edges between vertex $v_5$ and vertex $v_6$ (the dotted lines in Figure 2). After doing this the remaining number of loops at vertex $v_6$ is six. So we pair up two loops together to get a new loop and so this results in three loops at the vertex $v_6$ (the dotted loops in Figure 2), each of which contributes degree $v_2$ to the vertex $v_6$.
 
\end{remark}

 Note that the notion of a $k$-factor is preserved in the modification of the graph respect to the
different notions of degree.
	
	\section{Isomorphic Torus GIT quotients}\label{isomorphism}
Let $G=SL_n(\mathbb C)$ and $T$ be a maximal torus of $G$. Let $\mathcal L_{\omega_r}$ and $\mathcal L_{\omega_{n-r}}$ be the line bundles associated to the fundamental weights $\omega_r$ and $\omega_{n-r}$ respectively. The projective varieties $G_{r,n}$ and $G_{n-r,n}$ are isomorphic. In the following proposition we show that their torus quotients are also isomorphic. 
\begin{proposition}\label{propsan} The GIT quotients $T \backslash \backslash (G_{r,n}) (\mathcal L_{n\omega_r})$ and $T \backslash \backslash (G_{n-r,n}) (\mathcal L_{n\omega_{n-r}})$ are isomorphic. 
\end{proposition}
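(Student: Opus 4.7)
My plan is to realise the desired isomorphism geometrically via the classical identification $V\mapsto V^{\perp}$ between complementary Grassmannians, and then pass to $T$-invariants. Fix the non-degenerate bilinear form on $\mathbb{C}^{n}$ whose Gram matrix in the standard basis is the identity, and define
\[
\psi\colon G_{r,n}\longrightarrow G_{n-r,n},\qquad V\mapsto V^{\perp}.
\]
This is a well-known isomorphism of projective varieties, induced at the level of ambient projective spaces by the perfect pairing $\wedge^{r}\mathbb{C}^{n}\otimes\wedge^{n-r}\mathbb{C}^{n}\to\wedge^{n}\mathbb{C}^{n}\cong\mathbb{C}$. In Pl\"ucker coordinates one checks that $\psi^{\ast}(p_{j_{1},\ldots,j_{n-r}}) = \epsilon(I)\,p_{i_{1},\ldots,i_{r}}$, where $\{i_{1}<\cdots<i_{r}\}$ is the complement of $\{j_{1}<\cdots<j_{n-r}\}$ in $\{1,\ldots,n\}$ and $\epsilon(I)\in\{\pm 1\}$ is a sign arising from the pairing. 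In particular $\psi^{\ast}\mathcal{L}_{\omega_{n-r}}\cong\mathcal{L}_{\omega_{r}}$, and hence $\psi^{\ast}\mathcal{L}_{n\omega_{n-r}}\cong\mathcal{L}_{n\omega_{r}}$.

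Next I analyse how $\psi$ interacts with the $T$-action. From the formula recalled in \cref{smt}, the $T$-weight of $p_{I}$ on $G_{r,n}$ is $-\sum_{i\in I}\epsilon_{i}$, and similarly the weight of $p_{J}$ on $G_{n-r,n}$ is $-\sum_{j\in J}\epsilon_{j}$. Since $T\subset SL_{n}$, the relation $\sum_{i=1}^{n}\epsilon_{i}=0$ holds in $X(T)$, so the weight of $p_{I}$ on $G_{r,n}$ is precisely the negative of the weight of $p_{I^{c}}$ on $G_{n-r,n}$. A direct check at the level of points, using $\psi(t\cdot V)=t^{-1}\cdot\psi(V)$, confirms that $\psi$ is $T$-equivariant up to the inversion automorphism $\iota\colon T\to T$, $t\mapsto t^{-1}$.

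Finally I pass to invariants. Because $\iota$ is an automorphism of $T$, the subspace of $T$-invariants is the same for the direct action and for the $\iota$-twisted action. Hence $\psi^{\ast}$ restricts, for every $m\geq 0$, to a vector space isomorphism
\[
H^{0}\bigl(G_{n-r,n},\mathcal{L}_{n\omega_{n-r}}^{\otimes m}\bigr)^{T}\;\xrightarrow{\;\sim\;}\;H^{0}\bigl(G_{r,n},\mathcal{L}_{n\omega_{r}}^{\otimes m}\bigr)^{T},
\]
and these isomorphisms are compatible with multiplication because $\psi^{\ast}$ is an algebra map on Pl\"ucker coordinates. Summing over $m$ gives an isomorphism of graded rings, and passing to $\mathrm{Proj}$ yields the desired isomorphism of GIT quotients.

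The only delicate point is bookkeeping the signs $\epsilon(I)$: they affect the explicit comparison of line bundles but they are harmless on the $T$-invariant subring, so they pose no real obstruction. I therefore expect the argument to be essentially this short once the geometric map $\psi$ has been identified, with no need for a separate combinatorial or straightening argument.
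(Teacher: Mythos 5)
Your proof is correct, but it takes a genuinely different route from the paper. The paper realises $G_{r,n}$ and $G_{n-r,n}$ as the conjugacy classes $\mathcal P_r$ and $\mathcal P_{n-r}$ of the corresponding parabolics, applies the outer (diagram) automorphism $\phi$ of $SL_n$ to get an isomorphism $\mathcal P_r\to\mathcal P_{n-r}$ intertwining the actions of $T$ and $T'=\phi(T)$, verifies $T$-invariance of the composed quotient map via the equivalence-relation subvarieties $R_T$ and $R_{T'}$, and then needs an extra step identifying the $T'$-quotient with the $T$-quotient because $T'$ is a priori only conjugate to $T$. You instead use the concrete duality $V\mapsto V^{\perp}$, which in Pl\"ucker coordinates is $p_J\mapsto\pm p_{J^c}$, observe that it is equivariant for the inversion automorphism $t\mapsto t^{-1}$ of $T$ itself (valid since $t$ is self-adjoint for the symmetric form you fixed, so $(tV)^{\perp}=t^{-1}V^{\perp}$), and conclude directly at the level of the graded invariant rings, since twisting by a group automorphism does not change the invariant subspace. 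Your argument is shorter and more explicit, works entirely inside $\operatorname{Proj}$ of the section rings, and avoids the conjugation step altogether; the paper's argument is more structural and works at the level of categorical quotients of semistable loci, which would adapt to diagram automorphisms in other types. The two are of course secretly the same map, since the outer automorphism $g\mapsto (g^{T})^{-1}$ induces exactly the perp duality on Grassmannians, and the weight computation you give (weight of $p_I$ equals minus the weight of $p_{I^c}$, using $\sum_i\epsilon_i=0$) is the precise reason the invariant rings match up. No gap.
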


\begin{proof} Note that $n\varpi_r$ and $n \varpi_{n-r}$ are in the root lattice $Q$. So by \cite[Theorem 3.10]{KS} the line bundle $\mathcal L_{n\varpi_r}$ (resp. $\mathcal L_{n\varpi_{n-r}}$) descends to the quotient $T \backslash \backslash (G_{r,n})_T^{ss} (\mathcal L_{\varpi_r})$ (resp. $T \backslash \backslash (G_{n-r,n})_T^{ss} (\mathcal L_{\varpi_{n-r}})$). 

Let $P_r$ and $P_{n-r}$ be the maximal parabolic subgroups of $G$ corresponding to the simple roots $\alpha_r$ and $\alpha_{n-r}$ respectively.  Let $\mathcal P_r$ (resp. $\mathcal P_{n-r}$) denote the conjugacy class of $P_r$ (resp. $P_{n-r}$) with respect to the conjugation action of $G$. Then there is an $G$-equivariant isomorphism between $G(r,n)$ (resp. $G(n-r,n)$) and the variety $\mathcal P_r$ (resp. $\mathcal P_{n-r}$). 

There exists an outer automorphism $\phi: G \rightarrow G$ that sends $P_r$ to $P_{n-r}$. Note that the outer automorphism comes from the non-trivial diagram automorphism of the Dynkin diagram of $G$. Hence the induced map $\phi_r: \mathcal P_r \rightarrow \mathcal P_{n-r}$, $H \mapsto \phi(H)$ is an isomorphism. This map $\phi_r$ is not $G$-equivariant but the actions of $G$ on $\mathcal P_r$ and $\mathcal P_{n-r}$ are intertwined by $\phi$. That is $\phi(gHg^{-1})=\phi(g)\phi(H)\phi(g)^{-1}$.

Let $T'=\phi(T)$ and let \[q: (\mathcal P_r)_T^{ss}(\mathcal L_{n\varpi_r}) \rightarrow T \backslash \backslash (\mathcal P_r)_T^{ss}(\mathcal L_{n\varpi_r}) \,\, \mbox{and} \,\, q': (\mathcal P_{n-r})_{T'}^{ss}(\mathcal L_{n\varpi_{n-r}}) \rightarrow T' \backslash \backslash (\mathcal P_{n-r})_{T'}^{ss}(\mathcal L_{n\varpi_{n-r}})\] be the quotient morphisms. Since $\phi_r^*(\mathcal L_{n\varpi_{n-r}})=\mathcal L_{n\varpi_r}$ the map $\phi_r$ restricts to an isomorphism (we still call it $\phi_r$) \[(\mathcal P_r)_T^{ss}(\mathcal L_{n\varpi_r}) \rightarrow (\mathcal P_{n-r})_{T'}^{ss}(\mathcal L_{n\varpi_{n-r}}). \] 

Then the map $q' \circ \phi_r:(\mathcal P_r)_T^{ss}(\mathcal L_{n\varpi_r}) \rightarrow T' \backslash \backslash(\mathcal P_{n-r})_{T'}^{ss}(\mathcal L_{n\varpi_{n-r}})$ is a morphism.

 Let $U_T$ be the smallest closed subvariety of $(\mathcal P_r)_T^{ss}(\mathcal L_{n\varpi_r}) \times (\mathcal P_r)_T^{ss}(\mathcal L_{n\varpi_r})$ containing the image of the map $T \times (\mathcal P_r)_T^{ss}(\mathcal L_{n\varpi_r}) \rightarrow (\mathcal P_r)_T^{ss}(\mathcal L_{n\varpi_r}) \times (\mathcal P_r)_T^{ss}(\mathcal L_{n\varpi_r})$ defined by $(t, Q) \mapsto (tQt^{-1}, Q)$ and let $R_T$ be the smallest closed subvariety of $(\mathcal P_r)_T^{ss}(\mathcal L_{n\varpi_r}) \times (\mathcal P_r)_T^{ss}(\mathcal L_{n\varpi_r})$ containing the image of the map  $U_T \times U_T \rightarrow (\mathcal P_r)_T^{ss}(\mathcal L_{n\varpi_r}) \times (\mathcal P_r)_T^{ss}(\mathcal L_{n\varpi_r})$ defined by $((P,Q),(Q,Q')) \mapsto (P,Q')$. Similarly $R_{T'}$ can be defined for the action of $T'$ on $(\mathcal P_{n-r})_{T'}^{ss}\\(\mathcal L_{n\varpi_{n-r}})$.

The product isomorphism $(\phi_r,\phi_r): (\mathcal P_r)_T^{ss}(\mathcal L_{n\varpi_r}) \times (\mathcal P_r)_T^{ss}(\mathcal L_{n\varpi_r}) \rightarrow (\mathcal P_{n-r})_{T'}^{ss}(\mathcal L_{n\varpi_{n-r}}) \times (\mathcal P_{n-r})_{T'}^{ss}(\mathcal L_{n\varpi_{n-r}})$ maps the subvariety $R_T$ isomorphically to the subvariety $R_{T'}$. So the morphism 
$q' \circ \phi_r$ is $T$-invariant. So there exists a unique map $\psi: T \backslash \backslash (\mathcal P_r)_T^{ss}(\mathcal L_{n\varpi_r}) \rightarrow T' \backslash \backslash (\mathcal P_{n-r})_{T'}^{ss}(\mathcal L_{n\varpi_{n-r}})$ such that $\psi \circ q =q' \circ \phi_r$ and it follows that $\psi$ is an isomorphism. 

Since $T$ and $T'$ are conjugate. There exists $g \in G$ such that the conjugation $c_g:G \rightarrow G$, $h \mapsto ghg^{-1}$ restricts to an isomorphism from $T$ to $T'$. Let $r_g: \mathcal P_{n-r} \rightarrow \mathcal P_{n-r}$ be the associated right translation. Then $r_g^*(\mathcal L_{n\varpi_{n-r}})=\mathcal L_{n\varpi_{n-r}}$ and $r_g$ maps $(\mathcal P_{n-r})_T^{ss}(\mathcal L_{n\varpi_{n-r}})$ isomorphically to $(\mathcal P_{n-r})_{T'}^{ss}(\mathcal L_{n\varpi_{n-r}})$. By using the similar arguments as above we see that the quotients $T \backslash \backslash (\mathcal P_{n-r})_T^{ss}(\mathcal L_{n\varpi_{n-r}})$ and $T' \backslash \backslash (\mathcal P_{n-r})_{T'}^{ss}(\mathcal L_{n\varpi_{n-r}})$ are isomorphic to each other. Thus we conclude that the GIT quotients $T \backslash \backslash (G_{r,n}) (\mathcal L_{n\varpi_r})$ and $T \backslash \backslash (G_{n-r,n}) (\mathcal L_{n\varpi_{n-r}})$ are isomorphic.
\end{proof}

\section{Projective normality of the torus quotient of Grassmannians}\label{grassmann}

 For the fundamental weight $\varpi_r$, $n\varpi_r \in Q$. So the line bundle $\mathcal L_{n\varpi_r}$ descends to the quotient $T\backslash\backslash(G_{r,n})_T^{ss}(\mathcal L_{n\varpi_r})$ (see \cite{KS}). In this section we prove that the quotient $T\backslash\backslash(G_{2,n})_T^{ss}(\mathcal L_{n\varpi_2})$ is projectively normal with respect to the descent of $\mathcal{L}_{n\varpi_2}$ using standard monomial theory and some graph theoretic techniques and we give a degree bound of the generators of the homogeneous coordinate ring of $T\backslash\backslash(G_{3,6})_T^{ss}(\mathcal L_{2\varpi_3})$. 

\begin{theorem}\label{Theo1} The GIT quotient $T\backslash\backslash(G_{r,n})_T^{ss}(\mathcal L_{n\varpi_r})$ is projectively normal with respect to the descent of $\mathcal L_{n\varpi_r}$ 
if $r=1,2,n-2,n-1$. 
\end{theorem}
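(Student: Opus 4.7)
The plan is to pair up the four cases via Proposition~\ref{propsan} and then treat the two surviving ones separately. Proposition~\ref{propsan} provides an isomorphism of GIT quotients for $(r,n)$ and $(n-r,n)$, so the cases $r=n-1$ and $r=n-2$ reduce to $r=1$ and $r=2$ respectively. For $r=1$, $G_{1,n}=\mathbb{P}^{n-1}$, and a degree-$nk$ monomial $x_1^{a_1}\cdots x_n^{a_n}$ is $T$-invariant exactly when all exponents $a_i$ are equal; thus $\bigoplus_k H^0(\mathbb{P}^{n-1},\mathcal{O}(nk))^T=\mathbb{C}[x_1\cdots x_n]$ is generated in degree one and projective normality is immediate.

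The essential case is $r=2$. Using the SMT theorem of Section~\ref{smt}, I first identify the standard monomial basis of $H^0(G_{2,n},\mathcal{L}_{n\varpi_2}^{\otimes m})$: it consists of $p_\Gamma$ for $\Gamma$ a standard Young tableau with two columns each of length $nm$, filled from $\{1,\ldots,n\}$. By Lemma~\ref{6.2}, the $T$-invariant $p_\Gamma$ are precisely those for which every integer $1,\ldots,n$ appears exactly $2m$ times. Encode such a $\Gamma$ as the loopless multigraph $\mathcal{G}_\Gamma$ on vertex set $\{1,\ldots,n\}$ whose edge multiset is the collection of rows $\{i,j\}$ of $\Gamma$ (with $i<j$); this multigraph has $nm$ edges and is $2m$-regular. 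Since Pl\"ucker coordinates commute, $p_\Gamma=\prod_{e\in\mathcal{G}_\Gamma}p_e$ depends only on $\mathcal{G}_\Gamma$.

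Next, I apply Petersen's 2-factor theorem (Theorem~\ref{petersen}) to decompose $\mathcal{G}_\Gamma=\mathcal{G}_1\cup\cdots\cup\mathcal{G}_m$ as an edge-disjoint union of $2$-regular spanning subgraphs. For each $i$, the product $q_i:=\prod_{e\in\mathcal{G}_i}p_e$ is a product of $n$ Pl\"ucker coordinates whose underlying multigraph is $2$-regular, so $q_i$ has zero $T$-weight and therefore lies in $H^0(G_{2,n},\mathcal{L}_{n\varpi_2})^T$ (note that $q_i$ need not itself be a standard monomial, but Pl\"ucker relations straighten it into a linear combination of $T$-invariant standard monomials of degree one). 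Because the multiset of edges of $\mathcal{G}_\Gamma$ is the disjoint union of those of the $\mathcal{G}_i$, we obtain $p_\Gamma=q_1\cdots q_m$, a product of $m$ elements of $H^0(G_{2,n},\mathcal{L}_{n\varpi_2})^T$. As the $T$-invariant standard monomials span $H^0(G_{2,n},\mathcal{L}_{n\varpi_2}^{\otimes m})^T$, the invariant ring is generated in degree one. Normality of the homogeneous coordinate ring is automatic, since it is the ring of invariants of a reductive group acting on a normal variety, and projective normality follows.

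The hard part will be conceptual rather than computational: setting up the dictionary between $T$-invariant standard monomials and $2m$-regular multigraphs carefully, and accepting that a $2$-factor $\mathcal{G}_i$ produced by Petersen need not admit any standardization --- it contributes to $H^0(G_{2,n},\mathcal{L}_{n\varpi_2})^T$ only after straightening via Pl\"ucker relations. Once this dictionary is in place, Petersen's theorem supplies the combinatorial decomposition and the rest is bookkeeping.
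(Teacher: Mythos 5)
Your proposal is correct and follows essentially the same route as the paper: reduce $r=n-1,n-2$ to $r=1,2$ via Proposition~\ref{propsan}, handle $r=1$ directly, and for $r=2$ encode a $T$-invariant standard monomial as a $2k$-regular multigraph and split it into $2$-factors by Petersen's theorem, each contributing a degree-one invariant. Your remark that the $2$-factors need only give elements of $R_1$ rather than standard monomials is a harmless (and slightly more robust) variation --- though in fact a sub-multiset of a chain of Pl\"ucker indices is again a chain, so the paper's claim that they are standard also holds.
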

	   	   
\begin{proof} 

For $r=1$, $G_{r,n} \cong \mathbb P^{n-1}$ and hence the quotient $T\backslash\backslash\mathbb P^{n-1}(\mathcal O(n))$ is projectively normal. 

Let $r=2$. We have \[ T\backslash\backslash(G_{2,n})_T^{ss}(\mathcal L_{n\varpi_2}) = Proj({\oplus_{k \in \mathbb{Z}_{\geq 0}}}H^0(G_{2,n},\mathcal{L}^{\otimes k}_{n\varpi_2})^T)=Proj(\oplus_{k\in \mathbb{Z}_{\geq0}}R_k),\]where $R_k :=H^0(G_{2,n},\mathcal{L}^{\otimes k}_{n\varpi_2})^T$. Let $R :=\oplus_{k\in \mathbb{Z}_{\geq0}}R_k$. The $\mathbb{C}$-algebra $R$ is normal since ${\oplus_{k \in \mathbb{Z}_{\geq 0}}}H^0(G_{2,n},\mathcal{L}^{\otimes k}_{n\varpi_2})$ is normal. Hence it is enough to prove that $R$ is generated by $R_1$ as a $\mathbb{C}$-algebra.
	   
As a vector space the $T$-invariant standard monomials in Pl$\ddot{\mbox{u}}$cker coordinates of the form of $\prod_{i<j}p_{ij}^{m_{ij}}$ form a $\mathbb{C}$-basis of $R_k$, where $1 \leq i,j \leq n$. Note that since $\prod_{i<j}p_{ij}^{m_{ij}} \in R_k$ we have $\sum_{j>i} m_{i,j} + \sum_{j<i} m_{j,i}= 2k$ for all $1 \leq i \leq n$ and $\sum_{1 \leq i < j \leq n}m_{ij}=nk$. 
	   
Given a standard monomial $M=\prod_{i<j}p_{ij}^{m_{ij}}$ in Pl$\ddot{\mbox{u}}$cker coordinates we associate a graph as follows. For each $1 \leq i \leq n$ we associate a vertex $v_i$ and for each $p_{ij}$ appearing in $M$ we associate an edge joining the vertex $v_i$ to the vertex $v_j$. Similarly using the reverse process, from every graph, we can associate a monomial in Pl$\ddot{\mbox{u}}$cker coordinates. If moreover $M$ is $T$-invariant then each of the indices $1 \leq i \leq n$ appears exactly $2k$ times in the monomial $M$. So each vertex in the graph is connected to exactly $2k$ number of edges. Hence it is a $2k$-regular graph.

Using Petersen's $2$-factor theorem this graph can be decomposed into $k$ line-disjoint $2$-factors. Each $2$-factor sub-graph is associated to a standard monomial of degree $n$ in Pl$\ddot{\mbox{u}}$cker coordinates, where each integer occurs exactly $2$ times. This associated monomial is standard because the original monomial was standard. The standard monomials associated to the $2$-factor sub-graphs lie in $R_1$. So by induction we conclude that each standard monomial in $R_k$ can be written as a product of $k$ standard monomials in $R_1$. So $R$ is generated by $R_1$ as an algebra and hence the GIT quotient $T\backslash\backslash(G_{2,n})_T^{ss}(\mathcal L_{n\varpi_2})$ is projectively normal with respect to the descent of the line bundle $\mathcal{L}_{n\varpi_2}$.

For $r=n-2$ and $n-1$, the proof follows from \cref{propsan}.

\end{proof}

\begin{corollary}\label{cor1}
 The GIT quotient of a Schubert variety and a Richardson variety in $G_{2,n}$ by a maximal torus $T$ of $SL_n$ is projectively normal with respect to the descent of the line bundle $\mathcal L_{n\varpi_2}$.
 \end{corollary}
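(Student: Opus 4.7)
The plan is to adapt the proof of Theorem~\ref{Theo1} to the Schubert and Richardson subvarieties by restricting its graph-theoretic argument to edges cut out by the appropriate Bruhat constraints. Let $X_w \subseteq G_{2,n}$ be the Schubert variety indexed by $w \in I_{2,n}$ and, for $v \leq w$, let $X_w^v = X_w \cap X^v$ denote the associated Richardson variety. Since $n\varpi_2$ lies in the root lattice $Q$, Theorem~\ref{shrawan} ensures that $\mathcal{L}_{n\varpi_2}$ descends. First I would note that $\bigoplus_k H^0(X_w, \mathcal{L}_{n\varpi_2}^{\otimes k})$ is the $n$-th Veronese subring of the projectively normal Pl\"{u}cker coordinate ring of $X_w$ and hence is normal; taking $T$-invariants preserves normality (as $T$ is reductive), so the ring $R(X_w) := \bigoplus_k H^0(X_w, \mathcal{L}_{n\varpi_2}^{\otimes k})^T$ is normal, and similarly for $X_w^v$. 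It therefore suffices to show $R(X_w)$ (resp.\ $R(X_w^v)$) is generated by its degree-one component as a $\mathbb{C}$-algebra.

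By Standard Monomial Theory, $R_k(X_w)$ has a basis consisting of the $T$-invariant standard monomials $p_\Gamma$ whose rows $\tau = (i,j)$ all satisfy $\tau \leq w$ in the Bruhat order on $I_{2,n}$; for $R_k(X_w^v)$ one further imposes $v \leq \tau$. Given such a monomial $M = \prod_{i<j} p_{ij}^{m_{ij}}$, I would form the same $2k$-regular multigraph $\mathcal{G}_M$ on $\{v_1, \dots, v_n\}$ as in the proof of Theorem~\ref{Theo1}, and observe that each edge $(i,j)$ of $\mathcal{G}_M$ already lies in the admissible Bruhat interval. Applying Petersen's $2$-factor theorem (Theorem~\ref{petersen}) decomposes $\mathcal{G}_M$ into $k$ edge-disjoint $2$-factors; since every $2$-factor is a sub-multigraph of $\mathcal{G}_M$, its edges automatically satisfy the same Bruhat bounds.

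Each $2$-factor contributes a degree-one $T$-invariant monomial $M_i$ (a product of $n$ Pl\"{u}cker coordinates in which every index appears twice), all of whose Pl\"{u}cker factors are nonvanishing on $X_w$ (resp.\ on $X_w^v$); hence $M_i \in R_1(X_w)$ (resp.\ $R_1(X_w^v)$). Multiplying the $k$ pieces recovers $M = M_1 \cdots M_k$, yielding generation in degree one and, combined with normality, projective normality of $T \backslash\backslash (X_w)_T^{ss}(\mathcal{L}_{n\varpi_2})$ and $T \backslash\backslash (X_w^v)_T^{ss}(\mathcal{L}_{n\varpi_2})$ with respect to the descent of $\mathcal{L}_{n\varpi_2}$. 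The argument presents no essentially new difficulty beyond Theorem~\ref{Theo1}; the main technical input not already used there is the standard monomial basis description of $H^0(X_w, \mathcal{L}_{n\varpi_2}^{\otimes k})$ and $H^0(X_w^v, \mathcal{L}_{n\varpi_2}^{\otimes k})$ in terms of Bruhat-bounded rows, which is classical. The only nuance is to emphasise that the $M_i$'s produced by the decomposition need not be standard on $X_w$, but they certainly lie in $R_1(X_w)$ as sections, which is all that generation in degree one requires.
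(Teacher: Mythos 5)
Your argument is correct, but it takes a genuinely different route from the paper. The paper does not revisit the combinatorics at all: it observes that, since $T$ is linearly reductive, the restriction map $H^0(G_{2,n},\mathcal{L}_{n\varpi_2}^{\otimes k})^T \to H^0(X_w,\mathcal{L}_{n\varpi_2}^{\otimes k})^T$ is surjective, so degree-one generation established in \cref{Theo1} for the Grassmannian is simply pushed forward to the Schubert variety; for the Richardson variety it invokes the surjectivity of $H^0(X_w,\mathcal{L}_{n\varpi_2}^{\otimes k}) \to H^0(X_w^v,\mathcal{L}_{n\varpi_2}^{\otimes k})$ from Brion--Lakshmibai and again takes $T$-invariants. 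Combined with normality of the semistable loci this gives the result in a few lines. You instead rerun the graph-theoretic factorization intrinsically on the subvariety, using the standard monomial basis of $H^0(X_w,\cdot)$ (rows bounded above by $w$, and below by $v$ in the Richardson case) and noting that any $2$-factor extracted by Petersen's theorem inherits the Bruhat bounds because its edges form a sub-multiset of the original chain of rows. This is valid --- and your closing caveat is even slightly overcautious, since a sub-multiset of a totally ordered set of rows can be reordered into a standard monomial that remains standard on $X_w^v$ --- but it costs you the standard monomial basis for Schubert and Richardson varieties as an input, which is essentially the same Brion--Lakshmibai machinery the paper uses more economically through the surjection of sections. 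What your approach buys is a self-contained combinatorial proof that exhibits the degree-one factors explicitly rather than deducing their existence from a surjectivity statement; what the paper's approach buys is brevity and independence from the combinatorial structure of the subvariety.
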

 \begin{proof}
Let $X_w$ be a Schubert variety in $G_{2,n}$, $w \in W^{P_{\alpha_2}}$. Since $T$ is linearly reductive, the restriction map $\phi: H^0(G_{2,n}, \mathcal{L}_{n\varpi_2}^{\otimes k})^T \to H^0(X_w, \mathcal{L}_{n\varpi_2}^{\otimes k})^T$ such that $f \mapsto f|_{X_w}$ is surjective. So by \cref{Theo1}, $H^0(X_w, \mathcal{L}_{n\varpi_2}^{\otimes k})^T$ is generated by $H^0(X_w, \mathcal{L}_{n\varpi_2})^T$. Since $(X_w)^{ss}_T(\mathcal{L}_{n\varpi_2})$ is normal so $T \backslash \backslash (X_w)^{ss}_T(\mathcal{L}_{n\varpi_2})$ is projectively normal.
 
 Let $X_w^v$ be a Richardson variety in $G_{2,n}$, $v, w \in W^{P_{\alpha_2}}$. By \cite[Proposition~1]{BL}, the map $H^0(X_w, \mathcal{L}_{n\varpi_2}^{\otimes k}) \to H^0(X_w^v, \mathcal{L}_{n\varpi_2}^{\otimes k})$ is surjective. Since $T$ is linearly reductive, the map $\phi: H^0(X_w, \mathcal{L}_{n\varpi_2}^{\otimes k})^T \to H^0(X_w^v, \mathcal{L}_{n\varpi_2}^{\otimes k})^T$ surjective. 
 Since the quotient $T\backslash\backslash (X_w)^{ss}_T(\mathcal{L}_{n\varpi_2})$ is projectively normal and  $(X^v_w)^{ss}_T(\mathcal{L}_{n\varpi_2})$ is normal, the quotient $T \backslash \backslash (X_w^v)^{ss}_T(\mathcal{L}_{n\varpi_2})$ is projectively normal.
 \end{proof}
 
 For $r \geq 3$, the combinatorics of the standard monomials in $\oplus_{k \in \mathbb{Z}_{\geq 0}}(H^0(G_{r,n}, \mathcal{L}_r^{\otimes k})^T)$ is complicated. So we restrict our case to $n=6$. Again $\mathcal L_{\frac{6}{gcd(6,r)}\varpi_r}$ is the smallest line bundle on $G_{r,6}$ which descends to the quotient $T\backslash\backslash(G_{r,6})_T^{ss}(\mathcal L_{\frac{6}{gcd(6,r)}\varpi_r})$. 
 
 For $r =1, 2, 4$ and $5$ the quotient $T\backslash\backslash(G_{r,6})_T^{ss}(\mathcal L_{\frac{6}{gcd(6,r)}\varpi_r})$ is projectively normal with respect to the descent of the line bundle $\mathcal L_{\frac{6}{gcd(6,r)}\varpi_r}$. For $r=1$, $G_{1,6} \cong \mathbb P^{5}$ and hence the quotient $T\backslash\backslash(\mathbb P^{5})^{ss}_T(\mathcal O(6))$ is projectively normal.
 For $r=2$, $T \backslash\backslash (G_{2,6})^{ss}_T(\mathcal{L}_{3\varpi_2})$ is projectively normal as proved in \cite{HMSV}.
 For $r =4$ and $5$ the quotient $T\backslash\backslash(G_{r,6})_T^{ss}(\mathcal L_{\frac{6}{gcd(6,r)}\varpi_r})$ is projectively normal by \cref{propsan}.

 In the following theorem we give a degree bound of the generators of the homogeneous coordinate ring of the quotient  $T\backslash\backslash(G_{3,6})_T^{ss}(\mathcal L_{2\varpi_3})$.

\begin{theorem}\label{Theo2}
The homogeneous coordinate ring of the quotient $T\backslash\backslash(G_{3,6})_T^{ss}(\mathcal L_{2\varpi_3})$ is generated by elements of degree at most $2$.
\end{theorem}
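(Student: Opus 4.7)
The homogeneous coordinate ring to analyze is $R = \bigoplus_{k\geq 0} R_k$, where $R_k := H^0(G_{3,6}, \mathcal L_{2k\varpi_3})^T$. By the Standard Monomial Theorem together with \cref{6.2}, a $\mathbb C$-basis of $R_k$ is given by the standard $T$-invariant monomials $M = p_{\tau_1} p_{\tau_2} \cdots p_{\tau_{2k}}$, where $\tau_1 \leq \tau_2 \leq \cdots \leq \tau_{2k}$ forms a multichain in the Bruhat order on $\binom{[6]}{3}$ and each of $1,\dots,6$ occurs in exactly $k$ of the triples $\tau_i$. To each such $M$ we associate a $3$-uniform $k$-regular multi-hypergraph $\mathcal H_M$ on the vertex set $\{1,\dots,6\}$ whose edges (with multiplicity) are $\tau_1,\dots,\tau_{2k}$. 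The plan is to show, by induction on $k$, that every such $M$ with $k\geq 3$ lies in the subalgebra of $R$ generated by $R_1 \cup R_2$, by exhibiting a factorization $M = N\cdot M'$ with either $N \in R_1$, $M' \in R_{k-1}$, or $N \in R_2$, $M' \in R_{k-2}$.

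The first and principal case is when $\mathcal H_M$ contains a pair of complementary edges $\tau_i$ and $\tau_j = \{1,\dots,6\}\setminus \tau_i$. Then $p_{\tau_i}p_{\tau_j}$ is a $T$-invariant degree-$1$ element of $R_1$; deleting these two Plückers from the ordered list $\tau_1\leq\cdots\leq\tau_{2k}$ leaves a shorter sorted sequence, hence a standard $T$-invariant monomial of $R_{k-1}$. By the inductive hypothesis $M \in R_1\cdot R_{k-1} \subseteq \mathbb C[R_1\cup R_2]$.

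The second case assumes $\mathcal H_M$ contains no complementary pair. Here the strategy is to produce a sub-multi-hypergraph $\mathcal H' \subset \mathcal H_M$ consisting of four edges, with each vertex appearing in exactly two of them. The corresponding $4$-Plücker product $N$ is then $T$-invariant and lies in $R_2$ (after Plücker straightening to a sum of standard monomials if necessary), while the residual product of the remaining $2k-4$ Plückers is $T$-invariant of degree $k-2$ and lies in $R_{k-2}$, giving $M \in R_2\cdot R_{k-2}$.

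The combinatorial core, and the main obstacle, is verifying the second case: a standard $T$-invariant $k$-regular $3$-uniform multi-hypergraph on $\{1,\dots,6\}$ with $k\geq 3$ and no complementary pair must contain a $2$-regular $4$-edge sub-multi-hypergraph. Some form of standardness is essential — without it, one can construct $3$-regular multi-hypergraphs on $\{1,\dots,6\}$ with no complementary pair and hence no $2$-regular sub-hypergraph either (for $k=3$ a $2$-regular sub-hypergraph would have a complementary pair as its $2$-edge complement). For the base case $k=3$, a counting argument on the sums $\text{sum}(\tau_i) = a_i+b_i+c_i$ — which are Bruhat-monotone and total $3\cdot(1+2+\cdots+6) = 63$, hence average $10.5$ — combined with the Bruhat-chain condition is used to force a complementary pair always to appear, so only Case 1 can arise. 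For $k\geq 4$ one has enough edges, distributed across the twenty elements of $\binom{[6]}{3}$ (paired into ten antipodal classes under complementation), that a direct extraction of a $2$-regular pattern of four edges can be made by a Petersen-style pigeonhole argument on the multi-hypergraph, using that the no-complementary-pair hypothesis restricts $\mathcal H_M$ to at most one antipode per class and so concentrates multiplicities enough to produce the desired sub-hypergraph.
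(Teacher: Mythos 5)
Your overall strategy --- factor off either a complementary pair (an element of $R_1$) or a four-edge, $2$-regular sub-multiset of triples (an element of $R_2$) and induct --- is exactly the shape of the paper's argument, and the reductions you do carry out are sound: removing rows from a standard tableau preserves standardness, the residual monomial is $T$-invariant, and straightening the extracted factor is harmless since $R_1$ and $R_2$ are the full invariant weight spaces. But there is a genuine gap precisely where you flag ``the combinatorial core'': you never prove that a standard $T$-invariant monomial with no complementary pair must contain a $2$-regular four-edge sub-multiset. For $k=3$ you appeal to ``a counting argument on the sums $a_i+b_i+c_i$'' averaging $10.5$; this observation does not by itself force a complementary pair --- it is compatible with many non-complementary multichains, and ruling them out requires a row-by-row analysis that you have not supplied. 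For $k\geq 4$ you invoke a ``Petersen-style pigeonhole argument,'' but Petersen's $2$-factor theorem is a statement about graphs, and there is no off-the-shelf analogue for regular $3$-uniform multi-hypergraphs (such hypergraphs in general need not contain $2$-regular spanning sub-hypergraphs); the ``at most one antipode per class'' observation narrows the support to ten triples but does not by itself produce the desired four edges.

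The paper closes exactly this gap by brute force: it encodes $M$ as a $2k\times 3$ standard tableau, pins down $\mathrm{Row}_1\in\{(1,2,3),(1,2,4),(1,2,5),(1,3,4),(1,3,5)\}$ and the last row via the column counts $N_{t,j}$, and in each branch either exhibits a complementary pair $p_{\tau}p_{\tau^{c}}\in R_1$ dividing $M$ or shows that $M$ is forced into a shape divisible by one of the two specific $R_2$-elements $p_{124}p_{135}p_{236}p_{456}$ or $p_{123}p_{145}p_{246}p_{356}$. Your high-level dichotomy is correct --- those two degree-$2$ elements are precisely the $2$-regular four-edge configurations with no complementary pair --- but to complete the proof you would need an analysis of comparable granularity; as written, the key combinatorial lemma is asserted rather than proved.
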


\begin{proof}
We have \[ T\backslash\backslash(G_{3,6})_T^{ss}(\mathcal L_{2\varpi_3}) = Proj({\oplus_{k \in \mathbb{Z}_{\geq 0}}}H^0(G_{3,6},\mathcal{L}^{\otimes k}_{2\varpi_3})^T)=Proj(\oplus_{k\in \mathbb{Z}_{\geq0}}R_k),\]
where $R_k = H^0(G_{3,6}, \mathcal{L}_{2\omega_3}^{\otimes k})^T$. Let $M$ be a standard monomial in Pl$\ddot{\mbox{u}}$cker coordinates in $R_k$. Then $M$ is associated to a $2k \times 3$ tableau having each of the integers from $1$ to $6$ appearing exactly $k$ times with strictly increasing rows and non-decreasing columns. Let $\textrm{Row}_{i}$ denote the $i$th row of the tableau and $\textrm{Col}_{j}$ denote the $j$th column of the tableau, where $1 \leq i \leq 2k$ and $1 \leq j \leq 3$. Let $E_{i,j}$ be the $(i,j)$-th entry of the tableau and $N_{t,j} = \#\{ i| E_{i,j}=t\}$. Clearly, 
\begin{equation}\label{colsum} 
\sum_{t} N_{t,j} = 2k \quad \textrm{and} \quad  \sum_{j} N_{t,j} = k.
\end{equation}

Note that $E_{i,1} = 1$ for all $1 \leq i \leq k$ and $E_{i,3} = 6$ for all $ k+1 \leq i \leq 2k$.

 If $E_{1,2} = 4$ then $N_{t,1} = k$ for $1 \leq t \leq 3$, a contradiction. Similarly $E_{1,2}$ cannot be $5$. So, $\textrm{Row}_1$ can be one of the elements from the set $\{(1,2,3), (1,2,4), (1,2,5), (1,3,4), (1,3,5)\}$.

If $\textrm{Row}_1 = (1,3,5)$ then we have $N_{2,1}=k$ 
and $E_{i,1} = 2$ for all $k+1 \leq i \leq 2k$. In particular, we have $E_{2k,1}=2$. Since $E_{1,3}=5$, we have $N_{4,2}=k$ and $N_{3,2}=k$. So we have $E_{2k,2}=4$. Hence we conclude that, $\textrm{Row}_{2k} = (2,4,6)$. Then $p_{135}p_{246} \in R_1$ and divides $M$. So by induction we are done. 

If $\textrm{Row}_1 = (1,3,4)$ then we have $E_{2k,1} = 2$. Since $E_{1,3}=4$ we have $N_{5,2} \geq 1$. So $E_{2k,2}=5$. Hence we conclude that, $\textrm{Row}_{2k} = (2,5,6)$. Then  $p_{134}p_{256} \in R_1$ and is a factor of $M$. 

 If $\textrm{Row}_1 = (1,2,5)$ then $N_{5,3} = k$ and $E_{2k,2}=4$. Since $E_{1,2}=2$ we have $N_{3,1} \geq 1$ and so $E_{2k,1} = 3$. So $\textrm{Row}_{2k} = (3,4,6)$. Then  $p_{125}p_{346} \in R_1$ and is a factor of $M$. 

We are now left with two cases, either $\textrm{Row}_1 = (1,2,3)$ or $\textrm{Row}_1 = (1,2,4)$
 
\textbf{Case - 1} {$\textrm{Row}_1 = (1,2,4)$}

Since $E_{1,3} = 4$ we have $N_{5,3} < k$. Since $N_{5,1}=0$ it follows that $N_{5,2} \geq 1$ and hence, $E_{2k,2}=5$. If $N_{4,1} = 0$ then $N_{3,1} \geq 1$. It follows that $\textrm{Row}_{2k} = (3,5,6)$. So the monomial $p_{124}p_{356} \in R_1$ and is a factor of $M$. If $N_{4,1} \geq 1$ then $E_{2k,1}=4$ and hence  $\textrm{Row}_{2k} = (4,5,6)$. We claim that $\textrm{Row}_{k} = (1,3,5)$. 

 (a) If $E_{k,2}=2$ then we have $E_{i,2} =2$ for all $1 \leq i \leq k$. Since $E_{1,3}=4$ we have $N_{3,3} =0$. Since $\textrm{Row}_{2k}=(4,5,6)$ we have $N_{3,1}+N_{3,2} < k$, a contradiction.
 
 (b) If $E_{k,2}=4$, then $(N_{4,1}+N_{4,2}+N_{4,3})+(N_{5,2}+N_{5,3}) \geq 2k+2$, which is a contradiction.
 
 (c) For a similar reason we cannot have $E_{k,2}=5$.
 
  Hence, $E_{k,2}=3$. 
  
  If $E_{k,3}=4$ then $E_{i,3}=4$ for all $1 \leq i \leq k$. So, $N_{4,1}+N_{4,3} \geq k+1$, a contradiction.\\
  So we conclude that $\textrm{Row}_{k} = (1,3,5)$, the claim is proved.
  
Now we consider the entries $E_{i,2}$, where $1 \leq i \leq k$. Since $E_{1,2} = 2$ and $E_{k,2}=3$ we have $E_{i,2} = 2$ or $3$ for $1 \leq i \leq k$. Let $m_1 = \#\{ i :  E_{i,2}=2, 1 \leq i \leq k\}$ and $m_2 = \#\{ i :  E_{i,2}=3, 1 \leq i \leq k\}$. Then $m_1,m_2\geq 1$ and $m_1+m_2=k$.

\textbf{Subcase - $1$}. $m_1=m_2= \frac{k}{2}$.
 
 (a) If $N_{4,3}= \frac{k}{2}$ then $\textrm{Row}_i = (1,2,4)$ for all $1 \leq i \leq \frac{k}{2}$ and $\textrm{Row}_i= (1,3,5)$ for all $\frac{k}{2}+1 \leq i \leq k$. Then the monomial $M$ is $p_{124}^{\frac{k}{2}}p_{135}^{\frac{k}{2}}p_{236}^qp_{246}^{\frac{k}{2}-q}p_{356}^{\frac{k}{2}-q}p_{456}^q$ with $q \geq 1$. \\
If $q < \frac{k}{2}$ then $M$ has a factor $p_{124}p_{356} \in R_1$.\\
 If $q=\frac{k}{2}$ then the monomial $M$ is $(p_{124}p_{135}p_{236}p_{456})^\frac{k}{2}$. Then $p_{124}p_{135}p_{236}p_{456} \in R_2$ and is a factor of $M$. 
 
  (b) If $N_{4,3} < \frac{k}{2}$ then $N_{5,3} > \frac{k}{2}$ and so $E_{\frac{k}{2},3}=5$. Hence, $\textrm{Row}_{\frac{k}{2}}=(1,2,5)$. Since $N_{5,3} > \frac{k}{2}$ we have $N_{5,2} < \frac{k}{2}$ and since $N_{2,1} = \frac{k}{2}$ we have $E_{i,1}=2$ for all $k+1 \leq i \leq \frac{3k}{2}$ and $2 \notin \textrm{Row}_{\frac{3k}{2}+1}$. Since $N_{5,2} < \frac{k}{2}$ we have $5 \notin E_{\frac{3k}{2}+1,2}$ and hence $\textrm{Row}_{\frac{3k}{2}+1} = (3,4,6)$. Then the monomial $p_{125}p_{346} \in R_1$ and is a factor of $M$. 
  
(c)If $N_{4,3}> \frac{k}{2}$ then $\textrm{Row}_{\frac{k}{2}+1}=(1,3,4)$. Now using a similar argument as (b) we get $p_{134}p_{256} \in R_1$ and is a factor of $M$.
 
\textbf{Subcase - $2$}. Let $m_1 \neq m_2$.
 
 Let $m_1 > m_2$. Note that $m_1  > \frac{k}{2}$.
 
(a) If $N_{4,3} = m_1$ then $\textrm{Row}_{i}=(1,2,4)$ for all $1 \leq i \leq m_1$ and $\textrm{Row}_{i}=(1,3,5)$ for all $m_1+1 \leq i \leq m_2$ i.e. $N_{5,3}=m_2$, $N_{5,2}=m_1$ and $N_{2,1}=m_2$. Hence, $N_{3,2}<2m_2<k$ and it follows that $N_{3,1} > 1$. So $\textrm{Row}_{k+m_2+1} = (3,5,6)$. So the monomial $p_{124}p_{356} \in R_1$ and is a factor of $M$.

(b) If $N_{4,3} > m_1$ then $\textrm{Row}_{m_1+1}=(1,3,4)$ and $N_{5,3} < m_2$. Hence $N_{5,2} > m_1$ and $\textrm{Row}_{k+m_2} = (2,5,6)$. So the monomial $p_{134}p_{256} \in R_1$ and is a factor of $M$.

(c) If $N_{4,3} < m_1$, Now using a similar argument as (b) we get $p_{125}p_{346} \in R_1$ and is a factor of $M$.
 
 The proof for the case $m_1<m_2$ is similar.
 
 \textbf{Case - 2} {$\textrm{Row}_1 = (1,2,3)$}
 
 Similarly as in Case - 1 we see that either $M$ has a factor in $R_1$ or $p_{123}p_{145}p_{246}p_{356}$ divides $M$ and is an element of $R_2$. 
 
 So by induction we conclude that $M$ is generated by the elements of degree at most $2$ and hence  the homogeneous coordinate ring of the quotient $T\backslash\backslash(G_{3,6})_T^{ss}(\mathcal L_{2\varpi_3})$ is generated by elements of degree at most $2$.
 \end{proof}
 
 \section{Torus quotient of partial flag varieties}\label{partialflag}

Let $G =SL_n$ and $\varpi_1$, $\varpi_2$ be the fundamental weights associated to the simple roots $\alpha_1$ and $\alpha_2$ respectively. Let $P = P_{\alpha_1} \cap P_{\alpha_2}$. Since $n(r_1\varpi_1+r_2\varpi_2) \in Q$ for $r_1, r_2 \in \mathbb{N}$, the line bundle $\mathcal{L}_{n(r_1\varpi_1+r_2\varpi_2)}$ descends to the quotient $T \backslash\backslash (G/P)(\mathcal{L}_{n(r_1\varpi_1+r_2\varpi_2)})$ (see \cite{KS}). In this section we prove that the quotient $T \backslash\backslash (G/P)^{ss}_T(\mathcal{L}_{n(r_1\varpi_1+r_2\varpi_2)})$ is projectively normal with respect to the descent of the line bundle $\mathcal{L}_{n(r_1\varpi_1+r_2\varpi_2)}$.

\begin{remark}
\label{2reg}
Any $2$-regular graph is a disjoint union of (i) even cycles, (ii) odd cycles, (iii) even length paths starting with a loop and ending with a loop, (iv) odd length paths starting with a loop and ending with a loop, and (v) vertices with two loops.
\end{remark}

\begin{proof} Recall that in our case a loop contributes degree $1$ to a vertex. It is well known that a $2$-regular connected simple graph is a cycle \cite[pg.~83]{AK}. If the graph is not simple then it may have loops and multiple edges. If it has multiple edges then at least two of the  vertices are connected by two edges, hence, it is a $2$-cycle. If the graph has a loop at a vertex $v$ then either $v$ has another loop around it or it is connected to another vertex $w$ by an edge. In the later case $w$ may have another loop around it or connected to another vertex $u$ by an edge. In the former case the graph is an odd length path starting with a loop and ending with a loop and continuing this process we get either an even length path starting with a loop and ending with a loop or an odd length path starting with a loop and ending with a loop. 
\end{proof}
%\vspace{-.655cm}

%\vspace{-.65cm}

Now we are in a position to state and prove the main theorem of this section.

 \begin{theorem}\label{Theo3}
Let $G=SL_n$ and $P = P_{\alpha_1} \cap P_{\alpha_2}$, $\varpi = r_1\varpi_1+r_2\varpi_2$. The GIT quotient $T\backslash\backslash(G/P)_T^{ss}(\mathcal L_{n\varpi})$ is projectively normal with respect to the descent of the line bundle $\mathcal{L}_{n\varpi}$.
  \end{theorem}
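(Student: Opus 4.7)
Since $R := \bigoplus_{k \geq 0} H^0(G/P, \mathcal L_{n\varpi}^{\otimes k})^T$ is the $T$-invariants of the normal section ring of the very ample line bundle $\mathcal L_{n\varpi}$, $R$ is integrally closed, so projective normality reduces to showing that $R$ is generated over $\mathbb{C}$ by $R_1 = H^0(G/P, \mathcal L_{n\varpi})^T$. By \cref{smt}, a basis of $R_k$ is given by the $T$-invariant standard monomials $p_\Gamma$, where $\Gamma$ is a standard Young tableau of shape $kn\varpi$: it carries $knr_1$ columns of height $1$ (each a Pl\"ucker coordinate $p_i$) and $knr_2$ columns of height $2$ (each a Pl\"ucker coordinate $p_{ij}$). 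By \cref{6.2}, $T$-invariance amounts to every $i \in \{1,\ldots,n\}$ appearing exactly $k(r_1+2r_2)$ times in $\Gamma$.

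Mimicking the graph encoding from the proof of \cref{Theo1}, I will attach to each such $p_\Gamma$ a general graph $\mathcal G_\Gamma$ on vertex set $\{v_1,\ldots,v_n\}$, placing a loop at $v_i$ for every factor $p_i$ and an edge $v_iv_j$ for every factor $p_{ij}$. In the convention of \cref{rdeg} this is a $k(r_1+2r_2)$-regular graph with exactly $knr_1$ loops and $knr_2$ non-loop edges; the elements of $R_1$ correspond precisely to those subgraphs that are $(r_1+2r_2)$-regular with $nr_1$ loops and $nr_2$ edges. The problem therefore reduces to decomposing $\mathcal G_\Gamma = \mathcal G^{(1)}\circ\cdots\circ\mathcal G^{(k)}$ into spanning subgraphs of that form. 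Standardness of each resulting $p_{\Gamma^{(j)}}$ will be automatic, since the columns of $\Gamma^{(j)}$ are a sub-multiset of those of the standard $\Gamma$, and a sub-multiset of a chain (in the relevant Bruhat order) is again a chain.

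I will build the decomposition by induction on $k$: it suffices to peel off a single $(r_1+2r_2)$-regular spanning subgraph of $\mathcal G_\Gamma$ with exactly $nr_1$ loops, since the identity $(\#\text{loops}) + 2(\#\text{edges}) = n(r_1+2r_2)$ then forces the edge count to be $nr_2$ automatically. To produce such a factor, I will first apply the loop-pairing procedure of \cref{rdeg} to pass to the \cite{AK} convention without changing vertex-degrees; \cref{petersen} then extracts an $(r_1+2r_2)$-regular factor when $r_1+2r_2$ is even. When $r_1+2r_2$ is odd, I will iterate \cref{petersen} down to $2$-factors, analyze each component via \cref{2reg}, and apply \cref{1factor} to a bipartite double cover to peel off a remaining odd-parity $1$-factor. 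Undoing the loop-pairing returns the desired subgraph of the original $\mathcal G_\Gamma$.

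The main obstacle will be that \cref{petersen} controls the total degree of a factor but not its loop count. The fine structure in \cref{2reg} --- each $2$-regular component carries either $0$ loops (cycles) or exactly $2$ loops (paths with loops at both ends, or isolated vertices with two loops) --- will provide just enough granularity to redistribute loops across the $k$ factors so that each receives exactly $nr_1$ of them. The required balance can be phrased as a transportation problem with row sums $a_i$ (the multiplicities of $p_i$ in $\Gamma$) summing to $knr_1$, column sums all equal to $nr_1$, and entry upper bound $r_1+2r_2$; feasibility follows from $a_i \leq k(r_1+2r_2)$ via a Hall-type argument. The most delicate sub-case will be $r_1$ odd with $k$ odd, where odd-length paths in the $2$-regular pieces couple loops and edges, and the parity must be tracked carefully when reversing the loop-pairing.
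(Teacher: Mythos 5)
Your setup (the graph encoding, the reduction to generation by $R_1$, the degree/loop-count identity forcing the edge count, and the toolkit of \cref{petersen}, \cref{1factor}, \cref{2reg} and the bipartite double cover) matches the paper's. But there is a genuine gap at the heart of your plan: you assert that the problem ``reduces to decomposing $\mathcal G_\Gamma = \mathcal G^{(1)}\circ\cdots\circ\mathcal G^{(k)}$ into spanning subgraphs,'' i.e.\ to a purely multiplicative factorization of the monomial $p_\Gamma$. That reduction is false in general, and the paper's proof is organized precisely around its failure. Generation of $R$ by $R_1$ only requires each $p_\Gamma$ to be a \emph{linear combination} of products of lower-degree invariants, and the paper achieves this by repeatedly applying Pl\"ucker relations (e.g.\ $p_{ij}p_{kl}=p_{ik}p_{jl}-p_{il}p_{jk}$ and $p_{ij}p_k=p_{ik}p_j-p_{jk}p_i$) to rewrite $\mathcal G_f$ as a $\mathbb{Z}$-linear combination of $ks$-regular graphs, each of which then admits the desired $s$-factor. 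The obstruction you cannot avoid combinatorially is the odd cycle: when $s=r_1+2r_2$ is odd, the $2$-factors produced by \cref{petersen} (and the spanning subgraph $\Delta$ coming from the double cover) can contain odd cycles, which admit no $1$-factor as subgraphs; the only way out is to merge two odd cycles, or an odd cycle with a loop, via a Pl\"ucker relation, which replaces one monomial by a difference of two others. Your proposal never invokes the relations, so the step ``peel off a remaining odd-parity $1$-factor'' has no justification.

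The same issue undermines your loop-redistribution step. Loops are anchored at specific vertices and each contributes $1$ to the degree of its vertex in whichever factor it is placed, so assigning loop counts to factors is not a free transportation problem with row sums $a_i$ and column sums $nr_1$: a feasible integer matrix need not be realizable by an actual decomposition into regular spanning subgraphs, because the loop placement is coupled to the edge placement through the per-vertex degree constraints. The paper handles this with explicit loop--edge interchanges between the $s$-factor and the $(k-1)s$-factor (swapping $\{(v_i,v_i),(v_j,v_j)\}$ against the edge $(v_i,v_j)$, etc.), and --- crucially --- when no such interchange is available it again falls back on Pl\"ucker relations; the worked example with Figures $1$--$8$ after the proof of \cref{Theo3} exhibits exactly such a configuration. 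To repair your argument you would need to replace both the ``decomposition'' claim and the ``Hall-type'' claim by arguments that operate modulo the straightening relations, at which point you essentially recover the paper's proof.
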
 
  
    \begin{proof}
  Note that $T\backslash\backslash(G/P)_T^{ss}(\mathcal L_{n\varpi}) = Proj({\oplus_{k \in \mathbb{Z}_{\geq 0}}}H^0(G/P,\mathcal{L}^{\otimes k}_{n\varpi})^T)=Proj(\oplus_{k\in \mathbb{Z}_{\geq0}}R_k)$, 
where $R_k=H^0(G/P,\mathcal{L}^{\otimes k}_{n\varpi})^T$. The algebra $R=\oplus_{k \in \mathbb{Z}_{\geq 0}}R_k$ is normal. Here we use induction on $k$ to prove that $R$ is generated by $R_1$ as a $\mathbb{C}$-algebra. We set $s = r_1+2r_2$, $l_1 = n(r_1+r_2)$ and $l_2 = nr_2$.

Let $f = \prod_{t=1}^{kl_2}p_{i_tj_t}\prod_{t=kl_2+1}^{kl_1}p_{m_t} \in R_k$ be a standard monomial in the Pl$\ddot{\mbox{u}}$cker coordinates. We associate a graph $\mathcal{G}_f$ corresponding to $f$ as follows:\\
(a) for each integer $1 \leq i\leq n$, associate a vertex $v_i$,\\
(b) for each $p_{ij}$ appearing in $f$, associate an edge between $v_i$ and $v_j$, and\\
(c) for each $p_k$ appearing in $f$, associate a loop at the vertex $v_k$.\\
Similarly, using the reverse process, we can associate a monomial $f_{\mathcal{G}}$ in Pl\"{u}cker coordinates with a graph $\mathcal{G}$.

For $f \in R_k$, the associated graph $\mathcal{G}_f$ has total $k(l_1-l_2)=knr_1$ number of loops. Since $f \in R_k$, it is $T$ invariant and so each of the indices $1 \leq i \leq n$ appears exactly $ks$-times in the monomial $f$. This results in all the vertices of $\mathcal{G}_f$ having the same degree. Thus $\mathcal{G}_f$ is $ks$-regular.

 Here we introduce some operations on the graphs which are induced by the operations on the monomials corresponding to the graphs:

$  \begin{array}{ccc}
   \mathcal{G} & = \mathcal{G}_1+\mathcal{G}_2 & \text{if $f_{\mathcal{G}} = f_{\mathcal{G}_1}+f_{\mathcal{G}_2}$},\\
    \mathcal{G} & = \mathcal{G}_1-\mathcal{G}_2 & \text{if $f_{\mathcal{G}} = f_{\mathcal{G}_1}-f_{\mathcal{G}_2}$},\\
    \mathcal{G} & = \mathcal{G}_1\circ \mathcal{G}_2 & \text{if $f_{\mathcal{G}} = f_{\mathcal{G}_1}.f_{\mathcal{G}_2}$},\\
    \mathcal{G} & = \mathcal{G}_1\backslash\mathcal{G}_2 & \text{if $f_{\mathcal{G}} = f_{\mathcal{G}_1}/f_{\mathcal{G}_2}$},\\
    \end{array} $
    
    where ${f}_\mathcal{G}$ and ${f}_{\mathcal{G}_i}$ are the monomials associated to the graphs $\mathcal{G}$ and $\mathcal{G}_i$ respectively, for $i = 1, 2$.

 We proceed case by case and in each case we first show that $\mathcal{G}_f$ is a linear combination of $ks$-regular graphs and from each summand we get a $s$-factor. 
%This $s$-factor will correspond to an element of $R_1$. 

  \textbf{Case 1: $r_1$ is even.}

 In this case $s$ is even and the number of loops in the graph is even. So by \cref{petersen} and Remark \ref{rdeg}, $\mathcal{G}_f$ has a $s$-factor.

     \textbf{Case 2: $r_1$ is odd.}

In this case $s$ is odd. 
%Here we may assume that the number of loops in $\mathcal{G}_f$ is greater than or equal to $2$,  otherwise the number of loops is zero, in which case the proof follows from \cref{Theo1}.
 We consider two cases, $k$ is even and $k$ is odd.

\underline{\textbf{$k$ is even.}}
 
In this subcase the number of loops in the graph is even. So  by \cref{petersen}, $\mathcal{G}_f$ can be factored into $\frac{ks}{2}$ number of $2$-factors. We make the following claim. 

\textbf{Claim 1:} One of the $2$-factors can be written as a linear combination of $2$-regular graphs such that from each of the summands we can extract a $1$-factor.

 Since $\mathcal{G}_f$ had at least two loops, by Remark \ref{2reg}, one of the $2$-factors also has at least two loops. Denote this particular $2$-factor (with at least two loops) by $\mathcal{G}_{f^{(2)}}$. 
 
Since $\mathcal{G}_{f^{(2)}}$ is a $2$-regular graph, using Remark \ref{2reg}, $\mathcal{G}_{f^{(2)}}$ is a disjoint union of (i) even cycles, (ii) odd cycles, (iii) even length paths starting with a loop and ending with a loop, (iv) odd length paths starting with a loop and ending with a loop, and (v) vertices with two loops.

 Now, to get the graph free of odd cycles we merge two odd cycles together by taking one edge from each and apply Pl\"{u}cker relations on them.  

 In the following example we use the Pl\"{u}cker relation $p_{13}p_{45}=p_{14}p_{35}-p_{15}p_{34}$ on the edges $(v_1,v_3)$ and $(v_4,v_5)$ to merge two odd cycles. 

\[\begin{tikzpicture}[scale=4, every loop/.style={}]
%	%% Notice in the first vertex is named (v) for the sake of a later edge,
%	%% and it also has a label to its left that is the math-mode $v$. 
	\vertex[fill] (1) at (.2, 0) [label=below:$v_1$]  {};  
	\vertex [fill](2) at (0,.2) [label=left:$v_2$] {};
	\vertex[fill] (3) at (.2,.4) [label=above:$v_3$] {};
	\vertex[fill] (4) at (.4,.4) [label=above:$v_4$] {};	
	\vertex[fill] (5) at (.4, 0) [label=below:$v_5$] {};  
	\vertex [fill](6) at (.6,.2) [label=below:$v_6$][label=right:\hspace{.4cm}\mbox{=}] {};
	\vertex[fill] (7) at (1.2, 0) [label=below:$v_1$]  {};  
	\vertex [fill](8) at (1,.2) [label=below:$v_2$] {};
	\vertex[fill] (9) at (1.2,.4) [label=above:$v_3$] {};
	\vertex[fill] (10) at (1.4,.4) [label=above:$v_4$] {};	
	\vertex[fill] (11) at (1.6, .2) [label=below:$v_6$] [label=right:\hspace{.4cm}\mbox{$-$}] {};  
	\vertex [fill](12) at (1.4,0) [label=below:$v_5$] {};
	\vertex[fill] (13) at (2.2, 0) [label=below:$v_1$] {};  
	\vertex [fill](14) at (2,.2)  [label=below:$v_2$]  {};
	\vertex[fill] (15) at (2.2,.4)  [label=above:$v_3$]{};
	\vertex[fill] (16) at (2.4,.4) [label=above:$v_4$] {};	
	\vertex[fill] (17) at (2.6,.2) [label=right: $v_6$]  {};  
	\vertex [fill](18) at (2.4,0) [label=below:$v_5$] {};	
%	\node[style={circle,draw=black, very thick,inner sep=5pt}] at (42) at (-1,4) {};
% \draw (.6,.2)  to[in=-50,out=-130,loop] (.6,.2);
%   \draw (1.6,0)  to[in=-50,out=-130,loop] (1.6,0);
%    \draw (2,.2)  to[in=-50,out=-130,loop] (2,.2);
%   \draw (2.8,.4)  to[in=-50,out=-130,loop] (2.8,.4);
%   \draw (3.2,.2)  to[in=-50,out=-130,loop] (3.2,.2);
	\path
%	   % Note that the word "path" here isn't used in the graph-theory sense; the \path command
%	   % is always used prior to the list of edges; here, coincidentally, they do form an actual path.
		(1) edge (2)
		(1) edge (3)
		(2) edge (3)
		(4) edge (5)
		(4) edge (6)
		(5) edge (6)
		(7) edge (8)
		(8) edge (9)
		(9) edge (12)
		(7) edge (10)
		(10) edge (11)
		(11) edge (12)
		(13) edge (14)
		(14) edge (15)
		(15) edge (16)
		(13) edge (18)
		(16) edge (17)
		(17) edge (18)	 ;   % This semicolon ends the \path command.
\end{tikzpicture}\]

  Repeating this process we write $\mathcal{G}_{f^{(2)}} = \sum_{i=1}^pa_i\mathcal{G}_{f^{(2)}_i}$, where $a_i \in \mathbb{Z}$ and each $\mathcal{G}_{f^{(2)}_i}$ is a $2$-regular graph which is a disjoint union of (i) even cycles, (ii) even length paths starting with a loop and ending with a loop, (iii) odd length paths starting with a loop and ending with a loop, (iv) vertices with two loops and (v) \textbf{possibly one odd cycle}. 
  
(a) Suppose $\mathcal{G}_{f^{(2)}_i}$ has no odd cycle. We can extract a $1$-factor from $\mathcal{G}_{f^{(2)}_i}$ in the following ways:\\
If $\mathcal{G}_{f^{(2)}_i}$ has an even cycle as a component it can be factored into two $1$-factors by taking every alternate edge.\\
If $\mathcal{G}_{f^{(2)}_i}$ has an even length path starting with a loop and ending with a loop as a component we pick a loop and every alternate edge to get a $1$-factor. \\
If $\mathcal{G}_{f^{(2)}_i}$ has an odd length path stating with a loop and ending with a loop as a component we pick up the two loops and every alternate edge to get a $1$-factor. \\
If $\mathcal{G}_{f^{(2)}_i}$ has a vertex with two loops as a component we take one loop from it.

(b) Suppose $\mathcal{G}_{f^{(2)}_i}$ has an odd cycle. Since $\mathcal{G}_{f^{(2)}}$ has at least two loops, $\mathcal{G}_{f^{(2)}_i}$ will also have at least two loops. To get the graph free of the odd cycle we choose an edge $(v_i,v_j)$ in the odd cycle, and a loop $(v_k,v_k)$ (w.l.o.g  $\{i,j,k:i<j<k\}$) and apply the Pl\"{u}cker relation $p_{ij}p_k=p_{ik}p_j - p_{jk}p_i$.\\
We may take an odd cycle and one of the components of the following types to apply Pl\"{u}cker relation:\\
a.  vertex with two loops.\\
b. even length path starting with a loop and ending with a loop.\\
c. odd length path starting with a loop and ending with a loop.\\
In the following examples the Pl\"{u}cker relation $p_{13}p_4=p_{14}p_3-p_{34}p_1$ is applied on the edge $(v_1,v_3)$ and the loop $(v_4,v_4)$.\\ 

\[\begin{tikzpicture}[scale=3, every loop/.style={}]
	%% Notice in the first vertex is named (v) for the sake of a later edge,
	%% and it also has a label to its left that is the math-mode $v$. 
	\vertex[fill] (1) at (.5, 0)  [label=below: $v_1$] {};  
	\vertex [fill](2) at (0,.2) [label=left: $v_2$] {};
	\vertex[fill] (3) at (.5,.4) [label=above: $v_3$]{};
	\vertex[fill] (4) at (1,.2) [label=right: $v_4$ \hspace{.2cm} \mbox{$=$}] [label=below: 2] {};	
	\vertex[fill] (5) at (2.2, 0) [label=below: $v_1$]  {};  
	\vertex [fill](6) at (1.7,.2) [label=left: $v_2$] {};
	\vertex[fill] (7) at (2.2,.4)[label=above: $v_3$] [label=below: 1]{};
	\vertex[fill] (8) at (2.7,.2)[label=right: $v_4$ \hspace{.2cm} \mbox{$-$}] [label=below: 1] {};
	\vertex[fill] (9) at (3.9, 0) [label=right: $v_1$] [label=below: 1] {};  
	\vertex [fill](10) at (3.4,.2) [label=left: $v_2$] {};
	\vertex[fill] (11) at (3.9,.4) [label=above: $v_3$] {};
	\vertex[fill] (12) at (4.4,.2) [label=right: $v_4$][label=below: 1] {};		
%	\node[style={circle,draw=black, very thick,inner sep=5pt}] at (42) at (-1,4) {};
  \draw (1,.2)  to[in=-50,out=-130,loop] (.6,.2);
   \draw (2.2,.4)  to[in=-50,out=-130,loop] (2.2,.4);
    \draw (2.7,.2)  to[in=-50,out=-130,loop] (2.7,.2);
   \draw (3.9,0)  to[in=-50,out=-130,loop] (3.9,0);
   \draw (4.4,.2)  to[in=-50,out=-130,loop] (3.2,.2);
	\path
	   % Note that the word "path" here isn't used in the graph-theory sense; the \path command
	   % is always used prior to the list of edges; here, coincidentally, they do form an actual path.
		(1) edge (2)
		(1) edge (3)
		(2) edge (3)
		(5) edge (6)
		(6) edge (7)
		(5) edge (8)
		(9) edge (10)
		(10) edge (11)
		(11) edge (12)
	 ;   % This semicolon ends the \path command.
\end{tikzpicture}\]

\vspace{-1cm}
\[\begin{tikzpicture}[scale=3, every loop/.style={}]
	%% Notice in the first vertex is named (v) for the sake of a later edge,
	%% and it also has a label to its left that is the math-mode $v$. 
	\vertex[fill] (1) at (.4, 0) [label=below: $v_1$]  {};  
	\vertex [fill](2) at (0,.2) [label=left: $v_2$] {};
	\vertex[fill] (3) at (.4,.4)[label=above: $v_3$] {};
	\vertex[fill] (4) at (.8,0) [label=right: $v_6$][label=below: 1] {};	
	\vertex[fill] (5) at (1.2,.2) [label=right: $v_5$ \hspace{.15cm} \mbox{$=$}]  {};  
	\vertex [fill](6) at (.8,.4) [label=above: $v_4$][label=below: 1] {};
	\vertex[fill] (7) at (2.4,0) [label=below: $v_1$] {};
	\vertex[fill] (8) at (2,.2) [label=below: $v_2$] {};
	\vertex[fill] (9) at (2.4,.4)[label=above: $v_3$] [label=below: 1]  {};  
	\vertex [fill](10) at (2.8,0) [label=above: $v_6$][label=below: 1] {};
	\vertex[fill] (11) at (3.2,.2) [label=right: $v_5$ \hspace{.15cm} \mbox{$-$}] {};
	\vertex[fill] (12) at (2.8,.4) [label=above: $v_4$] {};
	\vertex[fill] (13) at (4.4, 0) [label=above: $v_1$][label=below: 1]  {};  
	\vertex [fill](14) at (4,.2) [label=left: $v_2$] {};
	\vertex[fill] (15) at (4.4,.4) [label=above: $v_3$]{};
	\vertex[fill] (16) at (4.8,0) [label=above: $v_6$][label=below: 1] {};	
	\vertex[fill] (17) at (5.2,.2) [label=right: $v_5$] {};  
	\vertex [fill](18) at (4.8,.4) [label=above: $v_4$] {};		
%	\node[style={circle,draw=black, very thick,inner sep=5pt}] at (42) at (-1,4) {};
  \draw (.8,0)  to[in=-50,out=-130,loop] (.8,0);
   \draw (.8,.4)  to[in=-50,out=-130,loop] (.8,.4);
    \draw (2.4,.4)  to[in=-50,out=-130,loop] (2.4,.4);
   \draw (2.8,0)  to[in=-50,out=-130,loop] (2.8,0);
   \draw (4.4,0)  to[in=-50,out=-130,loop] (4.4,0);
   \draw (4.8,0)  to[in=-50,out=-130,loop] (4.8,0);
	\path
	   % Note that the word "path" here isn't used in the graph-theory sense; the \path command
	   % is always used prior to the list of edges; here, coincidentally, they do form an actual path.
	   (1) edge (2)
		(1) edge (3)
		(2) edge (3)
		(4) edge (5)
		(5) edge (6)
		(7) edge (8)
		(9) edge (8)
		(10) edge (11)
		(11) edge (12)
		(7) edge (12)
		(13) edge (14)
		(14) edge (15)
		(15) edge (18)
		(17) edge (18)
		(16) edge (17)
	 ;   % This semicolon ends the \path command.
\end{tikzpicture}\]

\[\begin{tikzpicture}[scale=3, every loop/.style={}]
	%% Notice in the first vertex is named (v) for the sake of a later edge,
	%% and it also has a label to its left that is the math-mode $v$. 
	\vertex[fill] (1) at (.4,0) [label=below: $v_1$]  {};  
	\vertex [fill](2) at (0,.2) [label=left: $v_2$] {};
	\vertex[fill] (3) at (.4,.4)[label=above: $v_3$] {};
	\vertex[fill] (4) at (.8,0)[label=left: $v_7$] [label=below: 1] {};	
	\vertex[fill] (5) at (1.2,0)  [label=right: $v_5$] {};  
	\vertex [fill](6) at (.8,.4) [label=above: $v_4$][label=below: 1] {};
	\vertex[fill] (19) at (1.2,.4) [label=above: $v_6$]  {};
	\vertex[fill] (7) at (2.4,0) [label=below: $v_1$]{};
	\vertex[fill] (8) at (2,.2) [label=left: \mbox{$=$} \hspace{.2cm} $v_2$ ] {};
	\vertex[fill] (9) at (2.4,.4) [label=above: $v_3$][label=below: 1]  {};  
	\vertex [fill](10) at (2.8,0) [label=left: $v_7$][label=below: 1] {};
	\vertex[fill] (11) at (3.2,0) [label=right: $v_5$] {};
	\vertex[fill] (20) at (3.2,.4)[label=above: $v_6$] {};
	\vertex[fill] (12) at (2.8,.4) [label=above:$v_4$] {};
	\vertex[fill] (13) at (4.4, 0)[label=right: $v_1$] [label=below: 1]  {};  
	\vertex [fill](14) at (4,.2)[label=left: \mbox{$-$} \hspace{.2cm} $v_2$]  {};
	\vertex[fill] (15) at (4.4,.4)[label=above: $v_3$] {};
	\vertex[fill] (16) at (4.8,0) [label=above: $v_7$][label=below: 1] {};	
	\vertex[fill] (17) at (5.2,0) [label=right: $v_5$] {};  
	\vertex [fill](18) at (4.8,.4) [label=above: $v_4$] {};
	\vertex[fill](21)	at (5.2,.4) [label=above: $v_6$]{};	
%	\node[style={circle,draw=black, very thick,inner sep=5pt}] at () at (-1,4) {};
  \draw (.8,0)  to[in=-50,out=-130,loop] (.8,0);
   \draw (.8,.4)  to[in=-50,out=-130,loop] (.8,.4);
    \draw (2.4,.4)  to[in=-50,out=-130,loop] (2.4,.4);
   \draw (2.8,0)  to[in=-50,out=-130,loop] (2.8,0);
   \draw (4.4,0)  to[in=-50,out=-130,loop] (4.4,0);
   \draw (4.8,0)  to[in=-50,out=-130,loop] (4.8,0);
	\path
	   % Note that the word "path" here isn't used in the graph-theory sense; the \path command
	   % is always used prior to the list of edges; here, coincidentally, they do form an actual path.
	   (1) edge (2)
		(1) edge (3)
		(2) edge (3)
		(4) edge (5)
		(5) edge (19)
		(6) edge (19)
		(7) edge (8)
		(9) edge (8)
		(10) edge (11)
		(11) edge (20)
		(12) edge (20)
		(7) edge (12)
		(13) edge (14)
		(14) edge (15)
		(15) edge (18)
		(17) edge (21)
		(16) edge (17)
		(18) edge (21)
	 ;   % This semicolon ends the \path command.
\end{tikzpicture}\]

  After doing this we write $\mathcal{G}_{f^{(2)}_i} = \sum_{k=1}^{m_i} b_{i_k}\mathcal{G}_{f^{(2)}_{i_k}}$, where $b_{i_k} \in \mathbb{Z}$ and each $\mathcal{G}_{f^{(2)}_{i_k}}$ is a $2$-regular graph which is a disjoint union of (i) even cycles, (ii) even length paths starting with a loop and ending with a loop, (iii) odd length paths starting with a loop and ending with a loop, (iv) vertices with two loops. So, from each of the components of $\mathcal{G}_{f^{(2)}_{i_k}}$ we can extract a $1$-factor $\mathcal{G}_{f^{(2)}_{i_k,1}}$ as explained above. 
%  Thus from $G$ we can extract a $1$-factor and the claim is proved.
\vspace{0.5mm}

$\mathcal{G}_f = (\mathcal{G}_f\backslash\mathcal{G}_{f^{(2)}}) \circ \mathcal{G}_{f^{(2)}} = (\mathcal{G}_f\backslash\mathcal{G}_{f^{(2)}}) \circ \sum_{i=1}^p\sum_{k=1}^{m_i} a_ib_{i_k}\mathcal{G}_{f^{(2)}_{i_k}} = \sum_{i=1}^p\sum_{k=1}^{m_i}a_ib_{i_k}(\mathcal{G}_f\backslash\mathcal{G}_{f^{(2)}}) \circ \mathcal{G}_{f^{(2)}_{i_k}}) = \sum_{i=1}^p\sum_{k=1}^{m_i}a_ib_{i_k} \mathcal{G}_{f_{i_k}{''}}$, where each $\mathcal{G}_{f_{i_k}{''}}$ is a $ks$-regular graph and for each $\mathcal{G}_{f_{i_k}{''}}$ we get a $s$-factor by combining any $\frac{s-1}{2}$ number of $2$-factors of $(\mathcal{G}_f\backslash\mathcal{G}_{f^{(2)}}) \circ (\mathcal{G}_{f^{(2)}_{i_k}} \backslash \mathcal{G}_{f^{(2)}_{i_k,1}})$ (which is a $ks-1$-regular graph with even loops) with the $1$-factor  $\mathcal{G}_{f^{(2)}_{i_k,1}}$.

% So, $\mathcal{G}_f$ can be written as a linear combination of $ks$-regular graphs and for each of the summands in $\mathcal{G}_f$ we get a $s$-factor by combining any $\frac{s-1}{2}$ number of $2$-factors of $\mathcal{G}_f \backslash \mathcal{G}_{f^{(2)}}$ and one $1$-factor of $\mathcal{G}_{f^{(2)}_{i_k}}$.

\underline{\textbf{$k$ is odd and $n$ is even.}}

%\underline{\textbf{.}}

In this case $\mathcal{G}_f$ is $ks$-regular with even number of vertices and even number of loops. 
We form a new graph $\mathcal{G}_{\tilde{f}}$ by doubling the vertex set: for each vertex $v_i$ we associate two vertices $M_i$ and $N_i$, i.e., the vertex set of $\mathcal{G}_{\tilde{f}}$ is: 
$$\textrm{Vert}(\mathcal{G}_{\tilde{f}})=\{M_1,\ldots,M_n,N_1,\ldots,N_n\}.$$ 
 For each edge $(v_i,v_j)$ of $\mathcal{G}$, we associate two edges $(M_i,N_j)$ and $(M_j,N_i)$ in $\mathcal{G}_{\tilde{f}}$. For each loop $(v_i,v_i)$, we associate an edge $(M_i,N_i)$ in $\mathcal{G}_{\tilde{f}}$. Note that $\mathcal{G}_{\tilde{f}}$ is $ks$-regular and bi-partite between $M$ and $N$. So, by \cref{1factor}, it has a 1-factor, say $\tilde{\Delta}$, in $\mathcal{G}_{\tilde{f}}$.
 
From $\tilde{\Delta}$ we construct another graph $\Delta$ as follows:\\
(a) $\Delta$ has $n$ vertices, denoted by $\{1,2,\ldots,n\}$.\\
(b) for each edge $(M_i,N_j)$ in $\tilde{\Delta}$, we associate an edge $(i,j)$ in $\Delta$.\\
(c) for each edge $(M_i,N_i)$ in $\tilde{\Delta}$, we associate a loop $(i,i)$ in $\Delta$.\\
Note that the loops are disjoint components in $\Delta$ and the remaining graph ($\Delta\backslash$ \{loops\}) is $2$-regular consisting of cycles.
%So, $\Delta$ is almost a subgraph of $G$.
However, we may have both $(M_i,N_j)$ and $(M_j,N_i)$ are edges of $\tilde{\Delta}$. This may result in two occurrences of the edge $(i,j)$ in $\Delta$ but only one in $\mathcal{G}_f$. So, this type of component is a $2$-cycle. Note that $\Delta \backslash$ \{$2$-cycles\} is a subgraph of $\mathcal{G}_f$. If we pick $1$-factor from each of the $2$-cycles then the graph obtained by taking union of $ \Delta \backslash \{2\mbox{-cycles}\} $ and the chosen $1$-factors of $2$-cycles is a spanning subgraph of $\mathcal{G}_f$ and we denote it by $\mathcal{G}_{f'}$. Now we apply Pl\"{u}cker relations to write $\mathcal{G}_{f'}$ as a linear combination of graphs such that from each of the summands we can extract a $1$-factor in the following way and in each case we get a $s$-factor from each of the summands of $\mathcal{G}_f$.

(1) If $\mathcal{G}_{f'}$ has some loops then we consider two cases:

(a) If $\mathcal{G}_{f'}$ has even number of odd cycles then $\mathcal{G}_{f'}$ has even number of loops. Now we use Pl\"{u}cker relations repeatedly to merge two odd cycles into an even cycle and write $\mathcal{G}_{f'} =\sum_i a_i\mathcal{G}_{f'_{i}}$, $a_i \in \mathbb{Z}$, where $\mathcal{G}_{f'_{i}}$ is a disjoint union of even cycles, loops and $1$-factors of $2$-cycles of $\mathcal{G}_{f'}$. Now we can extract a $1$-factor $\mathcal{G}_{f'_{i,1}}$ from each $\mathcal{G}_{f'_{i}}$ as explained above.
%Now since each $\mathcal{H}_f^{i}$ has even number of loops, each $1$-factor $\mathcal{H}_{f,1}^{i}$ has also even number of loops.

%\textcolor{red}{Now we combine $\mathcal{G}_f\backslash\mathcal{H}_f$ with $\mathcal{H}_f^{i} \backslash \mathcal{H}_{f,1}^{i}$ and we get a $ks-1$-regular graph with even loops.}
%% Thus from each of the summand of $\mathcal{G}_f$ we get a $1$-factor which has even number of loops. Thus after extracting the $1$-factor from each summand of $\mathcal{G}_f$, the remaining summand is a $(ks-1)$-regular graph with even number of loops.
% \textcolor{red}{Since $ks-1$ is even so we use \cref{petersen} to get a $(s-1)$-factor. Thus combining this $(s-1)$-factor with $\mathcal{H}_{f,1}^{i}$ we get a $s$-factor of each summand.}
 
  $\mathcal{G}_f = (\mathcal{G}_f\backslash\mathcal{G}_{f'}) \circ \mathcal{G}_{f'} = (\mathcal{G}_f\backslash\mathcal{G}_{f'}) \circ \sum_i a_i\mathcal{G}_{f'_{i}} = \sum_i a_i((\mathcal{G}_f\backslash\mathcal{G}_{f'}) \circ \mathcal{G}_{f'_{i}}) = \sum_i a_i \mathcal{G}_{f_i{''}}$, where each $\mathcal{G}_{f_i{''}}$ is a $ks$-regular graph and for each $\mathcal{G}_{f_i{''}}$ we get a $s$-factor by combining any $\frac{s-1}{2}$ number of $2$-factors of $(\mathcal{G}_f\backslash\mathcal{G}_{f'}) \circ (\mathcal{G}_{f'_{i}} \backslash \mathcal{G}_{f'_{i,1}})$ (which is a $ks-1$-regular graph with even loops) with the $1$-factor $\mathcal{G}_{f'_{i,1}}$.

(b) If $\mathcal{G}_{f'}$ has odd number of odd cycles then $\mathcal{G}_{f'}$ also has odd number of loops. Now we use Pl\"{u}cker relations repeatedly to merge two odd cycles into an even cycle and write $\mathcal{G}_{f'}=\sum_i a_i\mathcal{G}_{f'_i}$, $a_i \in \mathbb{Z}$ where $\mathcal{G}_{f'_i}$ is a disjoint union of even cycles, loops, one odd cycle and $1$-factors of $2$-cycles of $\mathcal{G}_{f'}$. 

 Since $\mathcal{G}_{f'}$ has at least one loop, each $\mathcal{G}_{f'_i}$ also has at least one loop. So to get $\mathcal{G}_{f'_i}$ free of the odd cycle we apply Pl\"{u}cker relation on an edge of the odd cycle and one of the loops to write $\mathcal{G}_{f'_i}=\sum_{k} b_{i_k}\mathcal{G}_{f'_{i_k}}$, $b_{i_k} \in \mathbb{Z}$, where $\mathcal{G}_{f'_{i_k}}$ is a disjoint union of even cycles, loops (even in number) and one odd path starting with a loop and $1$-factors of $2$-cycles of $\mathcal{G}_{f'}$. We now extract a $1$-factor from the odd path by taking alternate edges and we extract $1$-factors from the other components of the linear combination as explained above. Thus we extract a $1$-factor $\mathcal{G}_{f'_{i_k,1}}$ from each $\mathcal{G}_{f'_{i_k}}$. 

%Now we combine $\mathcal{G}_f\backslash\mathcal{H}_f$ with $\mathcal{H}_f^{i_k} \backslash \mathcal{H}_{f,1}^{i_k}$ and we get a $ks-1$-regular graph with even loops.

 In this case $\mathcal{G}_f$ can be written as a linear combination of $ks$-regular graphs and for each of the summands in $\mathcal{G}_f$ we can get a $s$-factor as explained above.

%\textcolor{red}{So, $\mathcal{G}_f$ can be written as a linear combination of $ks$-regular graphs and for each of the summands in $\mathcal{G}_f$ we get a $s-1$-factor $\mathcal{G}_f^{i}$ by combining any $\frac{s-1}{2}$ number of $2$-factors of $(\mathcal{G}_f\backslash\mathcal{H}_f) \circ (\mathcal{H}_f^{i_k} \backslash \mathcal{H}_{f,1}^{i_k})$ (which is a $ks-1$-regular graph with even loops). Now $\mathcal{G}_f^{i} \circ \mathcal{H}_{f,1}^{i_k}$ is a $s$-factor of each of the summands of $\mathcal{G}_f$.}

%\textcolor{red}{ Since $ks-1$ is even we use \cref{petersen} to get a $(s-1)$-factor. Thus combining this $(s-1)$-factor with $\mathcal{H}_{f,1}^{i_k}$ we get a $s$-factor of each of the summand.}

 (2) If $\mathcal{G}_{f'}$ does not contain any loop then since the number of vertices is even, there are even number of odd cycles in $\mathcal{G}_{f'}$. Now we use Pl\"{u}cker relations repeatedly to merge two odd cycles into an even cycle and write $\mathcal{G}_{f'}=\sum_i a_i\mathcal{G}_{f'_i}$, $a_i \in \mathbb{Z}$ where $\mathcal{G}_{f'_i}$ is a disjoint union of even cycles and $1$-factors of $2$-cycles of $\mathcal{G}_{f'}$. We then extract a $1$-factor from each $\mathcal{G}_{f'_i}$ as explained above.
 
%  So we get a $1$-factor $\mathcal{H}_{f,1}^{i_k}$ of $\mathcal{H}_f^{i}$ which does not contain any loop.}
%  Now we combine $\mathcal{G}_f\backslash\mathcal{H}_f$ with $\mathcal{H}_f^{i} \backslash \mathcal{H}_{f,1}^{i}$ and we get a $ks-1$-regular graph with even number of loops. Now we use \cref{petersen} to get a $(s-1)$-factor. Thus combining this $(s-1)$-factor with $\mathcal{H}_{f,1}^{i}$ we get a $s$-factor.
 
 So, $\mathcal{G}_f$ can be written as a linear combination of $ks$-regular graphs and for each of the summands in $\mathcal{G}_f$ we can get a $s$-factor as explained above.
 
%  { we get a $s$-factor by combining any $\frac{s-1}{2}$ number of $2$-factors of the combination $\mathcal{G}_f\backslash\mathcal{H}_f$ with $\mathcal{H}_f^{i} \backslash \mathcal{H}_{f,1}^{i}$ (which is a $ks-1$-regular graph with even loops) and the $1$-factor $\mathcal{H}_{f,1}^{i}$.}

 \underline{\textbf{$k$ and $n$ both are odd.}}
 
 In this case $\mathcal{G}_f$ is a $ks$-regular graph with odd number of loops. As in the case where $k$ is odd and $n$ is even in this case also we get a bipartite graph, with bipartitions $M$ and $N$, which is $1$-factorable. Note that one of the factors contains odd number of edges of type $(M_i, N_i)$. So the associated graph $\Delta$ contains odd number of loops. Note that $\Delta \backslash \{\mbox{2-cycles}\}$ is a subgraph of $\mathcal{G}_f$. If we pick a $1$-factor from each of the $2$-cycles then the graph obtained by taking union of $\Delta\backslash\{\mbox{2-cycles}\}$ and the chosen $1$-factors of $2$-cycles is a spanning subgraph of $\mathcal{G}_f$ and we denote it by $\mathcal{G}_{f'}$. Note that $\mathcal{G}_{f'}$ is a disjoint union of even number of odd cycles, even cycles, odd number of loops and $1$-factors of $2$-cycles. Now we apply Pl\"{u}cker relations repeatedly to merge two odd cycles into an even cycle and write $\mathcal{G}_{f'} = \sum a_i\mathcal{G}_{f'_i}$, where $a_i \in \mathbb{Z}$ and each $\mathcal{G}_{f'_i}$ is a disjoint union of even cycles, loops and $1$-factors of $2$-cycles. Then we can extract a $1$-factor $\mathcal{G}_{f'_{i,1}}$ from each $\mathcal{G}_{f'_i}$ as explained above. 
% Now we combine $\mathcal{G}_f\backslash\mathcal{H}_f$ with $\mathcal{H}_f^{i} \backslash \mathcal{H}_{f,1}^{i}$ and we get a $ks-1$-regular graph with even number of loops. Now we use \cref{petersen} to get a $(s-1)$-factor. Thus combining this $(s-1)$-factor with $\mathcal{H}_{f,1}^{i}$ we get a $s$-factor.

 In this case also $\mathcal{G}_f$ can be written as a linear combination of $ks$-regular graphs and for each of the summands in $\mathcal{G}_f$ we can get a $s$-factor as explained above.
 
%\textcolor{red}{So, $\mathcal{G}_f$ can be written as a linear combination of $ks$-regular graphs and for each of the summands in $\mathcal{G}_f$ we get a $s$-factor by combining any $\frac{s-1}{2}$ number of $2$-factors of the combination $\mathcal{G}_f\backslash\mathcal{H}_f$ with $\mathcal{H}_f^{i} \backslash \mathcal{H}_{f,1}^{i}$ (which is a $ks-1$-regular graph) and the $1$-factor $\mathcal{H}_{f,1}^{i}$.} 
 
% \textcolor{red}{Since $ks-1$ is even we use \cref{petersen} to get a $(s-1)$-factor. Thus, combining this $(s-1)$-factor with the obtained $1$-factor we get a $s$-factor.}

Now using the $s$-factors that we have obtained in each of the above cases we will get a $s$-factor with $nr_1$ number of loops with which the associated monomial lies in $R_1$.  

 We interchange loops and edges between the $s$-factor and the $(k-1)s$-factor without interchanging the degree of the vertices so that the monomial associated to the new $s$-factor lies in $R_1$. For example, (1) if $(v_i,v_i)$ and $(v_j,v_j)$ are two loops in the $s$-factor and $(v_i,v_j)$ is an edge in the $(k-1)s$-factor then we can interchange them and (2) if $\{(v_i,v_j), (v_k,v_k), (v_l,v_l)\}$ is a set of an edge and two loops in the $s$-factor and $\{(v_i,v_k), (v_j,v_l)\}$ is a set of edges in the $(k-1)s$-factor then we can interchange them. We shall do this interchange for all possible loops and edges in the $s$-factor and the $(k-1)s$-factor.
 
  If interchange between loops and edges is not possible then we use Pl\"{u}cker relation on the factors repeatedly (possibly multiple times) to get a set of graphs where interchange between loops and edges is possible. The Pl\"{u}cker relation on the edge $(v_i,v_j)$ and loop $(v_k,v_k)$ (w.l.o.g we take  $\{i,j,k:i<j<k\}$), is $p_{ij}p_k=p_{ik}p_j - p_{jk}p_i$, and the Pl\"{u}cker relation on two edges $(v_i,v_j)$ and $(v_k,v_l)$ (w.l.o.g $\{i,j,k,l: i < j < k < l\}$), is $p_{ij}p_{kl}=p_{ik}p_{jl} - p_{il}p_{jk}$. We illustrate this possibility by an example given below.
 
  Now using induction on the number of loops we get a $s$-factor with which the associated monomial lies in $R_1$.
  
  So we conclude that $R$ is generated by $R_1$. Hence, the quotient $T\backslash\backslash(G/P)_T^{ss}(\mathcal L_{n\varpi})$ is projectively normal with respect to the descent of the line bundle $\mathcal{L}_{n\varpi}$.
    \end{proof}
    
 Here we give an example where interchange between loops and edges in the above theorem is not possible. Then we use Pl\"{u}cker relation on the factors to get a set of graphs where interchange between loops and edges is possible. Let us consider $\lambda= 6(\varpi_1+2\varpi_2)$ and $k=3$. So $s = 5$ and $nr_1= 6$. After using the above procedure, suppose we get a $5$-factor with which the associated monomial is\\
\centerline{$p_{12}^2p_{14}^3p_{24}^2p_{25}p_{35}^4p_{36}p_{6}^4$ (Figure $4$),}
and another graph which is a $10$-factor, with which the associated monomial is\\ \centerline{$p_{12}^2p_{13}^5p_{14}^3p_{24}^7p_{25}p_{35}^5p_{5}^4
p_{6}^{10}$ (Figure $1$).}
Here directly we can not interchange loops and edges between the factors. So we apply Pl\"{u}cker relation on the edge $(v_2,v_4)$ and the loop $(v_5,v_5)$ in Figure $1$, and obtain\\
\centerline{$p_{12}^2p_{13}^5p_{14}^3p_{24}^7p_{25}p_{35}^5p_{5}^4
p_{6}^{10}=p_{12}^2p_{13}^5p_{14}^3p_{24}^6p_{25}^2
p_{35}^5p_{5}^3p_4p_{6}^{10}-p_{12}^2p_{13}^5p_{14}^3p_{24}^6p_{25}p_{35}^5p_{45}
p_2p_{5}^3p_{6}^{10}$.}
The graph associated with the monomial $p_{12}^2p_{13}^5p_{14}^3p_{24}^6p_{25}^2
p_{35}^5p_{5}^3p_4p_{6}^{10}$ is in Figure 2, and the graph associated with the monomial $p_{12}^2p_{13}^5p_{14}^3p_{24}^6p_{25}p_{35}^5p_{45}
p_2p_{5}^3p_{6}^{10}$ is in Figure 3.

Now we can do the interchange as follows:
 
\[\begin{tikzpicture}[scale=3, every loop/.style={}]

    \vertex[fill] (1) at (0, 1)  [label=above: $v_1$] {}; 
 \vertex [fill](2) at (.5,1.5) [label=left: $v_2$] {};
\vertex[fill] (3) at (1,.3) [label=left: $v_3$]{};
  \vertex[fill] (4) at (.7,0) [label=right: $v_4$][label=below: \vspace{1cm}\mbox{Figure $1$}] {};   
    \vertex[fill] (5) at (1, 1) [label=above: $v_5$][label=right: 4 \hspace{.5cm}\mbox{$\textbf{=}$}]  {};  
    \vertex [fill](6) at (1.3,.5) [label=left: $v_6$][label=below: 10] {};
    \vertex[fill] (7) at (1.6, 1)  [label=above: $v_1$] {}; 
 \vertex [fill](8) at (2.1,1.5) [label=left: $v_2$] {};
\vertex[fill] (9) at (2.6,.3) [label=left: $v_3$]{};
  \vertex[fill] (10) at (2.3,0) [label=right:\vspace{.2cm} 1][label=left: $v_4$][label=below: \vspace{1cm}\mbox{Figure $2$}] {};   
    \vertex[fill] (11) at (2.6, 1) [label=above: $v_5$][label=right:3 \hspace{.5cm}\mbox{$\textbf{-}$}]  {};  
    \vertex [fill](12) at (2.9,.5) [label=left: $v_6$][label=below: 10] {}; 
 \vertex[fill] (13) at (3.2, 1)  [label=above: $v_1$] {}; 
 \vertex [fill](14) at (3.7,1.5) [label=left: $v_2$] [label=above: 1]{};
\vertex[fill] (15) at (4.2,.3) [label=below: $v_3$]{};
  \vertex[fill] (16) at (3.9,0) [label=right: $v_4$][label=below: \vspace{1cm}\mbox{Figure $3$}] {};   
    \vertex[fill] (17) at (4.2, 1) [label=above: $v_5$][label=right:3]  {};  
    \vertex [fill](18) at (4.5,.5) [label=left: $v_6$][label=below: 10] {};     
 \draw (1.3,.5)  to[in=-50,out=-130,loop] (5,1);
 \draw (2.9,.5)  to[in=-50,out=-130,loop] (5,1);
   \draw (2.3,0)  to[in=20,out=-30,loop] (5,1);
  \draw (2.6,1)  to[in=30,out=-70,loop] (2.6,1);
  \draw (1,1)  to[in=30,out=-70,loop] (1,1);
  \draw (3.7,1.5)  to[in=130,out=10,loop] (4,1);
  \draw (4.2,1)  to[in=30,out=-70,loop] (4.2,1);
  \draw (4.5,.5)  to[in=-50,out=-130,loop] (4.5,.5);
    \path

        (1) edge node [below] {5} (3)
        (1) edge node [below] {2} (2)
        (1) edge node [below] {3} (4)
        (2) edge node [right] {7} (4)
        (2) edge node [above] {1} (5)
        (5) edge node [right] {5} (3)
        %(3) edge node [right] {1} (6)
        (7) edge node [below] {2} (8)
        (7) edge node [below] {5} (9)
        (7) edge node [below] {3} (10)
        (8) edge node [right] {6} (10)
        (8) edge node [right] {2} (11)
        (9) edge node [right] {5} (11)
        (13) edge node [below] {5} (15)
        (13) edge node [below] {2} (14)
        (13) edge node [below] {3} (16)
        (14) edge node [right] {6} (16)
        (14) edge node [above] {1} (17)
        (17) edge node [right] {5} (15)
        (16) edge node [above] {1} (17)
        ;  
        \end{tikzpicture}\]
      
\[\begin{tikzpicture}[scale=3, every loop/.style={}]

    \vertex[fill] (1) at (0, 1)  [label=above: 1] {}; 
 \vertex [fill](2) at (.5,1.5) [label=left: 2] {};
\vertex[fill] (3) at (1,.3) [label=left: 3]{};
  \vertex[fill] (4) at (.7,0) [label=left: 4][label=below: \vspace{1cm}\mbox{Figure $4$}] {};   
    \vertex[fill] (5) at (1, 1) [label=right: 5 \hspace{.7cm}\mbox{$\textbf{+}$}]  {};  
    \vertex [fill](6) at (1.3,.5) [label=left: 6][label=below: 4] {};
   \vertex[fill] (7) at (2, 1)  [label=above: 1] {}; 
 \vertex [fill](8) at (2.5,1.5) [label=left: 2] {};
\vertex[fill] (9) at (3,.3) [label=left: 3]{};
  \vertex[fill] (10) at (2.7,0) [label=left: 4][label=right:\hspace{.2cm} 1][label=below: \vspace{1cm}\mbox{Figure $2$}] {};   
    \vertex[fill] (11) at (3, 1) [label=above: 5][label=right:3 \hspace{.5cm}\mbox{$\textbf{=}$}]  {};  
    \vertex [fill](12) at (3.3,.5) [label=left: 6][label=below: 10] {}; 
    \draw (1.3,.5)  to[in=-50,out=-130,loop] (1.5,1); 
 \draw (3.3,.5)  to[in=-50,out=-130,loop] (5,1);
 \draw (2.7,0)  to[in=20,out=-30,loop] (5,1);
 \draw (3,1)  to[in=30,out=-70,loop] (3,1);
    \path

        (1) edge node [below] {3} (4)
        (1) edge node [below] {2} (2)
        (2) edge node [right] {2} (4)
        (2) edge node [above] {1} (5)
        (5) edge node [right] {4} (3)
        (3) edge node [below] {1} (6)
        (7) edge node [below] {2} (8)
        (7) edge node [below] {5} (9)
        (7) edge node [below] {3} (10)
        (8) edge node [right] {6} (10)
        (8) edge node [right] {2} (11)
        (9) edge node [right] {5} (11)
        ;  
\end{tikzpicture}\]

\[\begin{tikzpicture}[scale=3, every loop/.style={}]

    \vertex[fill] (1) at (0, 1)  [label=above: $v_1$] {}; 
 \vertex [fill](2) at (.5,1.5) [label=left: $v_2$] {};
\vertex[fill] (3) at (1,.3) [label=left: $v_3$]{};
  \vertex[fill] (4) at (.7,0) [label=right: \hspace{.2cm}1][label=left: $v_4$][label=below:  \mbox{Figure $5$}] {};   
    \vertex[fill] (5) at (1, 1) [label=right: $v_5$ \hspace{.7cm}\mbox{$\textbf{+}$}]  {};  
    \vertex [fill](6) at (1.3,.5) [label=left: $v_6$][label=below: 5] {};
   \vertex[fill] (7) at (2, 1)  [label=above: $v_1$] {}; 
 \vertex [fill](8) at (2.5,1.5) [label=left: $v_2$] {};
\vertex[fill] (9) at (3,.3) [label=left: $v_3$]{};
  \vertex[fill] (10) at (2.7,0) [label=right: $v_4$][label=below: \vspace{1cm}\mbox{Figure $6$}] {};   
    \vertex[fill] (11) at (3, 1) [label=above: $v_5$][label=right:3]  {};  
    \vertex [fill](12) at (3.3,.5) [label=left: $v_6$][label=below: 9] {}; 
    \draw (1.3,.5)  to[in=-50,out=-130,loop] (1.5,1); 
    \draw (.7,0)  to[in=20,out=-30,loop] (.7,0);
 \draw (3.3,.5)  to[in=-50,out=-130,loop] (5,1);
 %\draw (2.7,0)  to[in=-50,out=-130,loop] (5,1);
 \draw (3,1)  to[in=30,out=-70,loop] (3,1);
    \path

        (1) edge node [below] {2} (4)
        (1) edge node [below] {2} (2)
        (2) edge node [right] {2} (4)
        (2) edge node [above] {1} (5)
        (5) edge node [right] {4} (3)
        (1) edge node [below] {1} (3)
        (7) edge node [below] {2} (8)
        (7) edge node [below] {4} (9)
        (7) edge node [below] {4} (10)
        (8) edge node [right] {6} (10)
        (8) edge node [right] {2} (11)
        (9) edge node [right] {5} (11)
        (9) edge node [below] {1} (12)
        ;  
\end{tikzpicture}\]
\[\begin{tikzpicture}[scale=3, every loop/.style={}]

    \vertex[fill] (1) at (0, 1)  [label=above: $v_1$] {}; 
 \vertex [fill](2) at (.5,1.5) [label=left: $v_2$] {};
\vertex[fill] (3) at (1,.3) [label=left: $v_3$]{};
  \vertex[fill] (4) at (.7,0) [label=right: $v_4$][label=below: \vspace{1cm}\mbox{Figure $4$}] {};   
    \vertex[fill] (5) at (1, 1) [label=right: $v_5$ \hspace{.7cm}\mbox{$\textbf{+}$}]  {};  
    \vertex [fill](6) at (1.3,.5) [label=left: $v_6$][label=below: 4] {};
   \vertex[fill] (7) at (2, 1)  [label=above: $v_1$] {}; 
 \vertex [fill](8) at (2.5,1.5) [label=left: $v_2$] [label=above: 1]{};
\vertex[fill] (9) at (3,.3) [label=left: $v_3$]{};
  \vertex[fill] (10) at (2.7,0) [label=right: $v_4$][label=below: \vspace{1cm}\mbox{Figure $3$}] {};   
    \vertex[fill] (11) at (3, 1) [label=above: $v_5$][label=right:3 \hspace{.5cm}\mbox{$\textbf{=}$}]  {};  
    \vertex [fill](12) at (3.3,.5) [label=left: $v_6$][label=below: 10] {}; 
    \draw (1.3,.5)  to[in=-50,out=-130,loop] (1.5,1); 
 \draw (3.3,.5)  to[in=-50,out=-130,loop] (5,1);
 %\draw (2.7,0)  to[in=-50,out=-130,loop] (5,1);
 \draw (3,1)  to[in=30,out=-70,loop] (3,1);
 \draw (2.5,1.5)  to[in=130,out=10,loop] (4,1);
    \path

        (1) edge node [below] {3} (4)
        (1) edge node [below] {2} (2)
        (2) edge node [right] {2} (4)
        (2) edge node [above] {1} (5)
        (5) edge node [right] {4} (3)
        (3) edge node [below] {1} (6)
        (7) edge node [below] {2} (8)
        (7) edge node [below] {5} (9)
        (7) edge node [below] {3} (10)
        (8) edge node [right] {6} (10)
        (8) edge node [right] {1} (11)
        (9) edge node [right] {5} (11)
        (10) edge node [above] {1} (11)
        ;  
\end{tikzpicture}\]
\vspace{-1mm}

\vspace{-1mm}
\[\begin{tikzpicture}[scale=3, every loop/.style={}]

    \vertex[fill] (1) at (0, 1)  [label=above: $v_1$] {}; 
 \vertex [fill](2) at (.5,1.5) [label=left: $v_2$][label=above: 1] {};
\vertex[fill] (3) at (1,.3) [label=left: $v_3$]{};
  \vertex[fill] (4) at (.7,0) [label=right: $v_4$][label=below: \vspace{1cm}\mbox{Figure $7$}] {};   
    \vertex[fill] (5) at (1, 1) [label=right: $v_5$ \hspace{.7cm}\mbox{$\textbf{+}$}]  {};  
    \vertex [fill](6) at (1.3,.5) [label=left: $v_6$][label=below: 5] {};
   \vertex[fill] (7) at (2, 1)  [label=above: $v_1$] {}; 
 \vertex [fill](8) at (2.5,1.5) [label=left: $v_2$] {};
\vertex[fill] (9) at (3,.3) [label=left: $v_3$]{};
  \vertex[fill] (10) at (2.7,0) [label=right: $v_4$][label=below: \vspace{1cm}\mbox{Figure $8$}] {};   
    \vertex[fill] (11) at (3, 1) [label=above: $v_5$][label=right:3]  {};  
    \vertex [fill](12) at (3.3,.5) [label=left: $v_6$][label=below: 9] {}; 
    \draw (1.3,.5)  to[in=-50,out=-130,loop] (1.5,1); 
   % \draw (.7,0)  to[in=-50,out=-130,loop] (.7,0);
 \draw (3.3,.5)  to[in=-50,out=-130,loop] (5,1);
 %\draw (2.7,0)  to[in=-50,out=-130,loop] (5,1);
 \draw (3,1)  to[in=30,out=-70,loop] (3,1);
 \draw (.5,1.5)  to[in=130,out=10,loop] (4,1);
    \path

        (1) edge node [below] {3} (4)
        (1) edge node [below] {2} (2)
        (2) edge node [right] {2} (4)
        %(2) edge node [above] {2} (5)
        (5) edge node [right] {5} (3)
       % (1) edge node [below] {1} (3)
       % (6) edge node [below] {1} (3)
        (7) edge node [below] {2} (8)
        (7) edge node [below] {5} (9)
        (7) edge node [below] {3} (10)
        (8) edge node [right] {6} (10)
        (8) edge node [right] {2} (11)
        (9) edge node [right] {4} (11)
        (9) edge node [below] {1} (12)
        (10) edge node [above] {1} (11)
        ;  
\end{tikzpicture}\]

(a) Interchange the pair of edges $\{(v_1,v_4), (v_3,v_6)\}$ in Figure 4  with the set $\{(v_1,v_3), (v_4,v_4),\\ (v_6,v_6)\}$ in Figure $2$, and obtain the graph in Figure 5 with which the associated monomial $p_{12}^2p_{13}p_{14}^2p_{24}^2p_{25}p_{35}^4p_4p_{6}^5$ lies in $R_1$ and the graph in Figure 6 with which the associated monomial lies in $R_2$.

(b) Interchange the pair of edges $\{(v_2,v_5), (v_3,v_6)\}$ in Figure 4 with the set $\{(v_3,v_5), (v_2,v_2),\\ (v_6,v_6)\}$ in Figure 3 to obtain the graph in Figure 7 with which the associated monomial $p_{12}^2p_{14}^3p_{24}^2p_{35}^5p_{2}p_{6}^5$ lies in $R_1$ and the graph in Figure 8 with which the associated monomial lies in $R_2$.
 
\begin{corollary}
 The GIT quotient of a Schubert variety and a Richardson variety in $SL_n/(P_{\alpha_1}\cap P_{\alpha_2})$ by a maximal torus $T$ is projectively normal with respect to the descent of the line bundle $\mathcal L_{n(r_1\varpi_1+r_2\varpi_2)}$.
 \end{corollary}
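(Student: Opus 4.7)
The plan is to mimic the proof of \cref{cor1} verbatim, replacing the Grassmannian $G_{2,n}$ by $SL_n/(P_{\alpha_1}\cap P_{\alpha_2})$ and the line bundle $\mathcal{L}_{n\varpi_2}$ by $\mathcal{L}_{n(r_1\varpi_1+r_2\varpi_2)}$. The main engine is \cref{Theo3}, which says that $\oplus_{k\geq 0} H^0(G/P,\mathcal{L}_{n(r_1\varpi_1+r_2\varpi_2)}^{\otimes k})^T$ is generated in degree one; all we need are two surjectivity statements to transfer this property first to Schubert varieties and then to Richardson varieties.

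First, let $X_w$ be a Schubert variety in $SL_n/(P_{\alpha_1}\cap P_{\alpha_2})$ corresponding to $w$ in the appropriate set of minimal coset representatives. The restriction map
\[
H^0\bigl(SL_n/(P_{\alpha_1}\cap P_{\alpha_2}),\,\mathcal{L}_{n(r_1\varpi_1+r_2\varpi_2)}^{\otimes k}\bigr)\;\longrightarrow\;H^0\bigl(X_w,\,\mathcal{L}_{n(r_1\varpi_1+r_2\varpi_2)}^{\otimes k}\bigr)
\]
is surjective (this is the standard surjectivity of restriction to Schubert varieties in the BWB/Frobenius splitting setting). Since $T$ is linearly reductive, taking $T$-invariants preserves surjectivity, so the induced map on $T$-invariants is also surjective. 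Combined with \cref{Theo3}, this forces $\oplus_{k\geq 0} H^0(X_w,\mathcal{L}_{n(r_1\varpi_1+r_2\varpi_2)}^{\otimes k})^T$ to be generated as a $\mathbb{C}$-algebra by its degree one piece. Because $(X_w)^{ss}_T(\mathcal{L}_{n(r_1\varpi_1+r_2\varpi_2)})$ is normal (Schubert varieties are normal and the semistable locus is open in a normal variety), the homogeneous coordinate ring of $T\backslash\backslash (X_w)^{ss}_T(\mathcal{L}_{n(r_1\varpi_1+r_2\varpi_2)})$ is integrally closed; together with generation in degree one this gives projective normality.

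For a Richardson variety $X_w^v\subseteq SL_n/(P_{\alpha_1}\cap P_{\alpha_2})$ with $v\leq w$, I would invoke \cite[Proposition~1]{BL}, which states that the restriction
\[
H^0\bigl(X_w,\,\mathcal{L}_{n(r_1\varpi_1+r_2\varpi_2)}^{\otimes k}\bigr)\;\longrightarrow\;H^0\bigl(X_w^v,\,\mathcal{L}_{n(r_1\varpi_1+r_2\varpi_2)}^{\otimes k}\bigr)
\]
is surjective. Again, linear reductivity of $T$ makes the restriction on $T$-invariants surjective, so the previous step hands us generation of $\oplus_{k\geq 0} H^0(X_w^v,\mathcal{L}_{n(r_1\varpi_1+r_2\varpi_2)}^{\otimes k})^T$ by its degree one piece. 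Since $(X_w^v)^{ss}_T(\mathcal{L}_{n(r_1\varpi_1+r_2\varpi_2)})$ is again normal, the quotient $T\backslash\backslash (X_w^v)^{ss}_T(\mathcal{L}_{n(r_1\varpi_1+r_2\varpi_2)})$ is projectively normal.

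There is no real obstacle here beyond citing the two surjectivity statements and appealing to \cref{Theo3}; the argument is a two-step transfer (ambient $\Rightarrow$ Schubert $\Rightarrow$ Richardson) that is parallel to \cref{cor1}. The only point requiring minor care is noting that $n(r_1\varpi_1+r_2\varpi_2)\in Q$, so that \cite[Theorem~3.10]{KS} guarantees the descent of the line bundle in each of the three settings (ambient, Schubert, Richardson), which is implicit in saying that the Proj of the invariant ring equals the polarized GIT quotient.
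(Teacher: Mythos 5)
Your proposal is correct and follows essentially the same route as the paper, which simply states that the proof is identical to that of \cref{cor1}: surjectivity of restriction from the ambient variety to the Schubert variety and then (via \cite[Proposition~1]{BL}) to the Richardson variety, combined with linear reductivity of $T$, transfers the degree-one generation from \cref{Theo3}, and normality of the semistable loci completes the argument.
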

 
 \begin{proof}
 The proof is same as the proof of \cref{cor1}.
 \end{proof}

\section{$Spin_5$ and $Spin_7$}\label{secb3}

In this section for $G = Spin_5$ and $Spin_7$ and for a maximal parabolic subgroup $P$ we study projective normality of the quotient $T\backslash\backslash(G/P)$ with respect to the descent of a suitable line bundle on $G/P$.
 
 \textbf{Notation:}
We denote a Young tableau $\Gamma$ with rows $\textrm{Row}_1, \textrm{Row}_2, \ldots, \textrm{Row}_n$ by $\Gamma=(\textrm{Row}_1, \textrm{Row}_2, \ldots, \textrm{Row}_n)$.

\subsection{$Spin_5$}
Let $G=Spin_5$. Let $\varpi_1$ and $\varpi_2$ be the fundamental weights associated to the simple roots $\alpha_1$ and $\alpha_2$ respectively. Since $2\varpi_1 \in 2Q$ and $2\varpi_2 \in \mathbb{Z}\alpha_1+\mathbb{Z}2\alpha_2$, by \cref{shrawan}, the line bundles $\mathcal{L}_{2\varpi_1}$ and $\mathcal{L}_{2\varpi_2}$ descend to the quotients $T\backslash\backslash(G/P_{\alpha_1})^{ss}_T(\mathcal
{L}_{2\varpi_1})$ and $T\backslash\backslash(G/P_{\alpha_2})^{ss}_T(\mathcal
{L}_{2\varpi_2})$ respectively. We have,\\
\centerline{ $T\backslash\backslash(G/P_{\alpha_1})^{ss}_T(\mathcal{L}_{2\varpi_1}) \cong Proj(\oplus H^0(G/P_{\alpha_1}, \mathcal{L}_{2\varpi_1}^{\otimes k})^T)\cong Proj(\oplus_{k \in \mathbb{Z}_{\geq 0}}R_k)$,}\\
 where $R_k:=H^0(G/P_{\alpha_1}, \mathcal{L}_{2\varpi_1}^{\otimes k})^T$. By Theorem 3.2.4. the standard monomials $p_{\Gamma}$ form a basis of $R_k$, where $\Gamma$ is a standard Young tableau associated to the weight $2k\varpi_1$. The standard monomials in $R_k$ are of the form  $p_{\Gamma}$, where\\
 \centerline{ $\Gamma=(\underbrace{(1), \ldots, (1)}_{2q},\underbrace{(2), \ldots, (2)}_{2k-2q},\underbrace{(3), \ldots, (3)}_{2k-2q},\underbrace{(4), \ldots, (4)}_{2q})$,}\\ $0 \leq q \leq k$. So, the homogeneous coordinate ring of the quotient $T\backslash\backslash(G/P_{\alpha_1})_T^{ss}(\mathcal L_{2 \varpi_1})$ is generated by $p_{\Gamma_1}$ and $p_{\Gamma_2}$, where $\Gamma_1=((1),(1),(4),(4))$ and $\Gamma_2=((2),(2),(3),(3))$ as an algebra. Since $T\backslash\backslash(G/P_{\alpha_1})^{ss}_T(\mathcal
{L}_{2\varpi_1})$ is normal, it is projectively normal. In fact, in this case $T\backslash\backslash(G/P_{\alpha_1})^{ss}_T(\mathcal
{L}_{2\varpi_1}) \cong \mathbb{P}^1$. \\

For the quotient $T\backslash\backslash(G/P_{\alpha_2})^{ss}_T(\mathcal{L}_{2\varpi_2})$, the standard monomials in $R_k$ are of the form  $p_{\Gamma}$, $\Gamma=(\underbrace{(1,2), \ldots, (1,2)}_{q},\underbrace{(1,3), \ldots, (1,3)}_{k-q},\underbrace{(2,4), \ldots, (2,4)}_{k-q},\underbrace{(3,4), \ldots, (3,4)}_{q})$, $0 \leq q \leq k$. So, the homogeneous coordinate ring of the quotient $T\backslash\backslash(G/P_{\alpha_1})$ is generated by $p_{\Gamma_1}$ and $p_{\Gamma_2}$, where $\Gamma_1=((1,2),(3,4))$ and $\Gamma_2=((1,3),(2,4))$ as an algebra. Since the quotient $T\backslash\backslash(G/P_{\alpha_2})$ is normal,  it is projectively normal. In this case also $T\backslash\backslash(G/P_{\alpha_2})^{ss}_T(\mathcal
{L}_{2\varpi_2}) \cong \mathbb{P}^1$.\\

\subsection{$Spin_7$}
Let $G=Spin_7$ and $P_{\alpha_i}$ be the maximal parabolic subgroup subgroup associated to $\alpha_i$, $1 \leq i \leq 3$. In this case the line bundle $\mathcal{L}_{2\varpi_i}$ descends to the quotient $T\backslash\backslash(G/P_{\alpha_i})^{ss}_T(\mathcal
{L}_{2\varpi_i})$ for $1 \leq i \leq 2$ whereas $\mathcal{L}_{4\varpi_3}$ descends to the quotient $T\backslash\backslash(G/P_{\alpha_3})^{ss}_T(\mathcal
{L}_{4\varpi_3})$. 

We show that $T\backslash\backslash(G/P_{\alpha_1})^{ss}_T(\mathcal{L}_{2\varpi_1})$ and $T\backslash\backslash(G/P_{\alpha_3})^{ss}_T(\mathcal{L}_{4\varpi_3})$ are projectively normal with respect to the descent of the line bundles $\mathcal{L}_{2\varpi_1}$ and $\mathcal{L}_{4\varpi_3}$ respectively whereas we give a degree bound of the generators of the homogeneous coordinate ring of $T\backslash\backslash(G/P_{\alpha_2})^{ss}_T(\mathcal
{L}_{2\varpi_2})$.

We have $T\backslash\backslash(G/P_{\alpha_1})^{ss}_T(\mathcal{L}_{2\varpi_1}) \cong Proj(\oplus_{k \in \mathbb{Z}_{\geq 0}}R_k)$,
 where $R_k:=H^0(G/P_{\alpha_1}, \mathcal{L}_{2\varpi_1}^{\otimes k})^T$. The standard monomials $p_{\Gamma}$ form a basis of $R_k$, where $\Gamma$ is a standard Young tableau associated to the weight $2k\varpi_1$. The standard monomials in $R_k$ are of the form  $p_{\Gamma}$, where \\ \centerline{$\Gamma=(\underbrace{(1), \ldots, (1)}_{k_1},\underbrace{(2), \ldots, (2)}_{k_2},\underbrace{(3), \ldots, (3)}_{2k-k_1-k_2},\underbrace{(4), \ldots, (4)}_{2k-k_1-k_2},\underbrace{(5), \ldots, (5)}_{k_2},\underbrace{(6), \ldots, (6)}_{k_1})$,}\\ where $0 \leq k_1+k_2 \leq 2k$. So the homogeneous coordinate ring of the GIT quotient $T\backslash\backslash(G/P_{\alpha_1})^{ss}_T(\mathcal{L}_{2\varpi_1}) $
is generated by $p_{\Gamma_1}, p_{\Gamma_2}$ and $p_{\Gamma_3}$ as an algebra, where\\
 \centerline{$\Gamma_1=((1),(1),(6),(6))$, $\Gamma_2=((2),(2),(5),(5))$ and $\Gamma_3=((3),(3),(4),(4))$.}\\
  Since the quotient $T\backslash\backslash(G/P_{\alpha_1})^{ss}_T(\mathcal{L}_{2\varpi_1})$ is normal so it is projectively normal. In fact, in this case $T\backslash\backslash(G/P_{\alpha_1})^{ss}_T(\mathcal
{L}_{2\varpi_1}) \cong \mathbb{P}^2$. 

In the following theorem we give a degree bound of the generators of the homogeneous coordinate ring of the quotient $T\backslash\backslash(G/P_{\alpha_2})^{ss}_T(\mathcal
{L}_{2\varpi_2})$.  

\begin{theorem}\label{Theo4}
	The homogeneous co-ordinate ring of the quotient $T\backslash\backslash(G/P_{\alpha_2})^{ss}_T(\mathcal
{L}_{2\varpi_2})$ is generated by elements of degree at most $3$.
              \end{theorem}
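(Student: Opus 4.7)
The plan is to mirror the approach of \cref{Theo2}: use standard monomial theory to describe $R_k := H^0(G/P_{\alpha_2}, \mathcal{L}_{2\varpi_2}^{\otimes k})^T$ and then perform an inductive case analysis on $T$-invariant standard Young tableaux. Since $R := \bigoplus_{k \ge 0} R_k$ is normal, it suffices to show that $R$ is generated as a $\mathbb{C}$-algebra by $R_1 \cup R_2 \cup R_3$. By the standard monomial basis theorem of \cref{smt} together with \cref{zeroweight} (applied with $n=3$), a basis of $R_k$ consists of the standard $T$-invariant tableaux $\Gamma$ of shape $(4k, 4k)$ (that is, $4k$ rows of length $2$) with entries in $\{1,\ldots,6\}$, rows strictly increasing, columns non-decreasing, each pair $(r_{2i-1}, r_{2i})$ admissible, and $c_\Gamma(t) = c_\Gamma(7-t)$ for $t=1,2,3$. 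As a first step I would enumerate the twelve allowed rows --- the pairs $(a,b)$ with $1 \le a < b \le 6$ and $a+b \ne 7$ --- and tabulate all non-trivial admissible pairs by tracing chains of the operators $s_1, s_2, s_3$ from \cref{young}; one finds exactly nine such pairs, namely $((1,3),(1,4))$, $((2,3),(2,4))$, $((3,5),(4,5))$, $((3,6),(4,6))$, $((1,5),(2,6))$, $((2,4),(3,5))$, $((2,3),(3,5))$, $((2,4),(4,5))$, $((2,3),(4,5))$, and knowing them explicitly is crucial for what follows.

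Next I would induct on $k$, with the base cases $k \le 3$ vacuous. For $k \ge 4$, given any $T$-invariant standard tableau $\Gamma$ of degree $k$, the goal is to produce a sub-tableau $\Gamma_1$ consisting of $2d$ complete row-pairs of $\Gamma$ with $d \in \{1,2,3\}$, such that $\Gamma_1$ is itself a standard $T$-invariant tableau and the complement $\Gamma \setminus \Gamma_1$ (reassembled in the induced row-order) remains standard and $T$-invariant. The cases split on which complementary pairs $\{1,6\}$, $\{2,5\}$, $\{3,4\}$ actually occur in $\Gamma$ and on the content of the first and last row-pairs. In the majority of configurations one extracts a degree-$1$ sub-tableau by pairing a trivial pair $(r,r)$ with a trivial pair $(r^\ast, r^\ast)$ of complementary $T$-weight; when only non-trivial admissible pairs are present one can typically absorb them into a degree-$2$ sub-tableau by combining two non-trivial pairs whose total weight is $T$-balanced; and the handful of configurations where neither extraction leaves a standard $T$-invariant remainder force a degree-$3$ extraction, which one exhibits directly from the list of nine non-trivial pairs.

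The main obstacle is the simultaneous interaction of three rigid constraints --- admissibility on each pair $(r_{2i-1}, r_{2i})$, non-decreasing columns, and $T$-invariance. Removing rows or even whole pairs naively typically breaks at least one of these, so the sub-tableau must be assembled from complete pairs taken from specific positions in $\Gamma$, and the residual pairing after deletion must still be admissible. Unlike the arguments for $SL_n$ in \cref{partialflag}, no graph-theoretic factoring via Petersen's theorem is available here, because the pair structure and the forbidden rows $(1,6)$, $(2,5)$, $(3,4)$ have no clean multigraph analogue; rather, it is the combinatorial rigidity coming from the $B_3$ admissibility pattern that accounts for the appearance of the degree-$3$ exceptions. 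Once the finitely many critical configurations have been identified and handled, the inductive step closes and the stated degree bound follows.
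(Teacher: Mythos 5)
Your framework is the right one and matches the paper's: realize $R_k=H^0(G/P_{\alpha_2},\mathcal{L}_{2\varpi_2}^{\otimes k})^T$ via the standard monomial basis indexed by $T$-invariant standard tableaux of shape $(4k,4k)$, and show by induction that each such tableau splits off a $T$-invariant block of complete admissible row-pairs of degree at most $3$. Your enumeration of the nine non-trivial admissible pairs is correct, and it is worth recording why: the twelve trivial pairs $(r,r)$ carry the long roots $\pm\epsilon_i\pm\epsilon_j$ as weights, while your nine non-trivial pairs carry the six short roots $\pm\epsilon_i$ together with three copies of the zero weight, so the total count is $12+9=21=\dim V_{\varpi_2}$ for $B_3$, as it must be. (The paper's own argument uses only the four pairs $((1,3),(1,4))$, $((1,5),(2,6))$, $((2,3),(2,4))$, $((2,4),(3,5))$ and therefore never confronts $((3,5),(4,5))$, $((3,6),(4,6))$, $((2,4),(4,5))$, $((2,3),(3,5))$, $((2,3),(4,5))$; for instance its assertion that $\textrm{Row}_1=(2,4)$ forces $\textrm{Row}_2\in\{(2,4),(3,5)\}$ omits $\textrm{Row}_2=(4,5)$. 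On this point your setup is more careful than the paper's.)

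The gap is that the argument stops exactly where it has to begin. Everything from ``Next I would induct on $k$'' onward describes the shape of a proof rather than giving one: ``in the majority of configurations one extracts a degree-$1$ sub-tableau,'' ``one can typically absorb them into a degree-$2$ sub-tableau,'' and ``the handful of configurations \ldots{} which one exhibits directly'' are precisely the statements to be proved, and not a single critical configuration is exhibited or shown to be exhaustive. The bound $3$ is not automatic: the tableau consisting of $2j$ copies of each of the trivial pairs $((1,2),(1,2))$, $((3,5),(3,5))$, $((4,6),(4,6))$ is standard and $T$-invariant of degree $3j$, and since the three pair-weights $\epsilon_1+\epsilon_2$, $-\epsilon_2+\epsilon_3$, $-\epsilon_1-\epsilon_3$ admit no zero-sum sub-collection of two or four pairs, the smallest balanced block here genuinely has six pairs, i.e.\ degree $3$. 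One must therefore both locate all such extremal configurations and verify that nothing forces degree $4$ or more; this is the content of the paper's page-long case split on $\textrm{Row}_1$, $(\textrm{Row}_{4k-1},\textrm{Row}_{4k})$ and the multiplicities $N_{t,j}$, which produces an explicit $\Omega$ with $p_{\Omega}\in R_1$, $R_2$ or $R_3$ dividing $p_{\Gamma}$ in every branch. Until you carry out that enumeration --- and carry it out against all nine non-trivial admissible pairs, not four --- the theorem is not proved.
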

              
              \begin{proof} 
              
              We have \[ T\backslash\backslash(G/P_{\alpha_2})_T^{ss}(\mathcal L_{2\varpi_2}) = Proj({\oplus_{k \in \mathbb{Z}_{\geq 0}}}H^0(G/P_{\alpha_2},\mathcal{L}^{\otimes k}_{2\varpi_2})^T)=Proj(\oplus_{k\in \mathbb{Z}_{\geq0}}R_k),\]where $R_k :=H^0(G/P_{\alpha_2},\mathcal{L}^{\otimes k}_{2\varpi_2})^T$. Let $f \in R_k$ be a standard monomial.
              
              We claim that $f = f_1.f_2$ where $f_1$ is in $R_1$ or $R_2$ or $R_3$.
               
                From the discussion in \cref{young}, the Young diagram associated to $f$ has the shape $p = (p_1, p_2) = (4k, 4k)$. So the Young tableau $\Gamma$ associated to this Young diagram has $4k$ rows and $2$ columns with strictly increasing rows and non-decreaing columns. Since $f$ is $T$-invariant, by \cref{zeroweight} we have, \begin{equation}\label{weight1} c_\Gamma(t)=c_\Gamma(7-t) \text{ for all } 1 \leq t \leq 6, 
              \end{equation}
           where $c_{\Gamma}(t) = \#\{t| t \in \Gamma\} $. Also from the discussion in \cref{young}, we have $(\textrm{Row}_{2i-1}, \textrm{Row}_{2i})$ is an admissible pair for all $1 \leq i \leq 2k$, where $\textrm{Row}_{i}$ denotes the $i$-th row of the tableau for all $1 \leq i \leq 4k$. We also have 
          $\mbox{if } t \in \textrm{Row}_{i} \mbox{ then } 7-t \notin \textrm{Row}_{i}, \mbox{ for all } 1 \leq t \leq 6 \mbox{ and for any } i, 1 \leq i \leq 4k$.
         
             Let $\textrm{Col}_{j}$ denotes the $j$-th column where $1 \leq j \leq 2$. Let $E_{i,j}$  be the $(i,j)$-th entry of the tableau $\Gamma$ and $N_{t,j} = \#\{ i| E_{i,j}=t\}$. 
            
       Since $(\textrm{Row}_{2i-1},\textrm{Row}_{2i})$ is admissible, either $\textrm{Row}_{2i-1}=\textrm{Row}_{2i}$ or $(\textrm{Row}_{2i-1},\textrm{Row}_{2i}) \in \{((1,3),(1,4)),((1,5),(2,6)), ((2,3),(2,4)), ((2,4),(3,5))\}$.

We consider $\textrm{Row}_1$. If $E_{1,1}=3$ then $ E_{1,2} \neq 4$. So $E_{1,2} = 5$ or $6$, a contradiction to \cref{weight1}. By a similar reason, $E_{1,1}$ can not be $4, 5$ or $6$. So $\textrm{Row}_1 \in \{(1,2), (1,3), (1,4), \\(1,5), (2,3), (2,4)\}$. 

(a)  Let $\textrm{Row}_1 = (2,4)$. Since $(\textrm{Row}_1, \textrm{Row}_2)$ is admissible so we have $\textrm{Row}_{2}=(2,4)$ or $(3,5)$. \\
            If $\textrm{Row}_2 = (3,5)$ then $5$ or $6$ has to appear in one of the rows below, which is a contradiction to \eqref{weight1}. \\
            If $\textrm{Row}_{2}=(2,4)$ then $(\textrm{Row}_{4k-1},\textrm{Row}_{4k})\in\{((3,5),(3,5)), ((3,5),(4,5)),((4,5),(4,5))\}$.
           
               If $(\textrm{Row}_{4k-1},\textrm{Row}_{4k})  = ((4,5),(4,5))$ then $c_{\Gamma}(4)+c_{\Gamma}(5)\geq 4k+2$ and hence $c_{\Gamma}(2)+c_{\Gamma}(3)\leq 4k-2$, a contradiction. By a similar reason $(\textrm{Row}_{4k-1},\textrm{Row}_{4k})$ can not be $((3,5),(4,5))$. If $(\textrm{Row}_{4k-1},\textrm{Row}_{4k}) = ((3,5), (3,5))$ then $p_{\Omega} \in  R_1$, where $\Omega=((2,4),(2,4),\\(3,5),(3,5))$ and is a factor of $f$.
              
            (b)  If $\textrm{Row}_{1}= (2,3)$ then $\textrm{Row}_{2}$ is either $(2,3)$ or $(2,4)$.\\
               If $\textrm{Row}_2=(2,3)$ then by a similar argument as above, $(\textrm{Row}_{4k-1},\textrm{Row}_{4k})$ has to be $((3,5),\\(4,5))$. Then $p_{\Omega} \in R_1$, where $\Omega=((2,3),(2,3),(4,5),(4,5))$ and is a factor of $f$.
               
               Similarly if $\textrm{Row}_2=(2,4)$  
then we have $p_{\Omega} \in R_1$, where $\Omega= ((2,3),(2,4),(3,5),(4,5))$ and is a factor of $f$. 
              
        (c) If $\textrm{Row}_1=(1,5)$ then $\textrm{Row}_2=(1,5)$. Then $(\textrm{Row}_{4k-1}, \textrm{Row}_{4k})\in \{((2,6),(2,6)),((3,6),\\(3,6)),((3,6), (4,6)),((4,6),(4,6)),((5,6),(5,6))\}$. By a similar argument as above $(\textrm{Row}_{4k-1},\\ \textrm{Row}_{4k})$ can not be any other pair except $((2,6),(2,6))$. Then $p_{\Omega}\in R_1$, where $\Omega=((1,5),(1,5),(2,6),(2,6))$ and is a factor of $f$.
        
        (d) If $\textrm{Row}_1 = (1,4)$ then  $\textrm{Row}_2=(1,4)$. By a similar argument as above $(\textrm{Row}_{4k-1}, \textrm{Row}_{4k})$ has to be $((3,6),(3,6))$. Then $p_{\Omega}\in R_1$ and is a factor of $f$, where $\Omega=((1,4),(1,4),(3,6),\\(3,6))$. 
        
  (e) If $\textrm{Row}_1=(1,2)$ then  $\textrm{Row}_2=(1,2)$ and $(\textrm{Row}_{4k-1}, \textrm{Row}_{4k}) \in \{((2,6),(2,6)),((3,6),\\(3,6)),((3,6), (4,6)), ((4,6),(4,6)),((5,6),(5,6))\}$. By a similar argument as above \\$(\textrm{Row}_{4k-1}, \textrm{Row}_{4k})$ is either $((5,6),(5,6))$ or $((4,6),(4,6))$.\\
     If $(\textrm{Row}_{4k-1}, \textrm{Row}_{4k}) = ((5,6),(5,6))$ then $p_{\Omega} \in R_1$ and is a factor of $f$, where $\Omega=((1,2),(1,2),(5,6),(5,6))$.\\
      Now assume $(\textrm{Row}_{4k-1}, \textrm{Row}_{4k}) = ((4,6),(4,6))$. Let $N_{1,1}=N_{6,2} = m_1$. By the admissibility property $m_1$ is even.
      
      \textbf{Case 1:} Let $m_1=2$. Note that $\textrm{Row}_i$ has either $2$ or $5$ as an entry for all $3 \leq i \leq 4k-2$. Since $E_{1,2}=E_{2,2} = 2$ we have $c_{\Gamma}(2)=c_{\Gamma}(5)= 2k-1$. Similarly, we have $c_{\Gamma}(3)=c_{\Gamma}(4)= 2k-1$. Since $E_{4k,1}=4$ we have $E_{i,2}=5$, for all $2k \leq i \leq 4k-2$. Also since $c_{\Gamma}(4)=2k-1$ we have $\textrm{Row}_{2k} = \textrm{Row}_{2k+1} = (3,5)$. Since the pairs $(\textrm{Row}_{2k-1}, \textrm{Row}_{2k})$ and $(\textrm{Row}_{2k+1}, \textrm{Row}_{2k+2})$ are admissible we have $\textrm{Row}_{2k-1}=(2,4)$ and $\textrm{Row}_{2k+2}=(3,5)$. Then  $p_{\Omega} \in R_2$ and is a factor of $f$, where $\Omega=((1,2),(1,2),(2,4),(3,5),(3,5),(3,5),(4,6),(4,6))$.
      
      \textbf{Case 2:} Let $m_1=2k$. We have $E_{i,1}=1$ for all $1 \leq i \leq 2k$ and $E_{i,2}=6$ for all $2k+1 \leq i \leq 4k$. Note that $E_{2k-1,2}=E_{2k,2}=5$. Then $E_{i,1}=E_{i+1,1}=3$, for some $i$, $2k+1 \leq i \leq 4k$. Then $p_{\Omega} \in R_2$ and is a factor of $f$, where $\Omega=((1,2),(1,2),(1,5),(1,5),(3,6),(3,6),(4,6),(4,6)) $. 
      
      \textbf{Case 3:} 	Let $4 \leq m_1 \leq 2k-2$. Note that  $k \geq 3$. 
 
Since $E_{1,2}=E_{2,2}=2$ and $\textrm{Row}_i$ contains either $2$ or $5$ as an entry for $m_1+1 \leq i \leq 4k-m_1$ we have $N_{5,2} \geq 2k-m_1+1$. Hence, $\textrm{Row}_{2k}=\textrm{Row}_{2k+1} = (3,5)$. Let $N_{2,1}=l$. 
      
     (i) If $l=0$ then  $E_{i,2}=5$ for all $m_1+1 \leq i \leq 4k-m_1$ and hence $N_{2,2} \geq 4$. So we have $\textrm{Row}_{3}=\textrm{Row}_{4}=(1,2)$. Since $c_{\Gamma}(4) \leq 2k-1$ we have $\textrm{Row}_{2k-1}=(3,5)$.\\
     Since $(\textrm{Row}_{2k+1},\textrm{Row}_{2k+2})$ is admissible we have $\textrm{Row}_{2k+2}$ is either $(3,5)$ or $(4,5)$. If $\textrm{Row}_{2k+2}=(4,5)$ then $c_{\Gamma}(4)=2k-1$ whereas  $c_{\Gamma}(3) \leq (2k-4)+1=2k-3$, a contradiction. Hence $\textrm{Row}_{2k+2}=(3,5)$.\\
     We claim that $N_{4,1} \geq 4$. If not then $N_{4,1} \leq 3$. In this case $c_{\Gamma}(3) \geq 2k-3+2=2k-1$ whereas $c_{\Gamma}(4) \leq 2k-6+3=2k-3$, a contradiction. Hence, $N_{4,1} \geq 4$.  So we have $\textrm{Row}_{4k-3}=\textrm{Row}_{4k-3}=(4,6)$.
    Then $p_{\Omega}\in R_3$ and is a factor of $f$, where\\
     \centerline{ $\Omega=((1,2),(1,2),(1,2),(1,2),(3,5),(3,5),(3,5),(3,5),(4,6),(4,6),(4,6),(4,6))$.}  
     
     (ii) Let $l=1$. Then $\textrm{Row}_{m_1+1}=(2,4)$. Since $(\textrm{Row}_{m_1+1},\textrm{Row}_{m_1+2})$ is admissible and $c_{\Gamma}(4) \leq 2k-1$ we have
      $\textrm{Row}_{m_1+2}=(3,5)$. Again since $c_{\Gamma}(4) \leq 2k-1$ we have $\textrm{Row}_{2k+1} = \textrm{Row}_{2k+2}= (3,5)$. Then $p_{\Omega} \in R_2$ and is a factor of $f$, where\\ \centerline{$\Omega=((1,2),(1,2),(2,4),(3,5),(3,5),(3,5),(4,6),(4,6))$.}
      
    (iii)  Let $l \geq 2$.
    
    Then the rows of $\Gamma$ containing $2$ as the first entry are either $(2,3)$ or $(2,4)$.\\
    If $\Gamma$ has at least two rows equal to $(2,4)$ then $p_{\Omega} \in R_1$ and is a factor of $f$, where $\Omega=((2,4),(2,4),(3,5),(3,5))$. \\
    If $\Gamma$ has exactly one row equal to $(2,4)$ then since $c_{\Gamma}(4)\leq 2k-1$ we have $\textrm{Row}_{2k+2}=(3,5)$. So $p_{\Omega} \in R_2$ and is a factor of $f$, where $\Omega=((1,2),(1,2),(2,4),(3,5),(3,5),(3,5),(4,6),(4,6))$.\\
     If none of the rows of $\Gamma$ is $(2,4)$ then since $\textrm{Row}_{2k}=(3,5)$ and $(\textrm{Row}_{2k-1},\textrm{Row}_{2k})$ is admissible we have $\textrm{Row}_{2k-1}=(3,5)$ and hence, $N_{5,2}\geq 2k-m_1+2$. Then $N_{2,1} \leq 2k-m_1-2$ and so $N_{2,2} \geq 4$. Hence $\textrm{Row}_{3}=\textrm{Row}_4=(1,2)$. We claim that $\textrm{Row}_{4k-3}=\textrm{Row}_{4k-2}=(4,6)$. If not then $c_{\Gamma}(4)\leq 3$ whereas $c_{\Gamma}(3)\geq 2k-3+2=2k-1$, a contradiction to $k \geq 3$. Since $(\textrm{Row}_{4k-m_1-1},\textrm{Row}_{4k-m_1})$ is admissible we have $(\textrm{Row}_{4k-m_1-1},\textrm{Row}_{4k-m_1}) \in \{((3,5),(3,5)),((3,5),(4,5)),((4,5),(4,5))\}$.\\
     If $(\textrm{Row}_{4k-m_1-1},\textrm{Row}_{4k-m_1})=((4,5),(4,5))$ then $p_{\Omega} \in R_1$ and is a factor of $f$, where $\Omega=((2,3),(2,3),(4,5),(4,5))$.\\
     If $(\textrm{Row}_{4k-m_1-1},\textrm{Row}_{4k-m_1})=((3,5),(3,5))$ then $\textrm{Row}_{2k+2}=(3,5)$ and in this case $p_{\Omega} \in R_3$ and is a factor of $f$, where\\
\centerline{ $\Omega=((1,2),(1,2),(1,2),(1,2),(3,5),(3,5),(3,5),(3,5),(4,6),(4,6),(4,6),(4,6))$.} \\
If $(\textrm{Row}_{4k-m_1-1},\textrm{Row}_{4k-m_1})=((3,5),(4,5))$ then for $m_1=2k-2$ we have $c_{\Gamma}(4)=2k-1$ whereas $c_{\Gamma}(3) \leq (2k-4)+1=2k-3$, a contradiction and for $m_1 \leq 2k-4$ we have $\textrm{Row}_{2k+2}=(3,5)$, then  $p_{\Omega} \in R_3$ and is a factor of $f$, where\\
\centerline{ $\Omega=((1,2),(1,2),(1,2),(1,2),(3,5),(3,5),(3,5),(3,5),(4,6),(4,6),(4,6),(4,6))$.}

 (f) Let $\textrm{Row}_1=(1,3)$. Then by a similar argument as given in (a), we see that $f$ has a factor $f_1$ such that $f_1$ is in $R_1$ or $R_2$ or $R_3$. 
      
      So by induction we conclude that $f$ is generated by the elements of degree at most $3$.
      
      \end{proof}
      
      \begin{theorem}\label{Theo4}
	The quotient $T\backslash\backslash(G/P_{\alpha_3})^{ss}_T(\mathcal
{L}_{4\varpi_3})$ is projectively normal with respect to the descent of the line bundle $\mathcal{L}_{4\varpi_3}$.
              \end{theorem}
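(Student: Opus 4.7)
The plan mirrors the strategy of the preceding theorem. We write
\[
T\backslash\backslash(G/P_{\alpha_3})_T^{ss}(\mathcal{L}_{4\varpi_3}) = \mathrm{Proj}\Big(\bigoplus_{k\ge 0} R_k\Big),\quad R_k := H^0\big(G/P_{\alpha_3},\ \mathcal{L}_{4\varpi_3}^{\otimes k}\big)^T.
\]
Since $R = \bigoplus_k R_k$ is normal, it suffices to prove that $R$ is generated as a $\mathbb{C}$-algebra by $R_1$. Applying the shape formula of \cref{young} to $\lambda = 4k\varpi_3$ yields $p_1 = p_2 = p_3 = 4k$ and $\bar{p_1}=(p_1-a_n)/2=0$, so a standard monomial in $R_k$ is indexed by a standard Young tableau $\Gamma$ with $4k$ rows of length $3$, entries in $\{1,\dots,6\}$, rows strictly increasing, columns non-decreasing, and no $i$ and $7-i$ in the same row; $T$-invariance amounts to $c_\Gamma(t) = c_\Gamma(7-t)$ by \cref{zeroweight}.

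Enumerating admissible rows produces exactly eight, namely $R_1=(1,2,3)$, $R_2=(1,2,4)$, $R_3=(1,3,5)$, $R_4=(1,4,5)$, $R_5=(2,3,6)$, $R_6=(2,4,6)$, $R_7=(3,5,6)$, $R_8=(4,5,6)$; since $R_4$ and $R_5$ are incomparable in the column-wise order, they cannot both appear in any standard tableau. Writing $n_j$ for the multiplicity of $R_j$ in $\Gamma$, the $T$-invariance of $p_\Gamma$ is equivalent to the single-parameter relation
\[
n_1 - n_8 \;=\; -(n_2 - n_7) \;=\; -(n_3 - n_6) \;=\; n_4 - n_5 \;=:\; a,
\]
together with $\sum_j n_j = 4k$ and $\min(n_4,n_5) = 0$. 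Direct enumeration for $k=1$ yields exactly eight generators of $R_1$: the symmetric $T_j := (R_j, R_j, R_{9-j}, R_{9-j})$ for $j = 1,2,3$; the mixed $(R_1, R_2, R_7, R_8)$, $(R_1, R_3, R_6, R_8)$, $(R_2, R_3, R_6, R_7)$; and the exceptional $T_7 := (R_1, R_4, R_6, R_7)$ and $T_8 := (R_2, R_3, R_5, R_8)$.

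The core inductive claim is: for each $k \ge 2$ and each standard $T$-invariant $\Gamma$ of degree $k$, the multi-set of rows of $\Gamma$ contains one of the eight generators $\Theta$ as a sub-multi-set. Granting this, the complementary rows form a tableau $\Gamma'$ of degree $k-1$ which is still standard (column monotonicity is preserved under sub-sequences), still $T$-invariant (subtracting multiplicities preserves the single-parameter relation, now with $a' \in \{a-1,\, a,\, a+1\}$), and still satisfies $\min(n_4', n_5') = 0$; the Pl\"ucker-monomial identity $p_\Gamma = p_\Theta\, p_{\Gamma'}$ then places $p_\Gamma$ in $R_1 \cdot R_{k-1}$, closing the induction. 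The extraction of $\Theta$ splits on the sign of $a$: when $a = 0$ one has $n_1 + n_2 + n_3 = 2k \ge 4$, so some $n_j \ge 2$ for $j \in \{1,2,3\}$ and the symmetric $T_j$ sits inside $\Gamma$; when $a > 0$ the relations force $n_1, n_4, n_6, n_7 \ge 1$, permitting extraction of $T_7$ (which decreases $a$ by one); and the case $a < 0$ is symmetric via $T_8$.

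The main obstacle is the bookkeeping in this extraction step: one must check, case by case, that every multiplicity in the chosen $\Theta$ is bounded above by the corresponding $n_j$ in $\Gamma$, and that the residual multiplicities in $\Gamma'$ still satisfy the single-parameter $T$-invariance with the updated $a'$, the sum constraint $\sum_j n_j' = 4(k-1)$, and $\min(n_4', n_5') = 0$. All of these reduce to routine verifications from the explicit form of the relations and the complementary structure of the generators, but each must be made explicitly to ensure the induction closes.
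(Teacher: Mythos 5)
Your proposal is correct, and while it follows the paper's overall strategy (pass to the standard monomial basis and show by induction that every standard $T$-invariant tableau of degree $k\geq 2$ contains a degree-one standard $T$-invariant sub-tableau whose Pl\"ucker monomial divides $p_\Gamma$), the mechanism you use to locate that sub-tableau is genuinely different and more structural. The paper never isolates the eight admissible rows as a poset; it runs a positional case analysis on the entries $E_{i,j}$ and the counts $N_{t,j}$, branching on $\textrm{Row}_1\in\{(1,2,3),(1,2,4),(1,3,5)\}$ and then on the entries of $\textrm{Row}_{2k}$, $\textrm{Row}_{2k+1}$ and $\textrm{Row}_{4k}$, producing a suitable factor $\Omega$ in each branch. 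You instead observe that the eight rows form a chain in the column-wise order except for the incomparable pair $(1,4,5)$, $(2,3,6)$, so a standard $T$-invariant monomial is the same datum as a multiplicity vector $(n_1,\dots,n_8)$ with $\min(n_4,n_5)=0$ subject to your relations; I checked that the three conditions $c_\Gamma(t)=c_\Gamma(7-t)$ do reduce to the single parameter $a=n_1-n_8=-(n_2-n_7)=-(n_3-n_6)=n_4-n_5$, that the eight degree-one generators you list are exactly the eight tableaux $\Omega$ occurring in the paper's proof, and that the extraction step (pigeonhole among $n_1,n_2,n_3$ when $a=0$ using $n_1+n_2+n_3=2k\geq 4$, and the tableaux $(R_1,R_4,R_6,R_7)$, $(R_2,R_3,R_5,R_8)$ when $a>0$, $a<0$) removes a valid generator and leaves a standard $T$-invariant tableau of degree $k-1$ with $a'=a$, $a-1$, $a+1$ respectively and $\min(n_4',n_5')=0$ preserved. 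So the induction closes; the remaining "bookkeeping" you flag is exactly these verifications and they all go through. What your route buys is a transparent description of the degree-one piece and of the whole invariant ring via multiplicity vectors, at the cost of nothing here; the paper's entry-wise technique is, however, the one that transfers to the $P_{\alpha_2}$ case treated just before, where the admissibility condition on pairs of rows genuinely constrains the tableaux and only a degree-three bound is obtained.
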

              
              \begin{proof} 
              
              We have \[ T\backslash\backslash(G/P_{\alpha_3})_T^{ss}(\mathcal L_{4\varpi_3}) = Proj({\oplus_{k \in \mathbb{Z}_{\geq 0}}}H^0(G/P_{\alpha_3},\mathcal{L}^{\otimes k}_{4\varpi_3})^T)=Proj(\oplus_{k\in \mathbb{Z}_{\geq0}}R_k),\]where $R_k :=H^0(G/P_{\alpha_3},\mathcal{L}^{\otimes k}_{4\varpi_3})^T$. Since the quotient $T\backslash\backslash(G/P_{\alpha_3})_T^{ss}(\mathcal L_{4\varpi_3})$ is normal, in order to show that it is projectively normal we show that $R_k$ is generated by $R_1$. Let $f \in R_k$ be a standard monomial. The Young diagram associated to $f$ has the shape $p = (p_1, p_2, p_3) = (4k, 4k, 4k)$. So the Young tableau $\Gamma$ associated to this Young diagram has $4k$ rows and $3$ columns with strictly increasing rows and non-decreaing columns. Since $f$ is $T$-invariant, by \cref{zeroweight} we have, \begin{equation}\label{weight} c_\Gamma(t)=c_\Gamma(7-t) \text{ for all } 1 \leq t \leq 6. 
              \end{equation}
            Since $\bar{p_1}=0$ admissibility condition is not valid here. We also have 
          $\mbox{if } t \in \textrm{Row}_{i} \mbox{ then } 7-t \notin \textrm{Row}_{i}, \mbox{ for all } 1 \leq t \leq 6 \mbox{ and for any } i, 1 \leq i \leq 4k$, where $\textrm{Row}_{i}$ denotes the $i$th row of the tableau. For $1 \leq t \leq 6$ all the rows of $\Gamma$ contain either $t$ or $7-t$. So $c_{\Gamma}(t)=2k$ for all $1 \leq t \leq 6$. 
           
         Let $\textrm{Col}_{j}$ denotes the $j$th column of the tableau. Let $E_{i,j}$ be the $(i,j)$-th entry of the tableau and $N_{t,j} = \#\{ i | E_{i,j} = t\}$. 
         
         Note that $E_{i,1} = 1$ for all $1 \leq i \leq 2k$ and $E_{i,3} = 6$ for all $ 2k+1 \leq i \leq 4k$.

 If $E_{1,2}=4$ or $5$ then $1$, $2$ and $3$ appear $2k$ times each in the first column, a contradiction. 
 
  So, $\textrm{Row}_1 \in \{(1,2,3), (1,2,4), (1,3,5)\}$.

\textbf{Case - 1} {$\textrm{Row}_1 = (1,3,5)$} 
 
In this case we have $E_{i,1} = 2$ for all $2k+1 \leq i \leq 4k$, $E_{i,2} = 3$ for all $1 \leq i \leq 2k$, $E_{i,3} = 5$ for all $1 \leq i \leq 2k$ and $E_{i,2} = 4$ for all $2k+1 \leq i \leq 4k$. So we conclude that, $\textrm{Row}_{i} = (2,4,6)$ for all $2k+1 \leq i\leq 4k$ and $\textrm{Row}_{i} = (1,3,5)$ for all $1 \leq i\leq 2k$. Then $p_{\Omega}\in R_1$ and divides $f$, where $\Omega=((1,3,5),(1,3,5),(2,4,6),(2,4,6))$. So by induction we conclude that $f$ belongs to the subalgebra generated by $R_1$. 
 
\textbf{Case - 2} {$\textrm{Row}_1 = (1,2,4)$}

In this case we have $E_{4k,2}=5$ and $E_{4k,1}$ is either $3$ or $4$.

(a) If $E_{4k,1} =3$ then $\textrm{Row}_{4k}=(3,5,6)$. In this case $E_{2k,3}$ is either $4$ or $5$.

 If $E_{2k,3}=4$ then $E_{2k,2}=2$ and hence, $E_{i,2}=2$ for all $1 \leq i \leq 2k$ and $E_{i,3}=4$ for all $1 \leq i \leq 2k$. So we conclude that $E_{i,1}=3$ for all $2k+1 \leq i \leq 4k$ and $E_{i,3}=5$ for all $2k+1 \leq i \leq 4k$. Then $p_{\Omega} \in R_1$ and is a factor of $f$, where $\Omega=((1,2,4),(1,2,4),(3,5,6),(3,5,6))$.

 If $E_{2k,3}=5$ then $E_{2k,2}$ is either $3$ or $4$. If $E_{2k,2}=3$ then $E_{2k+1,1}=2$ and so $E_{2k+1,2}$ is either $3$ or $4$. If $E_{2k+1,2}=4$ then $p_{\Omega} \in R_1$ and is a factor of $f$, where $\Omega=((1,2,4),(1,3,5),(2,4,6),(3,5,6))$. If $E_{2k+1,2}=3$ then $c_{\Gamma}(2)+c_{\Gamma}(3) \geq 4k+1$, a contradiction.

(b) If $E_{4k,1}=4$ then $E_{2k,3} = 5$ and so $E_{2k,2}$ is either $3$ or $4$. If $E_{2k,2}=3$ then $E_{2k+1,1}=2$ and so $E_{2k+1,2}$ is either $3$ or $4$. If $E_{2k+1,2} = 4$ then $c_{\Gamma}(4)+c_{\Gamma}(5) \geq 4k+1$, a contradiction. If $E_{2k+1,2}=3$  then $p_{\Omega} \in R_1$ and is a factor of $f$, where $\Omega=((1,2,4),(1,3,5),(2,3,6),(4,5,6))$. 

\textbf{Case - 3} {$\textrm{Row}_1=(1,2,3)$}

In this case $E_{4k,2}=5$ and $E_{4k,1}$ is either $3$ or $4$.

(a) If $E_{4k,1}=4$ then $E_{2k,3}$ is either $3$ or $4$ or $5$. 

 If $E_{2k,3}=3$ then $E_{i,3}=3$ for all $1 \leq i \leq 2k$ and $E_{i,2}=2$ for all $1 \leq i \leq 2k$. Hence $E_{i,1}=4$ for all $2k+1 \leq i \leq 4k$ and $E_{i,2}=5$ for all $2k+1 \leq i \leq 4k$. Then $p_{\Omega}\in R_1$ and is a factor of $f$, where $\Omega=((1,2,3),(1,2,3),(4,5,6),(4,5,6))$. 

 If $E_{2k,3}=4$ then $E_{2k,2}=2$, $E_{i,2}=5$ for all $2k+1 \leq i \leq 4k$ and $\textrm{Row}_{2k+1}=(3,5,6)$. SThen$p_{\Omega} \in R_1$ and is a factor of $f$, where $\Omega=((1,2,3),(1,2,4),(3,5,6),(4,5,6))$.

 If $E_{2k,3}=5$ then $E_{2k,2}$ is either $3$ or $4$.\\
If $E_{2k,2}=3$ then $E_{2k+1,1}=2$ and so $E_{2k+1,2}$ is either $3$ or $4$. 
If $E_{2k+1,2}=4$ then $p_{\Omega} \in R_1$ and is a factor of $f$, where $\Omega=((1,2,3),(1,3,5),(2,4,6),(4,5,6))$. 
If $E_{2k+1,2}=3$ then for all $2 \leq i \leq 2k+1$ we have $E_{i,2}$ either $2$ or $3$. We claim that $\Gamma$ will either have a row $(1,2,4)$ or a row $(2,4,6)$. If not then $2$ and $3$ appear in all the rows of $\Gamma$, which is a contradiction, since $\textrm{Row}_{2k}=(1,3,5)$. If $(1,2,4)$ is a row of $\Gamma$ then $p_{\Omega_1}$ is a factor of $f$ and if $(2,4,6)$ is a row then $p_{\Omega_2}$ is a factor of $f$, where $\Omega_1=((1,2,4),(1,3,5),(2,3,6),(4,5,6))$ and $\Omega_2=((1,2,3),(1,3,5),(2,4,6),(4,5,6))$.\\
If $E_{2k,2}=4$ then $E_{2k+1,1}=2$ and $E_{2k+1,2}=4$. So $\textrm{Row}_{2k+1}=(2,4,6)$. Since $E_{2k,3}=5$ we have $N_{3,1}\geq 1$. So if $E_{q,1}=3$ for some $2k+2 \leq q \leq 4k-2$ we have $\textrm{Row}_{q}=(3,5,6)$. Then $p_{\Omega}\in R_1$ and is a factor of $f$, where $\Omega=((1,2,3),(1,4,5),(2,4,6),(3,5,6))$.

(b) If $E_{4k,1}=3$ then $E_{2k+1,1}=2$ and $E_{2k,3}$ is either $4$ or $5$. If $E_{2k,3}=4$ then $E_{i,2}=5$ for all $2k+1 \leq i \leq 4k$. Hence, $c_{\Gamma}(4) < 2k$, a contradiction. If $E_{2k,3}=5$ then $E_{2k,2}$ is either $3$ or $4$. If $E_{2k,2}=3$ then $c_{\Gamma}(2)+c_{\Gamma}(3) \geq 4k+1$, a contradiction. If $E_{2k,2}=4$ and in this case we have $E_{2k+1,2}=4$. Then $p_{\Omega}\in R_1$ and is a factor of $f$, where $\Omega=((1,2,3),(1,4,5),(2,4,6),(3,5,6))$.

So by induction we conclude that $f$ belongs to the subalgebra generated by $R_1$ and hence the quotient is projectively normal. 
 \end{proof}

 \end{document}